\newtheorem{theorem}[subsection]{Theorem}
\newtheorem{proposition}[subsection]{Proposition}
\newtheorem{lemma}[subsection]{Lemma}
\newtheorem{remark}[subsection]{Remark}
\newtheorem{corollary}[subsection]{Corollary}
\theoremstyle{definition}
\newtheorem{definition}[subsection]{Definition}
\newtheorem{example}[subsection]{Example}
\definecolor{light-gray}{gray}{.65}
\newcommand{\R}{\mathbb{R}}
\newcommand{\Z}{\mathbb{Z}}
\newcommand{\Hex}{\operatorname{Poly}}
\newcommand{\Sph}{\operatorname{Sph}}
\newcommand{\cthree}[3]{
	\draw[thick, #3] (#1+1/2, -5) -- (#1+1/2, 5);
	\draw[thick, #3] (#1+#2+1/2, -5) -- (#1+#2+1/2, 5);
	{\ifthenelse{#2>1}{
	\foreach \y in {-5,...,2} {
	\draw[thick, #3] (#1+1/2, \y +1/2) -- (#1+#2+1/2, \y + #2+1/2);
	}
	}{
	\foreach \y in {-5,-4,-3,-2,-1,0,1,2,3} {\draw[thick, #3] (#1+1/2, \y +1/2) -- (#1+#2+1/2, \y + #2+1/2);}}
	};
}
\title{An equivariant surgery classification of $C_p$-surfaces}
\author{Kelly Pohland}
\begin{document}

\maketitle

\begin{abstract}
    Let $p$ be an odd prime, and let $C_p$ denote the cyclic group of order $p$. We use equivariant surgery methods to classify all closed, connected $2$-manifolds with an action of $C_p$. We additionally provide a way to construct representatives of each isomorphism class using a series of equivariant surgery operations. The results in this paper serve as an odd prime analogue to a similar classification proved by Dan Dugger. 
\end{abstract}

\section{Introduction}

Let $p$ be an odd prime, and let $C_p$ denote the cyclic group of order $p$. In this paper, we classify all closed and connected $2$-manifolds with an action of $C_p$ up to equivariant isomorphism. More specifically, we define ways of constructing classes of $C_p$-surfaces using equivariant surgery methods and prove that all $C_p$-surfaces can be constructed in this way. 

Dugger gave a similar classification of $C_2$-surfaces in \cite{Dug19}. In his paper, Dugger gave a complete list of isomorphism classes of $C_2$-surfaces and developed a full set of invariants which determine the isomorphism class of a given surface with involution. We use similar methods to show that all nontrivial, closed, connected $C_p$-surfaces are in one of six families of isomorphism classes of $C_p$-surfaces. Various papers have treated aspects of the classification result in Theorems \ref{orientableclassification} and \ref{nonorientableclassification}, mostly focusing on the orientable case \cite{Asoh, Bro91, BCN, Nielsen, Sch29, Smith}. Previous treatments of this classification problem give particular interest to using invariants to quantify the number of isomorphism types of equivariant surfaces. The new idea presented in this classification and in that of \cite{Dug19} is the construction of isomorphism classes via equivariant surgeries.

By giving a geometric construction of the surfaces, we provide additional information which allows us to use the classification in a new way. One such application is in the computation of $RO(G)$-graded Bredon cohomology, an important algebraic invariant in equivariant homotopy theory. The decomposition into surgery pieces informs the construction of cofiber sequences which give rise to long exact sequences on cohomology. Hazel used Dugger's classification to compute the cohomology of $C_2$-surfaces in this Bredon theory \cite{Haz19a}, and this author performed similar computations in the $p=3$ case using the classification presented in this paper \cite{Pohl}. 

The idea behind our classification result is to show that all $C_p$-surfaces can be described in terms of other simpler $C_p$-surfaces. Some examples of these ``building block'' surfaces are $S^{2,1}$ and $M_1^{\text{free}}$ which can be described as the $2$-sphere and torus (respectively) rotating about the axis passing through each of their centers. Other examples include the non-orientable spaces $N_2^{\text{free}}$ and $N_1[1]$ whose $C_p$-actions are shown in Figure \ref{examples1} in the case $p=5$. The final family of spaces needed for our classification is denoted $\Hex_n$ for $n\geq 1$. We can think of $\Hex_1$ as a $2p$-gon with opposite edges identified and a rotation action of $e^{2\pi i/p}$. Then $\Hex_n$ consists of $n$ copies of $\Hex_1$ glued together in a particular way. These surfaces are described in greater detail in Section \ref{surgintro}, but  we can also see this gluing demonstrated in Figure \ref{hexagonparty} in the case $p=3$. 

\begin{figure}
\begin{center}
\includegraphics[scale=.4]{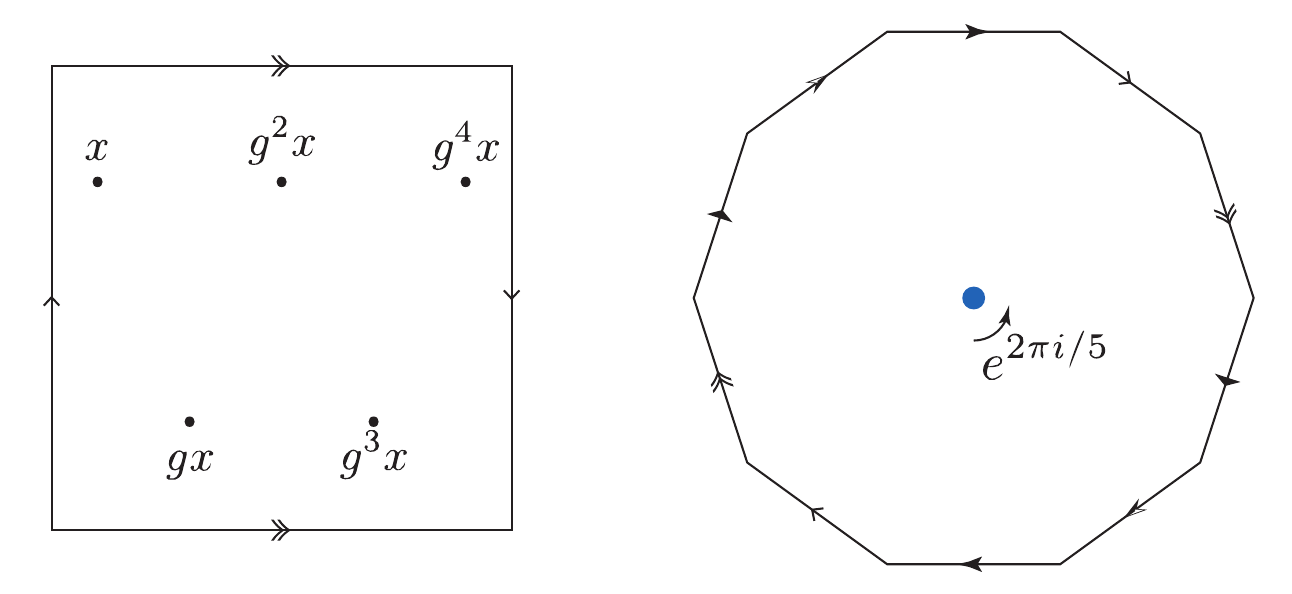}
	\end{center}
\caption{\label{examples1} The spaces $N_2^{\text{free}}$ (left) and $N_1[1]$ (right) in the case $p=5$.}
\end{figure}

\begin{figure}
\begin{center}
\includegraphics[scale=.5]{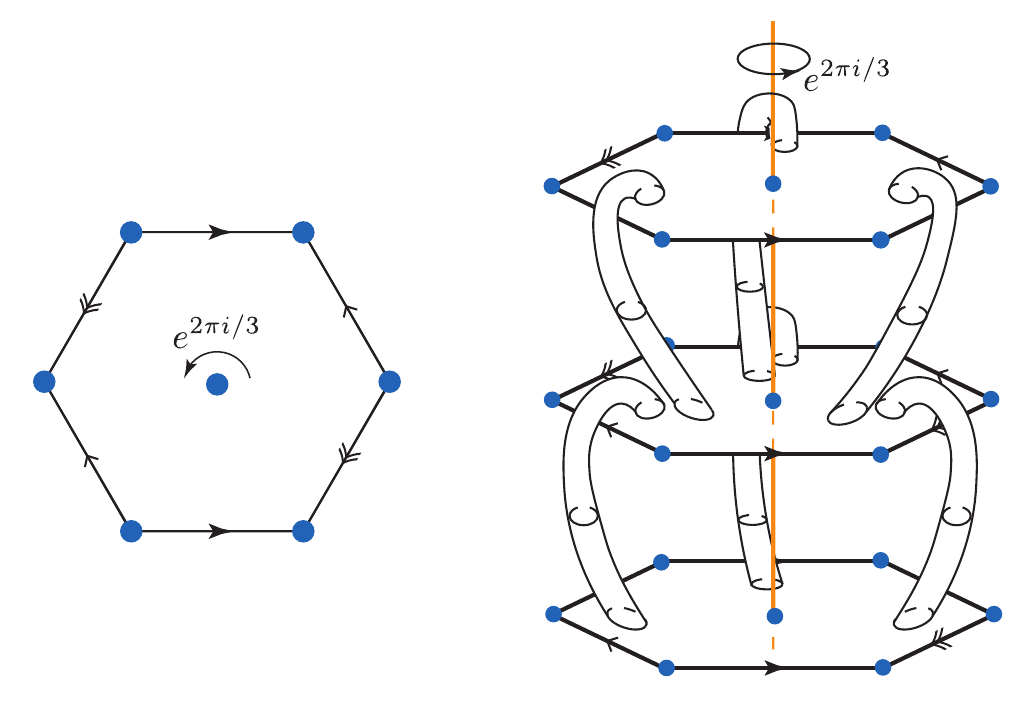}
	\end{center}
\caption{\label{hexagonparty} The spaces $\Hex_1$ (left) and $\Hex_3$ (right) in the case $p=3$.}
\end{figure}

Before precisely stating the classification result, let us introduce some equivariant surgery operations. 

Let $Y$ be a non-equivariant surface and $X$ a non-trivial $C_p$-surface. We construct a new $C_p$-surface by removing $p$ disjoint conjugate disks from $X$ and gluing to each boundary component a copy of $Y\setminus D^2$. The result is a new space on which we can naturally define a $C_p$-action. This is called the equivariant connected sum of $X$ and $Y$ and is denoted $X\#_p Y$. An example of this operation is depicted in Figure \ref{cnctsum} in the case $p=3$. A precise definition of $X\#_p Y$ can be found in Section \ref{surgintro}. 

\begin{figure}
    \centering
    \includegraphics[scale=.5]{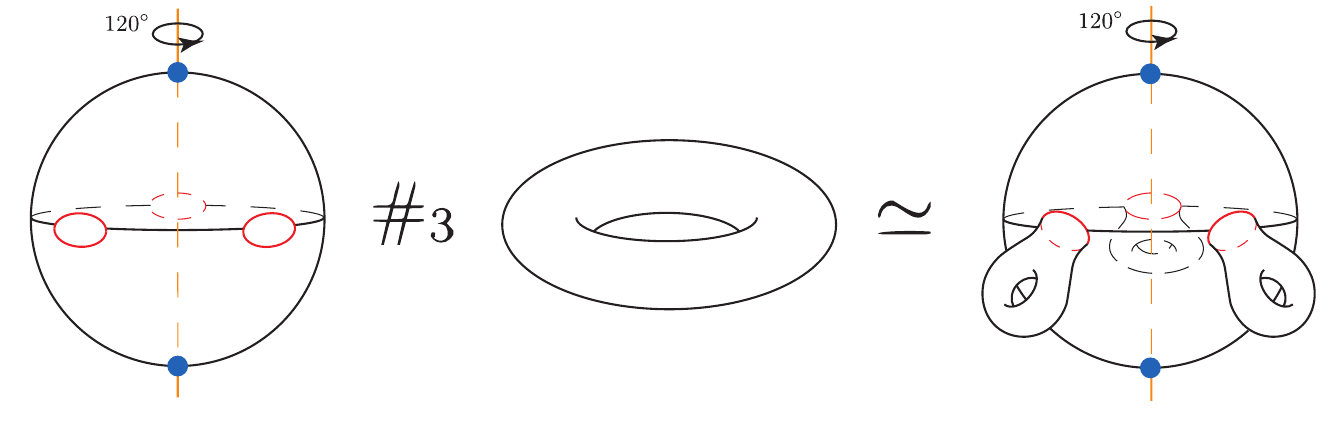}
    \caption{Equivariant connected sum surgery with $X=S^{2,1}$ and $Y$ a torus}
    \label{cnctsum}
\end{figure}

Let $R_p$  denote the space $S^{2,1}$ with $p$ disjoint conjugate disks removed. Let $X$ be any non-trivial $C_p$-surface. After removing $p$ disjoint conjugate disks from $X$, we can construct a new $C_p$-surface $X+[R_p]$ by gluing the $p$ boundary components of $X$ to those of $R_p$ via an equivariant map. Figure \ref{introsurgfig} depicts an example of this surgery operation in the case $p=3$. A precise definition of $X+[R_p]$ can be found in Section \ref{surgintro}. 

\begin{figure}
    \centering
    \includegraphics[scale=.5]{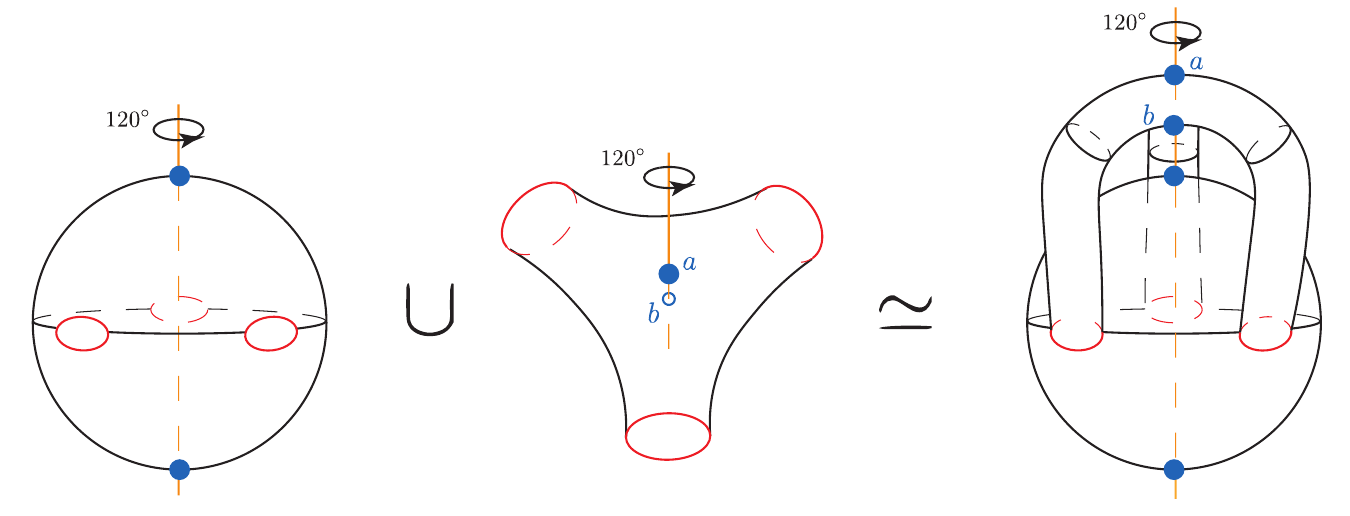}
    \caption{The space $S^{2,1}+[R_3]$}
    \label{introsurgfig}
\end{figure}

In this paper, we prove that up to isomorphism all $C_p$-surfaces can be constructed by starting with $M_1^{\text{free}}$, $S^{2,1}$, $N_2^{\text{free}}$, $N_1[1]$, or $\Hex_n$ (for some $n$) and performing a series of equivariant connected sum and ribbon surgeries. If $X$ is a surface with order $p$ homeomorphism $\sigma_X$ and $Y$ is a surface with order $p$ homeomorphism $\sigma_Y$, we say that $X$ and $Y$ are isomorphic if there exists a homeomorphism $f\colon X\rightarrow Y$ such that $f\circ\sigma_X=\sigma_Y\circ f$. 

Let $M_g$ denote the genus $g$, closed orientable surface.

\begin{theorem}
Let $X$ be a connected, closed, orientable surface with an action of $C_p$. Then $X$ can be constructed via one of the following surgery procedures, up to $\operatorname{Aut}(C_p)$ actions on each of the pieces.
\begin{enumerate}
	\item $M_1^{\text{free}}\#_pM_g$, $g\geq 0$
	\item $\left(S^{2,1}+k[R_p]\right)\#_pM_g$, $k,g\geq 0$
	\item $\left(\Hex_n+k[R_p]\right) \#_pM_g$, $k,g\geq 0$, $n\geq 1$
\end{enumerate}
\end{theorem}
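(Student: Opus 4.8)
The plan is to pass to the quotient, classify by branched-cover data, and then realize each case by the stated surgeries through an inductive stripping argument. Concretely, I would first consider the quotient map $\pi\colon X\to \bar X:=X/C_p$. Since $p$ is an odd prime, a generator $\sigma$ has odd order and hence acts orientation-preservingly on the orientable surface $X$; consequently $\sigma$ has isolated fixed points, $X^{C_p}$ is finite, and $\bar X$ is a closed orientable surface with $\pi$ a branched cover branched exactly over the images of the $F:=|X^{C_p}|$ fixed points. Let $g_X$ and $\bar g$ be the genera of $X$ and $\bar X$. At each fixed point the local action is a rotation, giving a rotation number $c_i\in(\Z/p)^{\times}$, well defined once a generator is fixed and hence well defined as a multiset up to the simultaneous scaling by $(\Z/p)^{\times}=\operatorname{Aut}(C_p)$. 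The numerics are pinned down by Riemann--Hurwitz, $2-2g_X=p(2-2\bar g)-F(p-1)$, and abelianizing the defining relation of the orbifold fundamental group of $\bar X$ forces $\sum_i c_i\equiv 0\pmod p$; in particular $F\neq 1$.

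Next I would set up the induction that undoes the two surgeries. An embedded $C_p$-orbit of $p$ parallel non-separating curves in the free part can be compressed, which is precisely reversing a $\#_p M_1$; iterating this lowers $\bar g$ and shows it suffices to treat the minimal-quotient-genus case, after which the handles are restored by a closing $\#_p M_g$. When $F=0$ the action is free, so $X\to\bar X$ is a connected regular $p$-fold cover classified by a surjection $\pi_1(\bar X)\twoheadrightarrow C_p$; a change-of-coordinates (mapping class group) argument puts any such surjection into standard form, so for each $\bar g\geq 1$ there is a unique free action up to $\operatorname{Aut}(C_p)$, the minimal one being $M_1^{\text{free}}$. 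Hence every free action is $M_1^{\text{free}}\#_p M_{\bar g-1}$, which is family (1).

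When $F\geq 2$ the remaining data is the rotation multiset modulo scaling by $(\Z/p)^{\times}$. I would split on whether this multiset is symmetric under $c\mapsto -c$, equivalently whether it is a disjoint union of canceling pairs $\{a,-a\}$. In the symmetric case, build $X$ from $S^{2,1}$ (one canceling pair, genus-$0$ quotient) by repeatedly applying $+[R_p]$, each splicing in a copy of $R_p=S^{2,1}\setminus(p\text{ disks})$ and adjoining one more canceling pair; this is family (2). The non-symmetric multisets are produced by the polygonal pieces: $\Hex_n$ carries a distinguished non-symmetric configuration (for $p=3$, $\Hex_1$ is the hexagonal torus with its three weight-one fixed points, and $\bar g=0$ there by Riemann--Hurwitz), and $+k[R_p]$ again adjoins the canceling pairs, giving family (3). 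In both cases a closing $\#_p M_g$ restores the quotient genus stripped off above.

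The main obstacle is the combinatorial and rigidity content hidden in the last paragraph: one must prove that, up to scaling, every admissible multiset (those summing to $0$) is either symmetric or a $\Hex_n$-core plus a symmetric remainder, that distinct $n$ give genuinely distinct irreducible cores so that $\Hex_n$ is forced rather than redundant, and then upgrade the matching of invariants to an actual equivariant homeomorphism. This last step is where I expect the real difficulty: matching $\chi$, $F$, and the rotation weights is not a priori enough, so I would invoke Nielsen-type rigidity for $C_p$-branched covers to conclude that the quotient genus together with the rotation multiset is a complete isomorphism invariant, thereby reducing ``isomorphic'' to ``same invariants.'' Finally, the phrase ``up to $\operatorname{Aut}(C_p)$ on each piece'' must be handled with care: one checks that the freedom in the connected-sum and ribbon gluings absorbs exactly the choice of generator at each piece, and that orientation reversal of the ambient surface acts on the multiset by the scalar $-1$ already contained in $\operatorname{Aut}(C_p)$.
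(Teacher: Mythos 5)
Your route is genuinely different from the paper's: you propose to classify by branched-cover invariants (quotient genus plus the multiset of rotation numbers, with a Nielsen/Ding-type rigidity theorem doing the heavy lifting), whereas the paper runs an induction on the number of fixed points, using Lemma \ref{tube} to place every pair of fixed points inside an $R_p$ or a $TR_p$ and stripping these off; rigidity enters the paper only through the easy direction of the invariant from \cite{Ding}, in Proposition \ref{hex2}, to distinguish $\Hex_2$ from $\Sph_{2(p-1)}[6]$. Your setup is sound as far as it goes: the quotient is a branched cover, Riemann--Hurwitz gives the numerology, $F\neq 1$ for orientable actions, and the free case via surjections $\pi_1(\bar X)\twoheadrightarrow C_p$ is essentially the paper's Section \ref{freeproof}. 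One bookkeeping slip worth fixing: the quantity that must vanish is the sum of the monodromy data $g_i=\sigma^{c_i^{-1}}$ attached to small loops around the branch points, not the sum of the rotation numbers $c_i$ themselves; this is harmless if you parametrize by the $g_i$, but the inversion matters in exactly the delicate step below.

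The genuine gap is the step you flag and then defer: the claim that, up to simultaneous scaling, every admissible multiset is either a union of canceling pairs $\{a,-a\}$ or a $\Hex_n$-core plus canceling pairs. For $p=3$ this is a quick check (a multiset of $1$s and $2$s with $n_1\equiv n_2 \pmod 3$ visibly splits as $3m$ equal entries plus pairs), and there your plan can be completed. But for $p\geq 5$ the dichotomy fails as stated. Concretely, for $p=5$ the multiset $\{1,1,1,2\}$ consists of nonzero residues summing to $0$ and is realized over quotient genus $0$ by the smooth model of $w^5=(z-e_1)(z-e_2)(z-e_3)(z-e_4)^2$ with deck transformation $w\mapsto e^{2\pi i/5}w$: a connected, closed, orientable $C_5$-surface of genus $4$ with $F=4$. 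No scaling of $\{1,1,1,2\}$ is a union of canceling pairs (the multiplicities at $a$ and $-a$ are unequal --- three $1$s and no $4$s --- and scaling preserves this defect), and no $\Hex_n$-core can be split off because $F=4$ is not of the form $3n+2k$ with $n\geq 1$, while the $\Sph$ family with $2k+2=4$ carries only symmetric data. So the combinatorial lemma on which your whole non-free classification rests cannot be proved in the form you state it, and matching your invariants against the three families stalls precisely where all the content of the theorem sits. Note that this is also the pressure point for any approach: for $p\geq 5$ an $EB_p$ joining two fixed points whose local rotation numbers are not related by $\pm 1$ has a neighborhood that is neither $R_p$ nor $TR_p$, so the case analysis underlying Lemma \ref{tube} is where the real work lives, and your outline currently replaces that work with an assertion rather than an argument.
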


Let $N_r$ denote the genus $r$, closed non-orientable surface.

\begin{theorem}
Let $X$ be a connected, closed, non-orientable surface with an action of $C_p$. Then $X$ can be constructed via one of the following surgery procedures, up to $\operatorname{Aut}(C_p)$ actions on each of the pieces.
\begin{enumerate}
	\item $N_2^{\text{free}}\#_pN_r$, $r\geq 0$
	\item $\left(S^{2,1}+k[R_p]\right)\#_pN_r$, $r\geq 1$
	\item $\left(N_1[1]+k[R_p]\right)\#_pN_r$, $k,r\geq 0$
\end{enumerate}
\end{theorem}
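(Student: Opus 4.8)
The plan is to organize the entire classification around the fixed-point set. First I would record that for $p$ odd the fixed set $X^{C_p}$ is finite: by the slice theorem the $C_p$-action is linear near a fixed point, hence a nontrivial rotation of the tangent plane, whose only fixed vector is $0$. Write $f=|X^{C_p}|$. The quotient $X/C_p$ is again a closed surface, and away from the images of the fixed points the projection $X\to X/C_p$ is an honest $p$-fold cover, so Riemann--Hurwitz gives $\chi(X)=p\,\chi(X/C_p)-(p-1)f$; this is the numerical identity that pins down the genus parameter $r$ in each case. The decisive structural observation in the non-orientable setting is that the local rotation number at a fixed point, which lives in $(\Z/p)^\times$ once a generator and a local orientation are fixed, carries no global constraint: since $X$ is non-orientable one can transport a local orientation around an orientation-reversing loop to reverse its sign. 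This is precisely why the $\Hex_n$ family, which in the orientable theorem records an unremovable mismatch of rotation numbers, does not appear here, and why the three cases are governed purely by $f$: the free case $f=0$, the case $f$ even with $f\geq 2$, and the case $f$ odd.

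Second I would strip the ``free'' topology off using the connected-sum surgery. Choosing a $C_p$-invariant Morse function (obtained by averaging) yields an equivariant handle decomposition in which critical points occur either at the isolated fixed points or in free orbits of size $p$. Each free $1$- or $2$-handle orbit contributes a summand that splits off as an equivariant connected sum $\#_p N_r$: removing the $p$ conjugate disks supporting such an orbit and capping them shows $X\cong X'\#_p N_{r}$ with $X'$ of strictly smaller complexity and the same fixed set. Iterating reduces to a surface whose only topology beyond the base piece lives in a neighborhood of $X^{C_p}$.

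Third I would remove fixed points in pairs by reverse ribbon surgery. Recall that $R_p=S^{2,1}\setminus(p\text{ disks})$ carries exactly the two poles as fixed points together with a single free orbit of $p$ boundary circles. Given two fixed points, the non-orientability and connectedness of $X$ let me build an embedded $C_p$-invariant arc joining them whose $p$ translates are pairwise disjoint; a closed equivariant tubular neighborhood of the orbit of this arc is a copy of $R_p$. Excising it and capping the $p$ boundary circles by conjugate disks exhibits $X\cong X'+[R_p]$ with $X'$ having $f-2$ fixed points, and I would induct downward. This terminates at $f=2$ in the even case and $f=1$ in the odd case, the remaining fixed points being exactly those of the irreducible base pieces.

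Finally I would match the base pieces and normalize. When $f=0$ the reduced surface is a free non-orientable $C_p$-surface, i.e.\ a regular $p$-fold cover of some $N_s$, which one identifies with $N_2^{\text{free}}\#_p N_r$; when $f$ is even the base piece is $S^{2,1}$, and since $S^{2,1}$ and every $R_p$ are orientable the non-orientability must enter through the connected-sum factor, forcing $r\geq 1$; when $f$ is odd the base piece is the non-orientable $N_1[1]$, so $r\geq 0$ suffices. The residual ambiguity in the local rotation numbers is absorbed by the stated $\operatorname{Aut}(C_p)$-action on the pieces. I expect the main obstacle to be the reverse ribbon step: one must produce the invariant spanning arc with pairwise-disjoint orbit \emph{and} guarantee that excision leaves a connected surface still lying in the inductive family. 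This is where the non-orientable hypothesis does the essential work, supplying the orientation-reversing paths that both furnish the arc and rule out any surviving $\Hex_n$-type obstruction.
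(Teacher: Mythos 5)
The decisive gap is in your third step. You assert that an equivariant tubular neighborhood of the orbit of the invariant arc joining two fixed points ``is a copy of $R_p$.'' That is false in general: the paper's Lemma \ref{tube} shows that a neighborhood of the resulting $EB_p$ is isomorphic to either $R_p$ \emph{or} the twisted ribbon $TR_p$, which has a single boundary circle rather than $p$ of them, and excising a $TR_p$ and capping with $D^{2,1}$ removes only \emph{one} fixed point. Nothing in non-orientability lets you always choose the arc so that the $R_p$ case occurs, and your parity bookkeeping (even $f$ descends to $2$, odd $f$ to $1$) silently assumes it does. Worse, once the $TR_p$ case is admitted, reconstructing $X$ from $X'=X-_{x,y}[TR_p]$ requires knowing that $+_x[TR_p]$-surgery on $X'$ is independent of the chosen fixed point, and this independence genuinely fails in general: by Example \ref{Bn}, twisted ribbon surgery at different fixed points of the same space produces the non-isomorphic surfaces $\Hex_2$ and $S^{2,1}+2[R_3]$. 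This is why the paper's induction carries a second clause --- fixed-point-independence of $+[TR_p]$ for the non-orientable families --- proved by exhibiting explicit copies of $TR_p$ through any two fixed points together with a boundary-fixing automorphism swapping them (Lemma \ref{hexmap} and the constructions around Figures \ref{n5tat}, \ref{n9tat}), and why it needs the surgery identities $N_1[1]+[TR_p]\cong S^{2,1}\#_pN_1$ (Lemma \ref{N1[1]+[TAT]}) and $\Hex_1\#_pN_1\cong N_1[1]+[R_p]$ (Lemma \ref{N1[1]+[AT]}) to land back inside the stated families after a $TR_p$ step. Your observation that an orientation-reversing loop can flip local rotation numbers is the right heuristic for why no $\Hex_n$-type obstruction survives, but as stated it is only a heuristic; the entire content of the non-orientable theorem is in converting it into these lemmas, which your proposal does not supply.

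Two further steps are asserted rather than proved. Your equivariant-Morse step, claiming each free handle orbit ``splits off'' as an equivariant connected sum $\#_pN_r$, essentially assumes the structure the theorem asserts; no argument is given that a free handle orbit can be rearranged to be attached along $p$ disjoint conjugate disks in a single orbit. The paper instead settles the base cases directly: the free case via the bijection of Proposition \ref{freestuff} between free $C_p$-covers and nonzero orbits of $H^1(\,\cdot\,;\Z/p)$ under the mapping class group, computed with Dehn twists and crosscap slides (one orbit in genus $\geq 3$, and $(p-1)/2$ orbits for the Klein bottle, matching the $\operatorname{Aut}(C_p)$-ambiguity); and the $F=1$ case --- which your sketch treats as immediate --- via the classification of free actions on the bounded surface $\tilde N_{pr+1}$ (Proposition \ref{free with boundary}), which is what pins down $N_1[1]\#_pN_r$ as the unique one-fixed-point surface of its genus. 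Finally, the connectivity worry you flag at the end is real but needs more than a mention: the paper's Lemma \ref{connectedness} guarantees connectedness of $X-[R_p]$ only when $F\geq 3$, and at $F=2$ the excised space can genuinely disconnect (becoming $N_{r'}\times C_p$), a subcase your downward induction must, and does not, treat separately. Your Riemann--Hurwitz count does match the paper's Lemma \ref{euler characteristic} and correctly fixes the genus parameter $r$.
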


Unlike the corresponding result of Dugger, this classification does not provide a complete list of invariants distinguishing isomorphism classes. For example, we do not provide invariants with which to distinguish the $C_3$-surfaces $\Hex_2$ and $S^{2,1}+2[R_3]$. We instead prove that these spaces are non-isomorphic and that they represent the only closed and connected genus $6$ orientable $C_3$-surfaces with $6$ fixed points up to equivariant isomorphism. 


\subsection{Organization of the Paper}
Equivariant surgery procedures are outlined in Section \ref{surgintro}. Section \ref{thmsection} contains a statement of the main classification theorem for nontrivial $C_p$-surfaces. Some important equivariant surgery results are proved in Section \ref{invarianceresults}. A detailed proof of the main classification theorem from Section \ref{thmsection} is given in Sections \ref{freeproof} and \ref{nonfreeproof}.

\subsection{Acknowledgements}
The work in this paper was a portion of the author's thesis project at the University of Oregon. The author would first like to thank her doctoral advisor Dan Dugger for his invaluable guidance and support. The author would also like to thank Christy Hazel and Clover May for countless helpful conversations as well as Robert Lipshitz for many constructive comments. This research was partially supported by NSF grant DMS-2039316.  

\section{$C_p$-equivariant Surgeries of Surfaces}\label{surgintro}

Let $p$ be an odd prime. There are $(p-1)/2$ isomorphism classes of $C_p$-actions on $\R^2$ corresponding to rotation about the origin by a $p$th root of unity. Rotation of the plane by $\omega_i$ is isomorphic to rotation by $\omega_j$ only when $\omega_i=\overline{\omega_j}$. However if we consider such rotations up to an action of $\operatorname{Aut}(C_p)$, then we are left with only one isomorphism class of nontrivial actions on $\R^2$. In this section we lay the ground work for a classification of closed surfaces with a nontrivial action of $C_p$ up to an action of $\operatorname{Aut}(C_p)$. We do this by defining analogues of equivariant surgery methods from \cite{Dug19} in the odd prime case. 

For a $C_p$-surface $X$, let $F(X)$ denote the number of fixed points of $X$. It is useful to note that when the action is non-trivial, $F(X)$ must be finite. We also let $\beta(X)$ denote the \textbf{$\beta$-genus} of $X$, defined to be $\operatorname{dim}_{\Z/2} H^1_{\text{sing}}(X;\Z/2)$. 



\subsection{Equivariant Connected Sums}

\begin{definition}
Let $Y$ be a non-equivariant surface and $X$ a surface with a nontrivial order $p$ homeomorphism $\sigma\colon X\rightarrow X$. Define $\tilde{Y}:=Y\setminus D^2$, and let $D$ be a disk in $X$ so that $D$ is disjoint from each of its conjugates $\sigma^i D$. Similarly let $\tilde{X}$ denote $X$ with each of the $\sigma^i D$ removed. Choose an isomorphism $f\colon \partial \tilde{Y}\rightarrow \partial D$. We define an \textbf{equivariant connected sum} $X\#_p Y$, by 
\[\left[\tilde{X}\sqcup\coprod_{i=0}^{p-1}\left(\tilde{Y}\times \{i\}\right)\right]/\sim\]
where $(y,i)\sim \sigma^i(f(y))$ for $y\in \partial \tilde{Y}$ and $0\leq i \leq p-1$. We can see an example of this surgery in Figure \ref{equivcnctsum}.
\end{definition}

\begin{figure}
\begin{center}
\includegraphics[scale=.5]{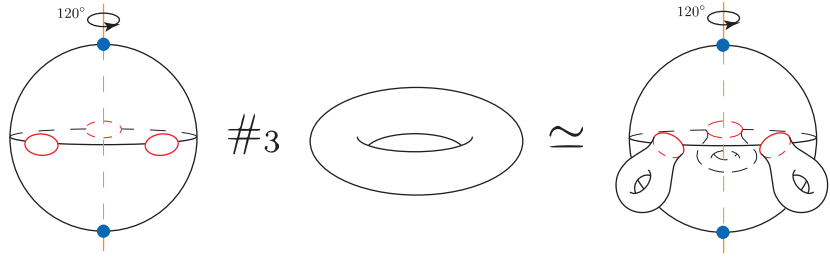}
\end{center}
\caption{\label{equivcnctsum} We can see above the result of the surgery $S^{2,1}\#_3 M_1$.}
\end{figure}

We will prove in Proposition \ref{welldefined} that the space $X\#_p Y$ is independent of the chosen disk $D$.

\begin{remark}
Any nontrivial $C_p$-surface has only a finite number of isolated fixed points since each fixed point must have a neighborhood isomorphic to $\R^2$ with a rotation action. 

For a $C_p$-space $X$ with $F$ fixed points and $\beta$-genus $\beta_1$ and a non-equivariant surface $Y$ with $\beta$-genus $\beta_2$, $X\#_p Y$ has $F$ fixed points and $\beta$-genus $\beta_1+p\beta_2$.
\end{remark}

\subsection{$C_p$-equivariant Ribbon Surgeries}
There are $(p-1)/2$ non-isomorphic $C_p$-actions on $S^2$ given by rotation by a primitive $p$th root of unity about the axis passing through its north and south poles. When the prime $p$ is understood, we let $S^{2,1}_{(i)}$ denote this sphere with rotation by $e^{2\pi i/p}$ where $1\leq i\leq p-1$. We additionally write $S^{2,1}$ when only considering such actions up to twisting by $\operatorname{Aut}(C_p)$. 

\begin{remark}
    The sphere $S^{2,1}_{(i)}$ is an example of a \textbf{representation sphere}. It can be defined as the one point compactification of a two-dimensional nontrivial $C_p$ representation. These objects are incredibly important in equivariant homotopy theory, so we choose our notation to be consistent with other papers in this field.
\end{remark}

\begin{definition}\label{ribbondef}
Let $D$ be a disk in $S^{2,1}_{(i)}$ that is disjoint from each of its conjugate disks. We define a \textbf{$C_p$-equivariant ribbon} as 
\[S^{2,1}_{(i)}\setminus \left(\coprod_{j=0}^{p-1} \sigma^j D\right),\]
and we denote this space $R_{p,(i)}$. We can see $R_{p,(1)}$ depicted in Figure \ref{R_p} in the cases $p=3$ and $p=5$. The action of $R_{p,(i)}$ can be described as rotation about the orange axis. There are two fixed points of this action, given by the points in blue where the axis of rotation intersects the surface.
\end{definition}

\begin{definition}
Let $X$ be a surface with a nontrivial order $p$ homeomorphism $\sigma\colon X\rightarrow X$. Choose a disk $D_1$ in $X$ that is disjoint from $\sigma^jD_1$ for each $j$. Then remove each of the $\sigma^jD_1$ to form the space $\tilde{X}$. As in Definition \ref{ribbondef}, let $D$ be the disk in $S^{2,1}_{(i)}$ which was removed (along with its conjugates) to form $R_{p,(i)}$. Choose an isomorphism $f\colon \partial D_1\rightarrow \partial D$ and extend this equivariantly to an isomorphism $\tilde{f}\colon \partial \tilde{X}\rightarrow\partial R_{p,(i)}$. We then define \textbf{$C_p$-ribbon surgery} on $X$ to be the space
\[\left(\tilde{X}\sqcup R_{p,(i)}\right)/\sim\]
where $x\sim \tilde{f}(x)$ for $x\in\partial\tilde{X}$. This is a new $C_p$-surface which we will denote $X+[R_{p,(i)}]$.
\end{definition}

\begin{remark}
There is an action of $\operatorname{Aut}(C_p)$ on $S^{2,1}_{(i)}$ (and thus $R_{p,(i)}$) given by $\sigma S^{2,1}_{(i)}=S^{2,1}_{(\sigma(i))}$ for $\sigma\in\operatorname{Aut}(C_p)$. 
Our goal is to classify all $C_p$-surfaces using equivariant surgery methods up to this action of $\operatorname{Aut}(C_p)$ on each of the surgery pieces. Going forward, we will use the notation $X+[R_p]$ to denote a $C_p$-surface obtained by performing some $C_p$-ribbon surgery on $X$. The notation $X+[R_p]$ therefore refers to several distinct isomorphism classes of $C_p$-surfaces which can be obtained from each other by the action of $\operatorname{Aut}(C_p)$ on each of the surgery pieces. We similarly let $S^{2,1}$ denote the $2$-sphere with a rotation action of $C_p$, noting that each of these can be obtained from the standard rotation of $e^{2\pi/p}$ by this action of $\operatorname{Aut}(C_p)$. 

In the $p=3$ case, this action of $\operatorname{Aut}(C_p)$ is trivial since $S^{2,1}_{(1)}\cong S^{2,1}_{(2)}$. Thus the notation $X+[R_3]$ (as well as $S^{2,1}$) is well-defined and denotes a single $C_3$-surface up to equivariant isomorphism.
\end{remark}

\begin{figure}
\begin{center}
\includegraphics[scale=.6]{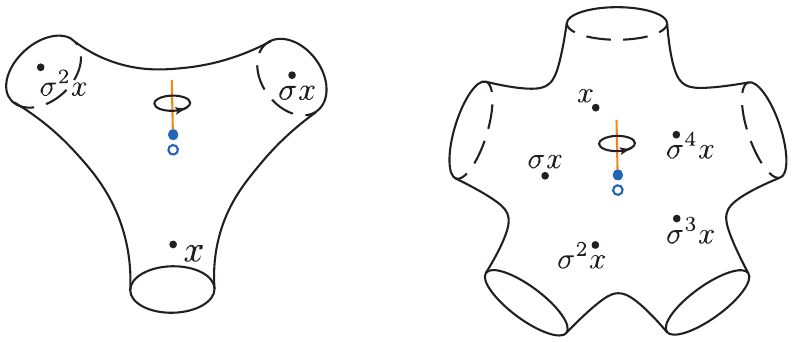}
\end{center}
\caption{The $C_p$-surface ${R_p}_{(i)}$ in the cases $p=3$ (left) and $p=5$ (right)\label{R_p}.}
\end{figure}

We will prove in Corollary \ref{independence of disk choice} that the space $X+[R_{p,(i)}]$ is independent of the chosen disk $D_1$.

For a $C_p$-surface $X$ with $F$ fixed points and $\beta$-genus $\beta$, the space $X+[R_{p,(i)}]$ has $F+2$ fixed points and $\beta$-genus $\beta+2(p-1)$. 

Let $X+k[R_{p,(i)}]$ denote the surface obtained by performing $C_p$-ribbon surgery $k$ times on $X$. We will see in Corollary \ref{independence of disk choice} that $+[R_{p,(i)}]$-surgery is independent of the choice of disk $D_1$. Because of this, $C_p$-ribbon surgery is associative and commutes with itself, making this notation well-defined.

\begin{definition}
We next define the $C_p$-surface $TR_{p,(i)}$ using a gluing diagram. Start with a $2p$-gon with a disk removed from its center. Then identify opposite edges of the $2p$-gon in the same direction to obtain the space $TR_{p,(i)}$. Figure \ref{TR3} shows this in the case $p=3$. The action on $TR_{p,(i)}$ is defined by rotation about its center by an angle corresponding to the $p$th root of unity $e^{2\pi i /p}$ ($1\leq i \leq (p-1)/2$). Note that $TR_{p,(i)}\cong TR_{p,(j)}$ only when $j=i$ or $j=p-i$. This surface is orientable with one boundary component.
\end{definition}

When $p=3$, $TR_{3,(1)}\cong TR_{3,(2)}$, so for simplicity of notation we will denote this space by $TR_3$.

\begin{figure}
\begin{center}
\includegraphics[scale=.6]{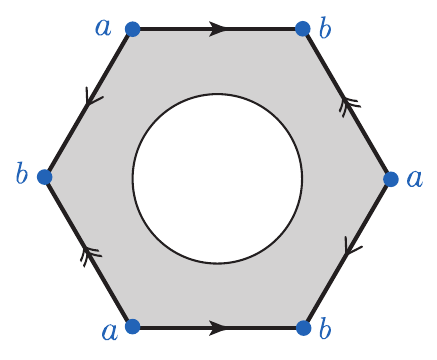}
\end{center}
\caption{\label{TR3} The $C_3$-surface $TR_3$.}
\end{figure}

\begin{lemma}
The surface $TR_{p,(i)}$ has two fixed points.
\end{lemma}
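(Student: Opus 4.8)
The plan is to work directly with the gluing diagram defining $TR_{p,(i)}$ and to track what the rotation does to each type of point: the removed-disk boundary, interior points, the interiors of the polygon edges, and the polygon vertices. Label the vertices of the $2p$-gon $v_0, v_1, \dots, v_{2p-1}$ counterclockwise and let $e_j$ be the edge from $v_j$ to $v_{j+1}$ (indices mod $2p$), so that the opposite edge of $e_j$ is $e_{j+p}$. Reading the gluing off Figure \ref{TR3}, identifying opposite edges ``in the same direction'' glues the directed edge $e_j$ to the reverse of $e_{j+p}$; on vertices this gives $v_j \sim v_{j+p+1}$ and $v_{j+1}\sim v_{j+p}$. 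Since $p$ is odd, $p\pm 1$ is even, so every identification preserves the parity of the vertex index. First I would conclude that there are exactly two vertex classes, the even-indexed vertices $\{v_0, v_2, \dots, v_{2p-2}\}$ and the odd-indexed vertices $\{v_1, v_3, \dots, v_{2p-1}\}$, and that these are distinct points of $TR_{p,(i)}$. As a consistency check, filling in the removed disk gives a closed surface with $V=2$, $E=p$, $F=1$, hence Euler characteristic $3-p$; together with the fact that each edge is glued to its opposite with reversed boundary orientation, this recovers the stated claim that $TR_{p,(i)}$ is the orientable genus $(p-1)/2$ surface with one boundary component.

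Next I would analyze the rotation. The action by $e^{2\pi i/p}$ rotates the $2p$-gon by $2i$ steps, i.e.\ $v_k \mapsto v_{k+2i}$ and $e_j \mapsto e_{j+2i}$. Because $2i$ is even, this permutation preserves index parity, so it carries the even-vertex class to itself and the odd-vertex class to itself; hence both vertex points are fixed by every element of $C_p$, producing two fixed points. The heart of the argument is to show there are no others. The unique fixed point of the planar rotation of the $2p$-gon is its center, which has been removed and replaced by a boundary circle on which $C_p$ acts freely, since a nontrivial rotation of a circle has no fixed points. An interior point other than the center is identified with no other point and is moved by the rotation, so it is not fixed. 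For a point in the interior of an edge $e_j$, its class is $\{x, x'\}$ with $x'\in e_{j+p}$, and $\sigma^m\cdot x$ lies in $e_{j+2im}$; requiring $\sigma^m\cdot x \in e_j \cup e_{j+p}$ forces $2im\equiv 0$ or $p \pmod{2p}$, and since $2im$ is even while $p$ is odd, and $\gcd(i,p)=1$, this holds only for the identity. Thus no edge-interior point is fixed, and exactly the two vertex classes remain.

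Finally I would confirm that these two classes are honest isolated fixed points with the local rotation model required by the paper. Since $TR_{p,(i)}$ is a surface, the even-vertex point has a disk neighborhood assembled from the $p$ angular sectors at $v_0, v_2, \dots, v_{2p-2}$ (topologically a disk regardless of the total cone angle); the generator permutes these $p$ sectors cyclically, shifting the even-vertex index by $2i$, i.e.\ by $i$ positions in the cyclic list of even vertices, and since $\gcd(i,p)=1$ this is a free cyclic permutation. Hence the action on the neighborhood is topologically a rotation, and likewise at the odd-vertex point. I expect the main obstacle to be the bookkeeping in the first two paragraphs: reading the vertex identifications correctly off the ``same direction'' convention, rather than the point-reflection convention (which would instead give $p$ vertex classes and a non-orientable quotient), and then ruling out fixed points on the edges, where one must invoke both the parity obstruction and $\gcd(i,p)=1$. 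The remaining verifications are routine once the two vertex classes are correctly identified.
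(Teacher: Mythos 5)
Your proof is correct and takes essentially the same approach as the paper: both analyze the gluing diagram of the $2p$-gon, determine that the vertices fall into exactly two equivalence classes, and observe that the rotation (shifting edges by $2i$ positions) preserves each class, yielding two fixed points. You are in fact more thorough than the paper's proof, which only verifies that the two vertex classes are fixed and leaves implicit the easy checks you carry out explicitly (no fixed points on the boundary circle, the interior, or the edge interiors, and the local rotation model at each vertex class).
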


\begin{proof}
Consider the space $TR_{p,(i)}$ with its first $p$ edges labeled $e_1,\dots ,e_p$ as shown in Figure \ref{TRp}. Since opposite edges of the $2p$-gon are identified, all other edges are named accordingly. Let $v_1$ be the starting vertex of $e_1$, and let $v_2$ be the ending vertex of $e_1$. We first claim that all other vertices of the $2p$-gon representing $TR_{p,(i)}$ must be identified with either $v_1$ or $v_2$. Looking at the edge labeled $e_2$ towards the top of the polygon, we see that $e_2$ shares a starting vertex with $e_1$. Now looking at its opposite edge, it is also the case that $e_2$ shares an ending vertex with $e_1$. We can keep going to see that $e_3$ must share starting and ending vertices with $e_2$, and in fact all vertices $e_k$ must have starting vertex $v_1$ and ending vertex $v_2$. 

Finally observe that since the action of $C_p$ takes $e_1$ to $e_{k}$ for some $k$, the vertices $v_1$ and $v_2$ are fixed under the action. Thus, $TR_{p,(i)}$ has two fixed points.
\end{proof}

\begin{figure}
\begin{center}
\includegraphics[scale=.4]{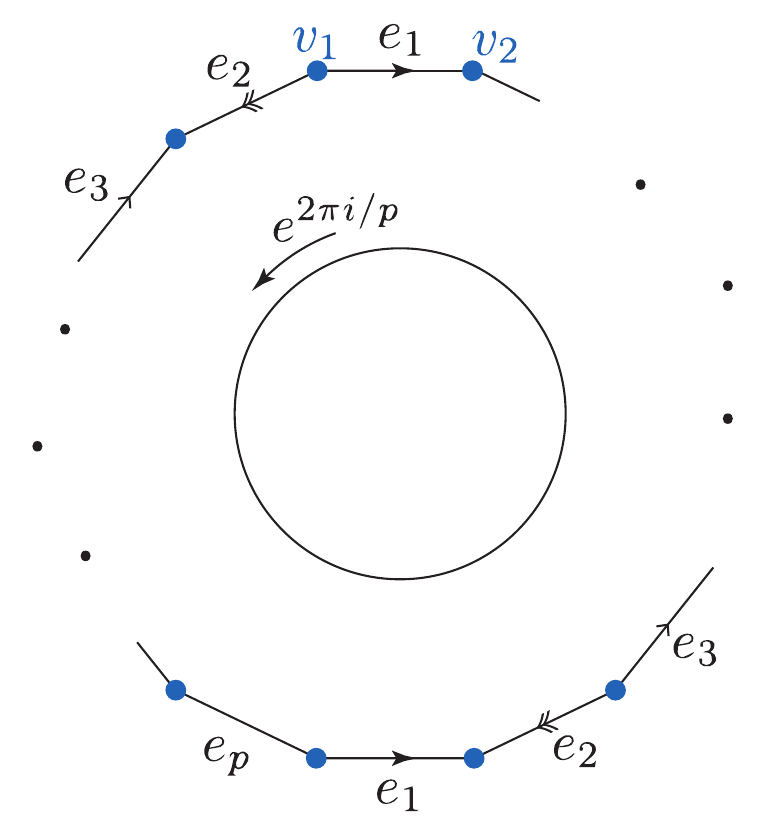}
\end{center}
\caption{\label{TRp} The surface $TR_{p,(i)}$.}
\end{figure}

\begin{definition}
Let $X$ be a non-trivial $C_p$-space with at least one isolated fixed point $x$. Choose a neighborhood $D_x$ of $x$ that is fixed by the action of $\sigma$. We then let $\tilde{X}$ denote $X\setminus D_x$. The action on the boundary of $\tilde{X}$ will be rotation by $e^{2\pi i/p}$ for some $i$. Fix an isomorphism $f\colon \partial\tilde{X}\rightarrow \partial TR_{p,(i)}$. The \textbf{$C_p$-twisted ribbon surgery} on $X$ is given by
\[\left(\tilde X\sqcup TR_{p,(i)}\right)/\sim\]
where $y\sim f(y)$ for $y\in\partial\tilde{X}$. We denote this new space by $X+_x[TR_p]$.
\end{definition}

For a $C_p$-surface $X$ with $F$ fixed points and $\beta$-genus $\beta(X)$, the space $X+_x[TR_p]$ has $F+1$ fixed points and $\beta$-genus $\beta(X)+2(p-1)$.

\begin{remark}
We will see in Corollary \ref{independence of disk choice} that $+[R_{p,(i)}]$-surgery does not depend on the initial disks chosen for the surgery, making the notation $X+[R_{p,(i)}]$ well defined. Unfortunately, the same is not true of twisted ribbon surgery. To specify our choice of initial fixed point $x$, we will use the notation $X+_x[TR_p]$. We will see in Example \ref{Bn} a space $X$ and choices of fixed points $x$ and $y$ where $X+_x[TR_p]\not\cong X+_y[TR_p]$.
\end{remark}

Let $X$ be a $C_p$-space with two distinct fixed points $x$ and $y$. By Proposition \ref{EB in X}, there exists a simple path $\alpha$ in $X$ from $x$ to $y$ that does not intersect its conjugate paths. Observe that the union of all conjugates of $\alpha$ is isomorphic to $EB_p$, where $EB_p$ denotes the unreduced suspension of $C_p$. In particular, given any $C_p$-space $X$ with at least two isolated fixed points, we can find a copy of $EB_p$ sitting inside $X$. We know from Lemma \ref{tube} that a neighborhood of this copy of $EB_p$ must be isomorphic to $R_{p,(i)}$ or $TR_{p,(i)}$. Given such a space, we can ``undo'' the corresponding ribbon surgery to construct a new space $X-[R_{p,(i)}]$ (respectively $X-[TR_p]$) which we define below. Figure \ref{ribbonswithEB} shows us how $R_{p,(i)}$ and $TR_{p,(i)}$ can be viewed as neighborhoods of $EB$. 

\begin{figure}
\begin{center}
\includegraphics[scale=.5]{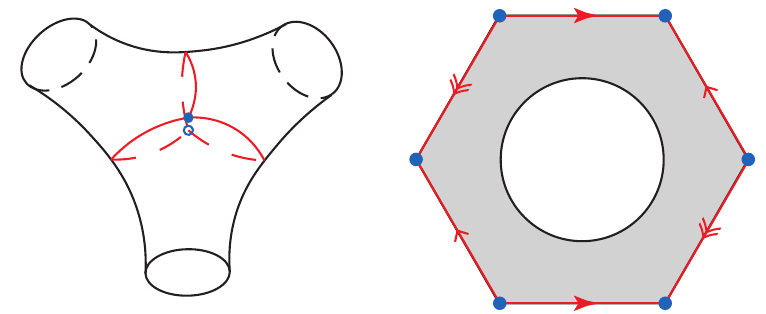}
\end{center}
\caption{\label{ribbonswithEB} The spaces $R_{p,(i)}$ (left) and $TR_{p,(i)}$ (right) in the case $p=3$, each containing $EB_p$ in red.}
\end{figure}

\begin{definition}
Let $X$ be a $C_p$-surface with isolated fixed points $a$ and $b$, and suppose the corresponding $EB_p$ containing $a$ and $b$ has a neighborhood homeomorphic to $R_{p,(i)}$. Then $\tilde{X}:=X\setminus R_{p,(i)}$ has $p$ boundary components, and there is an isomorphism $f\colon \partial \tilde{X}\rightarrow \partial\left(D^2\times C_p\right)$. Define $X-[R_{p,(i)}]$ to be 
\[\left(\tilde{X}\sqcup \left(D^2\times C_p\right)\right)/\sim\]
where $a\sim f(a)$ for $a\in\partial \tilde{X}$.
\end{definition}

As a result of this surgery, the space $X-[R_{p,(i)}]$ has $2$ fewer fixed points, and its $\beta$-genus is reduced by $2(p-1)$ from that of $X$. Moreover, if $X$ was a connected $C_p$-surface with at least $3$ fixed points, then $X-[R_{p,(i)}]$ is also connected. This does not have to be the case when $F=2$ however. For example, there exists $EB_p\subseteq S^{2,1}$ such that $\left(S^{2,1}\#_p M_1\right) -[R_p] \cong M_1\times C_p$. 

Let $a,b\in X$ be fixed points such that $a$ and $b$ live in some copy of $TR_{p,(i)}$ inside of $X$. We can similarly define $X-_{a,b}[TR_p]$ to be the result of surgery which removes this copy of $TR_{p,(i)}$ from $X$ and glues in $D^{2,1}$ along the boundary. As one would expect, the space $X-_{a,b}[TR_p]$ has one fewer fixed point and $\beta$-genus $p-1$ smaller than $X$. 

\begin{remark}
Although by Corollary \ref{independence of disk choice} we know $+[R_{p,(i)}]$ is independent of the disks chosen, $-[R_{p,(i)}]$ surgery does depend on a choice of $EB_p$. Two different choices of $R_{p,(i)}$ in a space can result in different spaces once $-[R_{p,(i)}]$ is performed. As a result, the notation $X-[R_{p,(i)}]$ is not well defined. Going forward, we will use the notation $X-[R_p]$ when the choice of $R_{p,(i)}$ is understood. Figure \ref{-Rpchoices} shows this using the example $S^{2,1}\#_3 M_1$. For the choice of $EB$ on the left, $-[R_3]$ surgery results in the space $M_1\times C_3$. For the choice on the right, $-[R_3]$ surgery results in the space $M_1^{\text{free}}$
\end{remark}

\begin{figure}
\begin{center}
\includegraphics[scale=.3]{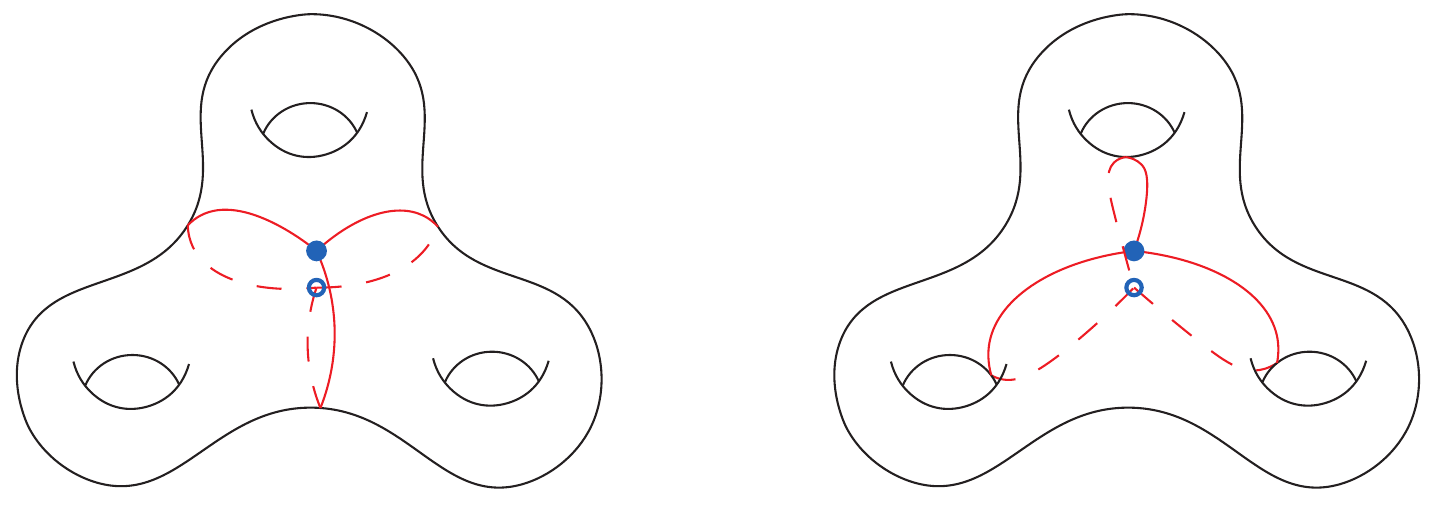}
\end{center}
\caption{\label{-Rpchoices} Two choices of $EB$ in $S^{2,1}\#_3 M_1$.}
\end{figure}

\begin{proposition}
Let $X$ be a non-trivial $C_p$-surface and $Y$ a non-equivariant surface. Then $\left(X+[R_{p,(i)}]\right)\#_pY\cong \left(X\#_pY\right)+[R_{p,(i)}]$. If $X$ has a fixed point $x$, it is also true that $\left(X+_x[TR_p]\right)\#_pY\cong \left(X\#_pY\right)+_x[TR_p]$.

Additionally, if $X$ is a space for which $-[R_p]$ or $-[TR_p]$-surgeries are defined, then $\left(X-[R_p]\right)\#_pY\cong \left(X\#_pY\right)-[R_p]$ (respectively $\left(X-[TR_p]\right)\#_pY\cong \left(X\#_pY\right)-[TR_p]$).
\end{proposition}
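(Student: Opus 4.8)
The plan is to exploit the fact that both $\#_p Y$ and $+[R_{p,(i)}]$ (and likewise the twisted and inverse surgeries) are \emph{local} modifications: each removes a single orbit of disjoint conjugate disks and glues in a prescribed piece along the resulting boundary components. The engine of the proof is that operations performed in disjoint regions of $X$ can be carried out in either order, yielding canonically homeomorphic results. Concretely, I would first invoke the independence-of-disk results to arrange the two surgeries to happen in disjoint parts of $X$. By Proposition \ref{welldefined} the connected sum $X \#_p Y$ is independent of the disk $D$ used to cut out the orbit $\{\sigma^i D\}$, and by Corollary \ref{independence of disk choice} the ribbon surgery $X + [R_{p,(i)}]$ is independent of the disk $D_1$. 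Since a nontrivial $C_p$-surface has only finitely many (isolated) fixed points, its free locus is a nonempty open set in which we may choose a disk disjoint from all of its conjugates and also disjoint from any prescribed compact region; so I can select $D$ and $D_1$ together with all their conjugates to be pairwise disjoint.

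With disjoint disks fixed, I would describe the combined space $Z$ obtained from $X$ by simultaneously removing both orbits $\{\sigma^i D\}$ and $\{\sigma^j D_1\}$ and gluing in the $p$ copies of $\tilde Y$ along the first orbit of boundaries and the ribbon $R_{p,(i)}$ along the second, using the respective (equivariantly extended) boundary identifications. Performing $+[R_{p,(i)}]$ first produces a space in which the disks $\sigma^i D$ survive untouched, so a subsequent $\#_p Y$ along those disks yields exactly $Z$; performing $\#_p Y$ first leaves the disks $\sigma^j D_1$ intact, so a subsequent $+[R_{p,(i)}]$ along them again yields $Z$. The identity map on the common subspace $\tilde X$ and on each glued-in piece assembles into a homeomorphism $\left(X + [R_{p,(i)}]\right)\#_p Y \to \left(X \#_p Y\right) + [R_{p,(i)}]$. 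This map is $C_p$-equivariant because every gluing is performed equivariantly (conjugate pieces glued to conjugate boundaries via $\sigma^i f$ and the equivariant extension $\tilde f$), and because the $C_p$-action permutes the two disjoint orbits of disks among themselves without mixing them. The twisted case $+_x[TR_p]$ is identical: the disk $D_x$ neighboring the fixed point $x$ can be taken disjoint from the connected-sum orbit $\{\sigma^i D\}$ (shrinking $D_x$ if necessary, which does not change $X +_x[TR_p]$), after which the same two-region commutation argument applies.

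For the inverse surgeries $-[R_p]$ and $-[TR_p]$, the argument is the same in spirit but requires care about the chosen geometric data rather than a free disk. Here the surgery is specified by a copy of $EB_p$ (and its neighborhood homeomorphic to $R_{p,(i)}$ or $TR_{p,(i)}$, as guaranteed by Lemma \ref{tube}) sitting inside $X$, and the identity $\left(X - [R_p]\right)\#_p Y \cong \left(X \#_p Y\right) - [R_p]$ is to be read with the \emph{same} copy of $R_{p,(i)}$ used on both sides. I would choose the connected-sum disk $D$ (and its conjugates) in the free locus away from this fixed neighborhood of $EB_p$; this is possible precisely because the neighborhood is compact and the free locus surrounding it is still large enough to contain a disjoint conjugate orbit. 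Then this copy of $EB_p \subseteq X$ determines the same copy of $EB_p \subseteq X \#_p Y$, and removing the ribbon and gluing in $D^2 \times C_p$ commutes with the connected sum exactly as before.

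The main obstacle I anticipate is not the commutation itself, which is essentially formal once the regions are disjoint, but rather justifying that the disks can \emph{always} be chosen disjoint and that invoking the independence results is legitimate in each case. For the positive surgeries this is clean; for the inverse surgeries the delicate point is verifying that the prescribed $EB_p$ leaves enough room for a full conjugate orbit of connected-sum disks in its complement, and that shrinking $D_x$ in the twisted case genuinely does not alter $X +_x[TR_p]$ up to isomorphism. I would handle these by noting that all the relevant pieces ($R_{p,(i)}$, $TR_{p,(i)}$, $D_x$) are compact with nonempty free complement, so a suitable disk orbit exists, and by appealing once more to Corollary \ref{independence of disk choice} and the definition of twisted ribbon surgery to absorb the choices.
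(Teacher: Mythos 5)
Your proposal is correct and takes essentially the same approach as the paper: the paper's (very terse) justification is precisely the disjointness argument---$-[R_p]$ and $\pm[TR_p]$ surgeries are anchored at fixed points or at a chosen $EB_p$, while the connected sum can be performed away from them---together with an appeal to the machinery behind Corollary \ref{independence of disk choice} for the $+[R_{p,(i)}]$ case. The only cosmetic difference is that you invoke Corollary \ref{independence of disk choice} and Proposition \ref{welldefined} as black boxes to normalize the disk orbits into disjoint position and identify both sides with a single ``simultaneous surgery'' space, whereas the paper suggests re-running the corollary's path-sliding argument directly; these amount to the same thing.
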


In other words,the equivariant connected sum surgery operation commutes with $\pm [R_{p,(i)}]$ and $\pm [TR_p]$ on all $C_p$-surfaces $X$ for which these surgeries are defined. 

In the case of $-[R_p]$ or $\pm [TR_p]$ surgeries, this is clear because these surgery operations take place in the neighborhood of fixed points while we can choose to perform any equivariant connected sum operation away from these fixed points. The proof that equivariant connected sum surgery commutes with $+[R_{p,(i)}]$-surgery is similar to the argument presented in the proof of Corollary \ref{independence of disk choice} and is left to the reader.

\subsection{M\"{o}bius Band Surgeries}

\begin{definition}
Represent the M\"{o}bius band as the usual quotient of the unit square where $(0,y)\sim (1,1-y)$. We define $(p-1)/2$ actions of $C_p$ on the m\"{o}bius band as follows. For a generator $\sigma$ of $C_p$, let $\sigma(x,y)=\left(x+\frac{i}{p},1-y\right)$ for $1\leq i \leq (p-1)/2$. Denote this space $MB_{p,(i)}$.  Figure \ref{mobiusband} gives a visual representation of this action in the case $p=3$.
\end{definition}

Note that the action on the boundary of $MB_{p,(i)}$ is the rotation action of $S^1$ by $e^{-2\pi i/p}$. 

When $p=3$, $MB_{3,(1)}\cong MB_{3,(2)}$, so for simplicity of notation we will denote this space by $MB_3$.

\begin{figure}
\begin{center}
\includegraphics[scale=.5]{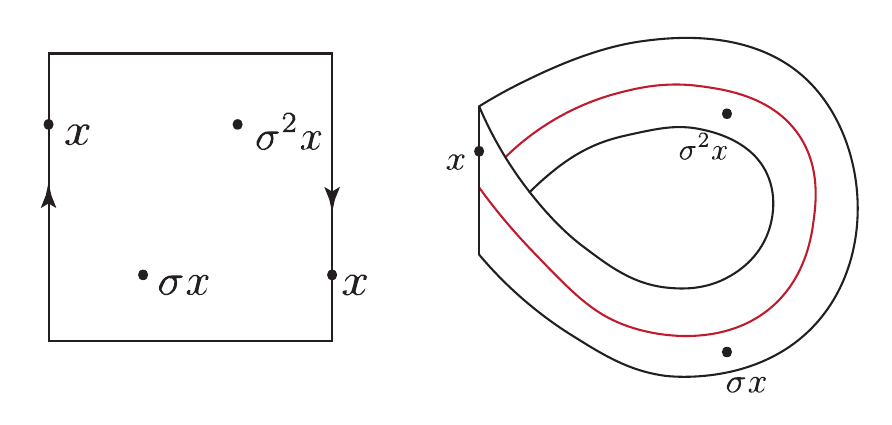}
\end{center}
\caption{\label{mobiusband} The $C_3$-space $MB_3$, whose underlying space is the m\"{o}bius band.}
\end{figure}

\begin{definition}
Let $X$ be a non-trivial $C_p$-surface with fixed point $x$. Choose a neighborhood $D_x$ of $x$ which is fixed under the action of $\sigma\in C_p$. The $C_p$-space $\tilde{X}:=X\setminus D_x$ has a distinguished boundary component isomorphic to $S^1$ with rotation by some angle $e^{2\pi i/p}$. Fix an equivariant isomorphism $f\colon \partial \tilde{X}\rightarrow \partial MB_{p,(i)}$. We can then define a new $C_p$-space
\[\left(\tilde{X}\sqcup MB_{p,(i)}\right)/\sim\]
where $x\sim f(x)$ for $x\in \partial \tilde{X}$. Denote this new space by $X+_x[FMB_p]$. This process is called \textbf{fixed point to m\"{o}bius band surgery}.
\end{definition}

\begin{remark}
Given a $C_p$ space $X$ with $F$ fixed points and $\beta$-genus $\beta$, the space $X+[FMB_p]$ has $F-1$ fixed points and genus $\beta+1$.
\end{remark}

\begin{definition}
We can similarly define \textbf{m\"{o}bius band to fixed point surgery} on a $C_p$ space $X$ with $MB_{p,(i)}\subseteq X$. This procedure is the reverse process of $+_x[FMB_p]$ surgery in the sense that it removes $MB_p$ from $X$ and glues in a copy of $D^{2,1}$ along the boundary. The resulting space is denoted $X+[MB_pF]$. This notation will only be used when the choice of m\"{o}bius band is understood.
\end{definition}

\section{Examples in the $p=3$ Case}

In this section we will highlight some of the surfaces we can now build using equivariant surgery. Although each of the following examples have analogues for higher $p$, we will focus mainly on the $p=3$ case.

\begin{example}[Free Torus]\label{freetorus}
There is a free $C_3$-action on the torus $M_1$ given by rotation of $120^\circ$ about its center. Denote this $C_3$-space by $M_1^{\text{free}}$. From this, we can perform an equivariant connected sum operation with the $g$-holed torus $M_g$ to construct the space $M_{3g+1}^{\text{free}}:=M_1^{\text{free}}\#_3 M_g$. The result is a free $C_3$-action on the $(3g+1)$-holed torus (ie. the orientable surface with beta genus $\beta=6g+2$). We will see in the next section that up to equivariant isomorphism there is only one free action of $C_3$ on $M_{3g+1}$. The space $M_{3g+1}^{\text{free}}$ can be seen in Figure \ref{free7torus} in the case $g=2$.
\end{example}

\begin{figure}
\begin{center}
\includegraphics[scale=.4]{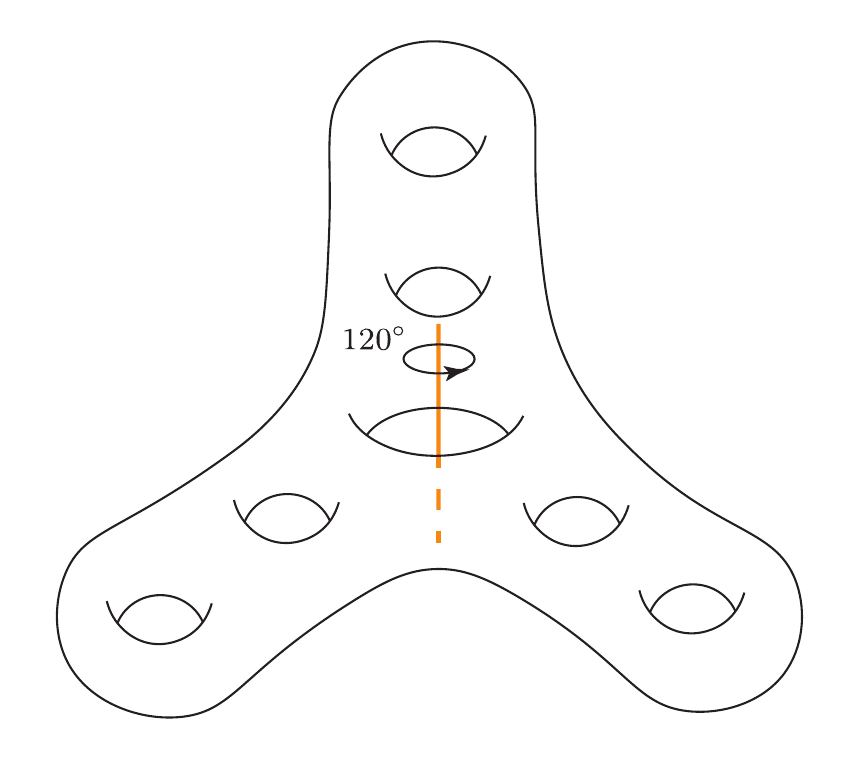}
\end{center}
\caption{\label{free7torus} The space $M_7^{\text{free}}$.}
\end{figure}

\begin{example}[$\Sph_g{[F]}$]
The representation sphere $S^{2,1}$ is defined as the $2$-sphere with a rotation action of $120^\circ$ about the axis passing through the north and south poles of the sphere. Since ribbon surgery and connected sum surgery commute with each other, we can consider the space $\Sph_{2k+3g}[2k+2]:=\left(S^{2,1}+k[R_3]\right)\#_3M_g$ which is constructed by performing ribbon surgery $k$ times on $S^{2,1}$ and then performing connected sum surgery with the orientable surface $M_g$. 

The space $\Sph_{2k+3g}[2k+2]$ has $2k+2$ fixed points and is non-equivariantly isomorphic to $M_{2k+3g}$.
\end{example}

\begin{example}[Non-free Torus]
Let $\Hex_1$ (Figure \ref{S(2,1)+[TR3]}) denote the space $S^{2,1}+_S[TR_3]$ where $S$ denotes the south pole of $S^{2,1}$. Then $\Hex_1$ has $\beta$-genus $\beta=2$ and $3$ fixed points. We can additionally observe that the space $S^{2,1}+_N[TR_3]$ (where $N$ is the north pole this time) is isomorphic to $\Hex_1$. 

For $p>3$, we can define similar spaces denoted ${\Hex_1^p}_{(i)}$ (where the action is given by the usual rotation by a $p$th root of unity about the center). The underlying space is a genus $(p-1)/2$ orientable surface represented by a $2p$-gon with appropriate identifications. The notation $\Hex$ was chosen to remark on the fact that this surface is most easily seen through this polygonal representation of $M_{(p-1)/2}$.
\end{example}

\begin{figure}
\begin{center}
\includegraphics[scale=.5]{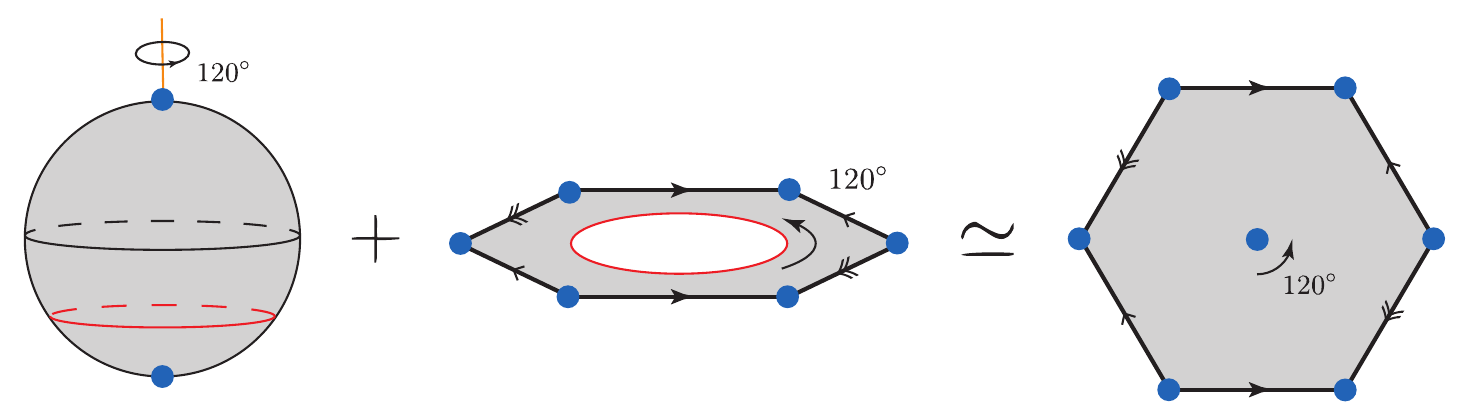}
\end{center}
\caption{\label{S(2,1)+[TR3]} The space $\Hex_1=S^{2,1}+_S [TR_3]$.}
\end{figure}

\begin{example}[$\Hex_n$]\label{Bn}
Consider the surface $\Hex_1+[R_3]$ with $\beta$-genus $\beta=6$ and $F=5$ fixed points. Label the fixed points as shown in Figure \ref{Bnsetup}. As a result of Lemma \ref{M1[3]+[TAT]}, we know that $+_{c_i}[TR_3]$-surgery results in a space isomorphic to $S^{2,1}+2[R_3]$. One naturally asks the question: Does twisted ribbon surgery yield the same space when centered around the fixed points $a$ or $b$? As it turns out, we get the same result after performing $+_a[TR_3]$-surgery, but ribbon surgery centered on the point $b$ yields a different $C_3$-surface. This new surface (which we will call $\Hex_2$) is depicted in Figure \ref{Bnsetup}. Proposition \ref{hex2} contains the proof of the fact that $\Hex_2$ and $S^{2,1}+2[R_3]$ are non-isomorphic surfaces. 

\begin{figure}
\begin{center}
\includegraphics[scale=.5]{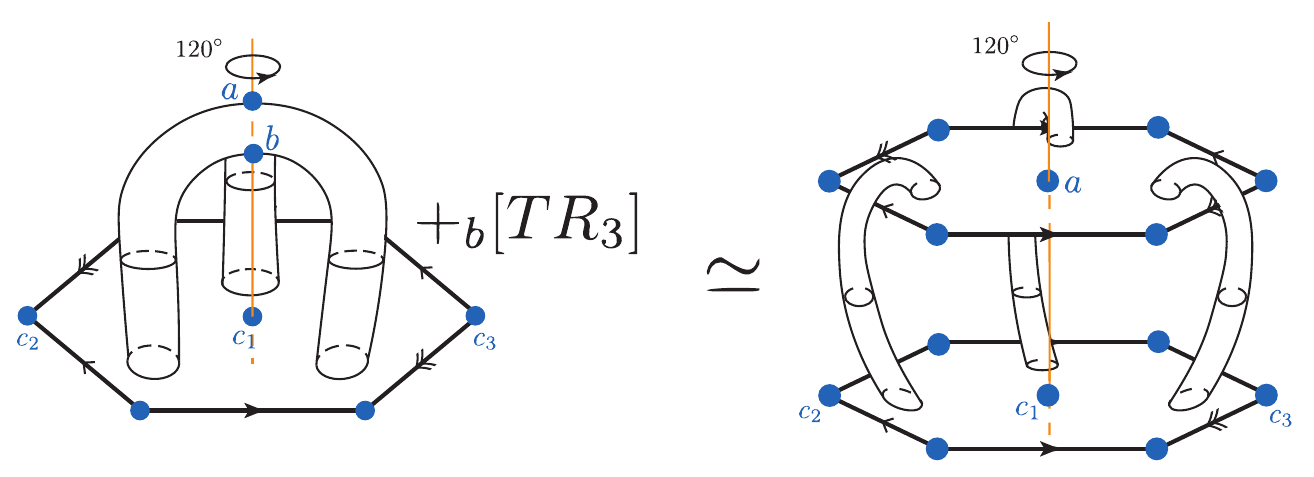}
\end{center}
\caption{\label{Bnsetup} Twisted ribbon surgery centered on $b$ yields the space $\Hex_2$.}
\end{figure}

Now that we have a new $C_3$-surface $\Hex_2$, we can construct surfaces of the form $\Hex_2+k[R_3]\#_3 M_g$ for some $k,g\geq 0$. This brings us back to our previous question. What if we performed twisted ribbon surgery on $\Hex_2+k[R_3]\#_3M_g$? Does the result depend on the chosen fixed point? Ultimately, the answer depends on $k$. When $k=0$, twisted ribbon surgery is independent of the chosen fixed point. This is not true when $k>0$ however. In this case there are two isomorphism classes of spaces which can be obtained by performing twisted ribbon surgery on $\Hex_2+k[R_3]\#_3M_g$. We prove these facts in Section \ref{nonfreeproof}. 

For now, let us examine this through the $k=1$, $g=0$ case. The space $\Hex_2+[R_3]$ is shown on the left of Figure \ref{B2toB3}. Performing twisted ribbon surgery centered on any point other than $b$ results in the space $\Hex_1+3[R_3]$. However $+_b[TR_3]$-surgery produces a different space which we will call $\Hex_3$ (the space on the right of Figure \ref{B2toB3}). 

In general, we can inductively define a space $\Hex_n$ by starting with the space $\Hex_{n-1}+[R_3]$ and performing twisted ribbon surgery centered on a specific fixed point. Just as $\Hex_3$ is represented in Figure \ref{B2toB3} as a tower of three hexagons, the space $\Hex_n$ for $n\geq 1$ can be thought of as a tower of $n$ hexagons connected in a similar way. 

An analogous collection of $C_p$-spaces (for $p>3$) can be defined and will be denoted $\Hex_n^p$ when the prime $p$ is not understood. The $C_p$-space $\Hex_n^p$ has $3n$ fixed points and $\beta$-genus $\beta=(3n-2)(p-1)$. We can again most easily visualize this space as a tower of $n$ polygons. 

\begin{example}[Free Klein Bottle]\label{freekleinbottle}
The representation sphere $S^{2,1}$ has two fixed points, so we can consider the space $S^{2,1}+2[FMB_3]:=\left(S^{2,1}+_N[FMB_3]\right)+_S [FMB_3]$ where we perform $+[FMB_3]$ surgery on both the north and south poles. The resulting space must be free of $\beta$-genus $\beta=2$. We denote this free Klein Bottle by $N_2^{\text{free}}$. 

Other free non-orientable surfaces can be constructed by performing equivariant connected sum surgery on $N_2^{\text{free}}$. We will see in the next section that up to isomorphism there is only one free action on $N_{2+3r}$ for each $r\geq 0$, namely
\[N_{2+3r}^{\text{free}}:=N_2^{\text{free}}\#_3N_r.\]
\end{example}

\begin{figure}
\begin{center}
\includegraphics[scale=.5]{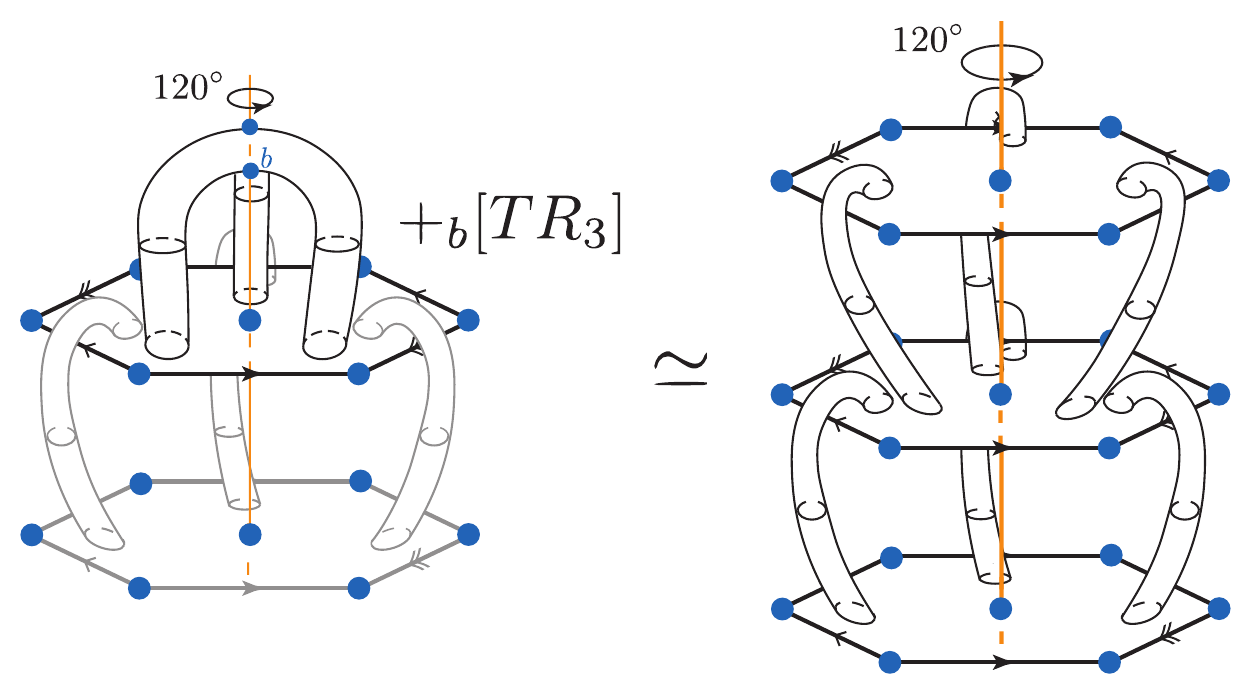}
\end{center}
\caption{\label{B2toB3} Twisted ribbon surgery centered on $b$ yields the space $\Hex_3$.}
\end{figure}
\end{example}

\section{Classifying $C_p$ Actions}\label{thmsection}

In this section we state the main classification theorem for nontrivial, closed surfaces with an action of $C_p$ for any odd prime $p$. All surfaces are defined up to an action of $\text{Aut}(C_p)$ on each of the surgery pieces. 

The proof of the classification of free $C_p$-surfaces can be found in Section \ref{freeproof}, while the proof of the non-free case is in Section \ref{nonfreeproof}.

\begin{lemma}\label{euler characteristic}
Let $X$ be a surface with beta genus $\beta$. If $\sigma\colon X\rightarrow X$ is a $C_p$-action with $F$ fixed points, then $F\equiv 2-\beta\pmod{p}$.
\end{lemma}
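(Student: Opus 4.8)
The plan is to reduce the statement to the single congruence $F \equiv \chi(X) \pmod p$ for the ordinary Euler characteristic, and then to extract that congruence from the fact that the $C_p$-action is free away from its finitely many fixed points. First I would record the elementary identity $\chi(X) = 2 - \beta$ valid for every closed connected surface, orientable or not: with $\Z/2$-coefficients a closed connected surface has $H^0(X;\Z/2)\cong H^2(X;\Z/2)\cong \Z/2$ and $H^1(X;\Z/2)\cong (\Z/2)^{\beta}$, so $\chi(X) = 1 - \beta + 1 = 2-\beta$. Thus it suffices to show $F \equiv \chi(X)\pmod p$.

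The key geometric input is that $p$ is prime: the stabilizer of any point is a subgroup of $C_p$, hence either trivial or all of $C_p$, and a point has full stabilizer exactly when it is fixed by $\sigma$. So the action is free on the complement of the $F$ fixed points. Around each fixed point $x$ I would choose a small invariant open disk $D_x$ (possible since $x$ has a neighborhood isomorphic to $\R^2$ with a rotation action), with the $D_x$ pairwise disjoint, and set $X' = X\setminus \coprod_x D_x$. Then $X'$ is a compact surface with $F$ boundary circles carrying a \emph{free} $C_p$-action, and an inclusion--exclusion count (gluing $F$ closed disks back along $F$ circles, with $\chi(\text{disk})=1$ and $\chi(S^1)=0$) gives $\chi(X) = \chi(X') + F$.

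Next I would invoke multiplicativity of the Euler characteristic under finite coverings. Because the action on $X'$ is free and $p$ is prime, the quotient $X'/C_p$ is again a compact surface and $X'\to X'/C_p$ is a $p$-fold covering, so $\chi(X') = p\,\chi(X'/C_p)$. Combining this with the previous step yields $\chi(X) - F = p\,\chi(X'/C_p) \equiv 0 \pmod p$, that is $F \equiv \chi(X) = 2-\beta \pmod p$, which is the claim.

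The individual steps are standard; the part that most needs care is the bookkeeping around the fixed points, namely confirming that excising invariant disks leaves a genuinely free action with a manifold quotient and that this excision changes $\chi$ by exactly $F$. An alternative that avoids appealing to covering-space multiplicativity is to pick a $C_p$-equivariant triangulation of $X$: since the fixed points are isolated and $p$ is odd, no positive-dimensional simplex can be fixed (a fixed edge would be pointwise fixed, contradicting isolation, and an odd-order permutation cannot merely swap its vertices), so every positive-dimensional simplex lies in a free orbit of size $p$ while the only fixed cells are the $F$ fixed vertices; reading off $\chi(X)$ modulo $p$ then gives $\chi(X)\equiv F$ immediately. I expect the covering-space version to be the cleanest to write, with the equivariant-triangulation version as a fallback if one prefers to avoid citing multiplicativity of $\chi$.
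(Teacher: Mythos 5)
Your proposal is correct and follows essentially the same route as the paper: both arguments exploit that the action is free away from the fixed points, pass to the $p$-fold covering of the quotient, and use multiplicativity of the Euler characteristic to get $\chi(X)-F\equiv 0\pmod p$ with $\chi(X)=2-\beta$. The only cosmetic difference is that the paper excises the fixed points themselves (computing $\chi(X\setminus X^{C_p})=2-\beta-F$ directly) rather than invariant disks, so your extra bookkeeping with boundary circles, while valid, is not needed.
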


\begin{proof}
The space $X\setminus X^{C_p}$ is a free $C_p$-space with Euler characteristic $2-\beta-F$. Since the action is free, $X\setminus X^{C_p}\rightarrow(X\setminus X^{C_p})/C_p$ is a $p$-fold covering space. In particular, the Euler characteristic of $X\setminus X^{C_p}$ must be a multiple of $p$.
\end{proof}

\begin{theorem}\label{orientableclassification}
Let $X$ be a connected, closed, orientable surface with an action of $C_p$. Then $X$ can be constructed via one of the following surgery procedures, up to $\operatorname{Aut}(C_p)$ actions on each of the pieces:
\begin{enumerate}
	\item $M_{1+pg}^{\text{free}}:= M_1^{\text{free}}\#_pM_g$, $g\geq 0$
	\item $\Sph_{(p-1)k+pg}[2k+2]:=\left(S^{2,1}+k[R_p]\right)\#_pM_g$, $k,g\geq 0$
	\item $\Hex_{n,(3n-2)(p-1)/2+(p-1)k+pg}[3n+2k]:=\left(\Hex_n+k[R_p]\right) \#_pM_g$, $k,g\geq 0$, $n\geq 1$
\end{enumerate}
\end{theorem}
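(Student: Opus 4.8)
The plan is to separate the free case $F=0$ from the non-free case $F\geq 1$, mirroring the division between Sections~\ref{freeproof} and~\ref{nonfreeproof}. I would first record that, since $p$ is odd, $\sigma$ must preserve orientation: it acts on $H_2(X;\Z)\cong\Z$ by $\pm 1$, and $(-1)^p=-1\neq 1$ forces the action to be $+1$. Consequently $X/C_p$ is again a closed orientable surface and $X\to X/C_p$ is a regular $p$-fold cover, branched precisely over the images of the $F$ fixed points. This quotient picture is the backbone of both cases.

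For the free case, $X\to X/C_p$ is unbranched, so $\chi(X)=p\,\chi(X/C_p)$; in particular $\chi(X)\equiv 0 \pmod p$, which excludes $S^2$ and forces the quotient genus to be at least $1$. I would then invoke the classical classification of regular $C_p$-covers of a closed orientable surface: such covers are recorded by surjections $\pi_1(X/C_p)\twoheadrightarrow C_p$, and the combined action of the mapping class group of the base and of $\operatorname{Aut}(C_p)$ is transitive on these. Hence there is a unique free $C_p$-surface for each admissible quotient genus. Since $M_1^{\text{free}}\#_pM_g$ carries a free action whose quotient has genus $g+1$, the Euler-characteristic bookkeeping of Lemma~\ref{euler characteristic} shows these exhaust family~(1).

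For the non-free case I would run a reverse-surgery induction on $\beta(X)+F$ in two stages. \emph{Stage 1} strips off free handles: whenever $X$ contains an embedded handle in $X\setminus X^{C_p}$ which, together with its $p-1$ conjugates, forms a $\#_pM_1$ summand, I undo this connected sum to obtain $X'$ with $X\cong X'\#_pM_1$, lowering $\beta$ by $2p$ while fixing $F$. Iterating gives $X\cong X_0\#_pM_g$ in which $X_0$ has no free handles, equivalently its quotient is $S^2$. \emph{Stage 2} reduces the branched sphere $X_0$: given two fixed points, Proposition~\ref{EB in X} supplies a copy of $EB_p$ between them whose regular neighborhood is $R_{p,(i)}$ or $TR_{p,(i)}$ by Lemma~\ref{tube}, so I may perform $-[R_p]$ or $-[TR_p]$ surgery to decrease $F$ by $2$ or $1$ and to decrease $\beta$. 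These stay connected while $F\geq 3$, so the reduction runs down to $S^{2,1}$, the unique non-free branched cover of $S^2$ with $F=2$ (note $F=1$ is impossible, since a single nonzero local rotation number cannot sum to $0$). Reading the reduction backward exhibits $X_0$ as a sequence of forward $+[R_p]$ and $+[TR_p]$ surgeries on $S^{2,1}$, and re-attaching the $M_g$ summand via the commutation of $\#_p$ with ribbon surgery should produce families~(2) and~(3).

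The hard part is Stage~2, namely showing that every such rebuild organizes into one of the two normal forms rather than some sporadic surface, and that the procedure is insensitive to the many choices involved (distinct embedded $EB_p$'s give genuinely different $-[R_p]$ surgeries). The cleanest way to control this is to pass to monodromy data: a branched $C_p$-cover of $S^2$ is determined up to equivariant isomorphism by its multiset of nonzero local rotation numbers $\{c_1,\dots,c_F\}\subset\Z/p$ with $\sum_j c_j\equiv 0$, where the use of $C_p$ abelian collapses the mapping-class-group (Hurwitz) action to mere permutation and $\operatorname{Aut}(C_p)$ acts by simultaneous rescaling. Under this dictionary a $+[R_p]$ adds a balanced pair $\{c,-c\}$ while a $+[TR_p]$ splits one monodromy $c_x$ into two summing to $c_x$, so classifying the surfaces becomes the purely combinatorial problem of matching each admissible multiset, modulo balanced pairs, with the monodromy of an $S^{2,1}+k[R_p]$ (the balanced configurations) or a tower $\Hex_n+k[R_p]$ (the genuinely twisted ones, e.g.\ $\{c,c,-2c\}$ for $\Hex_1$). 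I expect the case analysis near $F=2$, where $-[R_p]$ can disconnect $X_0$ (cf.\ $(S^{2,1}\#_pM_1)-[R_p]\cong M_1\times C_p$), together with the choice between $-[R_p]$ and the parity-changing $-[TR_p]$, to be the most delicate point, and to be exactly what separates family~(2) from family~(3).
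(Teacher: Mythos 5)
Your free case follows the paper's own route (Propositions \ref{freestuff} and \ref{forientable}): the only difference is that you cite transitivity of the mapping class group on nonzero classes of $H^1$ as ``classical,'' where the paper actually proves it by exhibiting explicit symplectic matrices. Your non-free argument, however, genuinely diverges from the paper. The paper inducts on $F$ with a strengthened hypothesis (the classification \emph{together with} independence of $+[TR_p]$-surgery from the chosen fixed point), using Lemma \ref{tube} to find an $R_p$ or $TR_p$ through two fixed points, Lemma \ref{connectedness} and Proposition \ref{-[Rp] invariance} to undo and redo surgeries, and Lemmas \ref{M2k+[TAT]}, \ref{twoTR3isoclases} and Corollary \ref{twoTR3isoclasses2} to control the two possible outcomes of twisted surgery. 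You instead propose to split off the $M_g$ summand first and then classify branched $C_p$-covers of $S^2$ by their multisets of local rotation numbers --- essentially globalizing the invariant that the paper deploys only once, via \cite{Ding}, to separate $\Hex_2$ from $\Sph_{2(p-1)}[6]$ in Proposition \ref{hex2}.

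There are two genuine gaps. First, Stage 1 is asserted rather than proved: to write $X\cong X_0\#_p M_g$ with $X_0$ having sphere quotient, you must show the monodromy epimorphism can be normalized to vanish on all handle generators of the quotient (doable when $F\geq 1$, e.g.\ by point-pushing a branch point with nonzero monodromy, but an argument is required); the paper never needs this, since in its induction the $\#_pM_g$ summand emerges at the bottom from the free classification ($X-[R_p]\cong M_{1+pg}^{\text{free}}$ or $M_g\times C_p$). Second, and decisively, the combinatorial matching is deferred with ``I expect,'' and your dictionary is too coarse at exactly the delicate entry: $+_x[TR_p]$ does not split $c_x$ into an \emph{arbitrary} pair summing to $c_x$ --- $TR_{p,(i)}$ is a specific $C_p$-space whose two fixed points carry specific rotation numbers (for $p=3$ both equal, cf.\ the tuples $(g,\dots,g)$ versus $(g,g,g,g^2,g^2,g^2)$ in Proposition \ref{hex2}) --- and you never compute the data of $\Hex_n+k[R_p]$ for general $p$. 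This is not harmless bookkeeping: for $p=7$ the multiset $\{1,2,4\}\subset\Z/7$ is admissible (nonzero entries, sum $\equiv 0$), yet it is neither a union of balanced pairs $\{c,-c\}$ nor of the form $\{c,c,-2c\}$, even after global rescaling; so the dichotomy ``balanced configurations versus $\{c,c,-2c\}$-towers'' that you posit cannot simply be read off and must be reconciled with the actual fixed-point data of the $\Hex_n^p$. Verifying which splits twisted surgery realizes and that the two normal forms exhaust all cases is precisely where the paper spends its effort (Lemmas \ref{M1[3]+[TAT]}, \ref{twoTR3isoclases}, Corollary \ref{twoTR3isoclasses2}, Proposition \ref{hex2}); as written, your proposal is an outline whose core step is missing.
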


\begin{theorem}\label{nonorientableclassification}
Let $X$ be a connected, closed, non-orientable surface with an action of $C_p$. Then $X$ can be constructed via one of the following surgery procedures, up to $\operatorname{Aut}(C_p)$ actions on each of the pieces:
\begin{enumerate}
	\item $N_{2+pr}^{\text{free}}\cong N_2^{\text{free}}\#_pN_r$, $r\geq 0$
	\item $N_{2(p-1)k+pr}[2k+2]\cong\left(S^{2,1}+k[R_p]\right)\#_pN_r$, $r\geq 1$
	\item $N_{1+2(p-1)k+pr}[1+2k]\cong\left(N_1[1]+k[R_p]\right)\#_pN_r$, $k,r\geq 0$
\end{enumerate}
\end{theorem}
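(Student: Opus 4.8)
The plan is to organize the argument by the number of fixed points $F = F(X)$, handling the free case $F = 0$ and the non-free case $F \geq 1$ separately, as in the division between Sections~\ref{freeproof} and~\ref{nonfreeproof}. The guiding principle in each case is to use the inverse surgery operations $-[R_p]$, $-[TR_p]$, $+[MB_pF]$ and the reverse of $\#_p$ to strip $X$ down to one of the three minimal models $N_2^{\text{free}}$, $S^{2,1}$, or $N_1[1]$, recording the operations undone so that $X$ can be reassembled in the claimed normal form. Lemma~\ref{euler characteristic} does the arithmetic bookkeeping: it forces $F \equiv 2 - \beta(X) \pmod p$, which is exactly what pins the genus down to the values $2(p-1)k + pr$ and $1 + 2(p-1)k + pr$ once $F$ and the family are known. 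A useful preliminary observation is that, because $S^{2,1} + k[R_p]$ is orientable, in family (2) the non-orientability must come entirely from the $\#_p N_r$ factor (so $r \geq 1$), whereas $N_1[1]$ already carries a cross-cap at its unique fixed point; this is the source of the parity dichotomy below.

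For the free case I would first use that $p$ is odd, so $C_p$ has no index-$2$ subgroup and the orientation double cover cannot be subordinate to $X \to X/C_p$. Hence the quotient $B := X/C_p$ is non-orientable exactly when $X$ is, giving $B \cong N_{2+r}$ and a connected regular $p$-fold cover $X \to N_{2+r}$. Such covers correspond to surjections $\pi_1(N_{2+r}) \twoheadrightarrow C_p$, and since any homomorphism to $C_p$ kills the $\Z/2$ torsion in $H_1(B;\Z)$, every surjection factors through the free part of $H_1$. I would then show that all such surjections are equivalent up to the mapping class group of $N_{2+r}$ and $\operatorname{Aut}(C_p)$ --- concretely, that the $p$ conjugate cross-caps of $X$ can be split off one $\#_p N_1$ at a time --- identifying $X$ with the standard cover $N_2^{\text{free}} \#_p N_r$.

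For the non-free case I would induct on $F$. Given $F \geq 2$, Proposition~\ref{EB in X} and Lemma~\ref{tube} produce two fixed points joined by a copy of $EB_p$ whose neighborhood is $R_{p,(i)}$ or $TR_{p,(i)}$. The crucial lemma, recorded by the fact that families (2) and (3) are distinguished purely by the parity of $F$, is a twisted-ribbon absorption statement: in a non-orientable surface any $TR_{p,(i)}$-neighborhood can be traded for an $R_{p,(i)}$-neighborhood at the cost of a connected sum into $N_r$ (the equivariant avatar of Dyck's relation $M_1 \# N_1 \cong N_3$), so that no $\Hex_n$-type block appears and every reduction lowers $F$ by exactly $2$. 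Undoing $-[R_p]$ repeatedly --- staying connected since the surface has at least three fixed points at each reduction step --- brings $X$ to a non-orientable base surface with $F = 2$ (even case) or $F = 1$ (odd case). For $F = 2$ the complement of an $R_p$-neighborhood of the two fixed points is a free region bounded by $p$ conjugate circles, which the same conjugate-genus-splitting argument as in the free case reduces to $p$ disks, yielding $X \cong S^{2,1} \#_p N_r$; for $F = 1$ an analogous analysis around the single cone point, using $+[MB_pF]$ to normalize the cross-cap at the fixed point, yields $X \cong N_1[1] \#_p N_r$. Finally I rebuild, using that $\#_p$ commutes with $\pm[R_p]$ and $\pm[TR_p]$.

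The step I expect to be the main obstacle is the twisted-ribbon absorption lemma together with the conjugate-genus-splitting used in the base cases: I must show that every cross-cap and every twist can be normalized into the single pattern $S^{2,1} + k[R_p]$ or $N_1[1] + k[R_p]$, so that the three orientable families genuinely collapse to two non-orientable ones separated only by the parity of $F$. Controlling connectivity during the reductions is a related technical hazard --- the Remark following the definition of $X - [R_p]$ shows that $-[R_p]$ can disconnect the surface when $F = 2$, so I must be careful to stop the induction at $F \in \{1,2\}$ rather than continuing to the free case. By comparison, the free case and the genus arithmetic supplied by Lemma~\ref{euler characteristic} should be routine once these structural lemmas are in hand.
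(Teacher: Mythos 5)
Your plan has the same skeleton as the paper's proof: the free case is handled by identifying free $C_p$-spaces over $B=X/C_p$ with nonzero orbits of $H^1(B;\Z/p)$ under $\operatorname{Aut}(B)$ (Propositions \ref{freestuff}, \ref{freenonor}, \ref{kbottle}), the non-free case is an induction on $F$ driven by Lemma \ref{tube}, the base cases are $F\in\{1,2\}$ with $F=1$ resting on a classification of free actions on the bounded surface $\tilde{N}_{pr+1}$, and Lemma \ref{euler characteristic} does the genus arithmetic. The genuine difference is your treatment of $TR_p$. The paper reduces by whichever of $-[R_p]$ or $-[TR_p]$ Lemma \ref{tube} happens to produce, and to control the rebuild it strengthens the inductive hypothesis to carry a second clause --- that $+[TR_p]$-surgery on a non-orientable surface is independent of the chosen fixed point --- and then converts between families (2) and (3) using the explicit trading isomorphisms of Lemmas \ref{N1[1]+[AT]}, \ref{N2free+[AT]} and \ref{N1[1]+[TAT]}. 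You instead frontload an ``absorption lemma'' asserting that in a non-orientable $C_p$-surface any pair of fixed points can be joined by an $EB_p$ whose neighborhood is $R_p$, so that every reduction is a $-[R_p]$ and every rebuild uses only the unambiguous $+[R_p]$ operation (Corollary \ref{independence of disk choice}, Proposition \ref{-[Rp] invariance}); no $TR_p$ bookkeeping is needed in the induction at all. This lemma is true, and it is provable before the classification: the neighborhood type of the lift of an embedded arc in the quotient is governed by whether the local rotations at its two endpoints agree or are inverse after transport along the arc (this is the same invariant the paper uses in Proposition \ref{hex2}), and rerouting the arc through an orientation-reversing loop --- which exists in the free part of the quotient precisely because $X$ is non-orientable --- flips the type from $TR_p$ to $R_p$. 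So your mechanism is sound, and it buys a cleaner inductive step; the cost is that the absorption lemma is exactly where non-orientability must be exploited, and it needs a genuine proof of the sort just sketched, whereas the paper only recovers its content (the final Corollary on $-[TR_p]$) as a consequence of the finished classification.

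Two places where your plan is substantially thinner than what is required. First, the $F=1$ base case: your ``analogous analysis around the single cone point'' is the paper's Proposition \ref{free with boundary}, and it is not routine. The orbit computation for $H^1(\tilde{N}_{n+1};\Z/p)/\operatorname{Aut}(\tilde{N}_{n+1})$ yields more nonzero orbits than there are free actions on $\tilde{N}_{pn+1}$ (at most $(p+1)/2$ orbits for $n\geq 2$, $p-1$ for $n=1$, versus $(p-1)/2$ actions), so one must additionally construct the covers realizing the surplus orbits and observe that their total spaces have more than one boundary circle, hence are not $\tilde{N}_{pn+1}$; without this step you cannot conclude uniqueness of the action near the fixed point, which is what forces $X\cong N_1[1]\#_pN_r$. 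Second, your $F=2$ base case implicitly requires the identification $N_2^{\text{free}}+[R_p]\cong S^{2,1}\#_pN_2$ (Lemma \ref{N2free+[AT]}) in the connected subcase, and the disconnected subcase $X-[R_p]\cong N_{r'}\times C_p$ must be treated separately --- you flag the disconnection hazard but fold both into one phrase. Neither issue is a wrong turn; both are real pieces of work your outline defers.
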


\begin{remark}\label{badnews}
It is important to note that for orientable surfaces, $\beta$ and $F$ do not provide enough information to distinguish between these families of isomorphism classes. For example, when $p=3$, $\Hex_{2,4}[6]$ and $\Sph_{4}[6]$ are non-isomorphic orientable surfaces with $\beta=4$ and $F=6$. See Proposition \ref{hex2} for a proof of this fact. 

In the case of non-orientable surfaces, $F$ and $\beta$ do distinguish between these families. In other words, given a non-orientable surface $X$ with specific values for $F$ and $\beta$, one can explicitly determine how $X$ was constructed via equivariant surgeries.
\end{remark}

Some examples of spaces in each of these families are shown in the case $p=3$ in Figures \ref{freespaces}, \ref{nonfreeorientable}, \ref{nonfreenonorientable}. 

\begin{figure}
\begin{center}
\includegraphics[scale=.3]{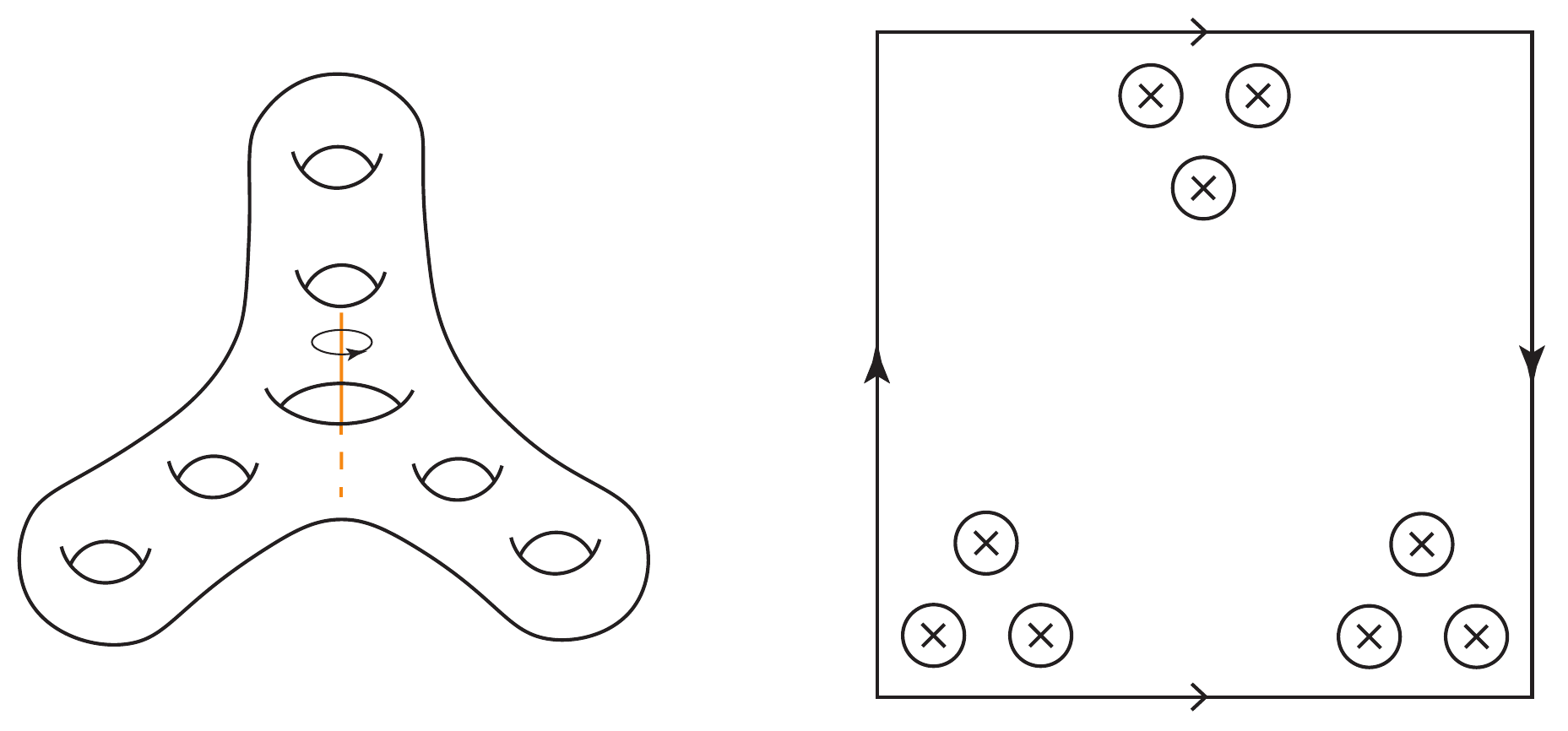}
\end{center}
\caption{\label{freespaces} The $C_3$-spaces $M_7^{\text{free}}$ (left) and $N_{11}^{\text{free}}$ (right).}
\end{figure}

\begin{figure}
\begin{center}
\includegraphics[scale=.4]{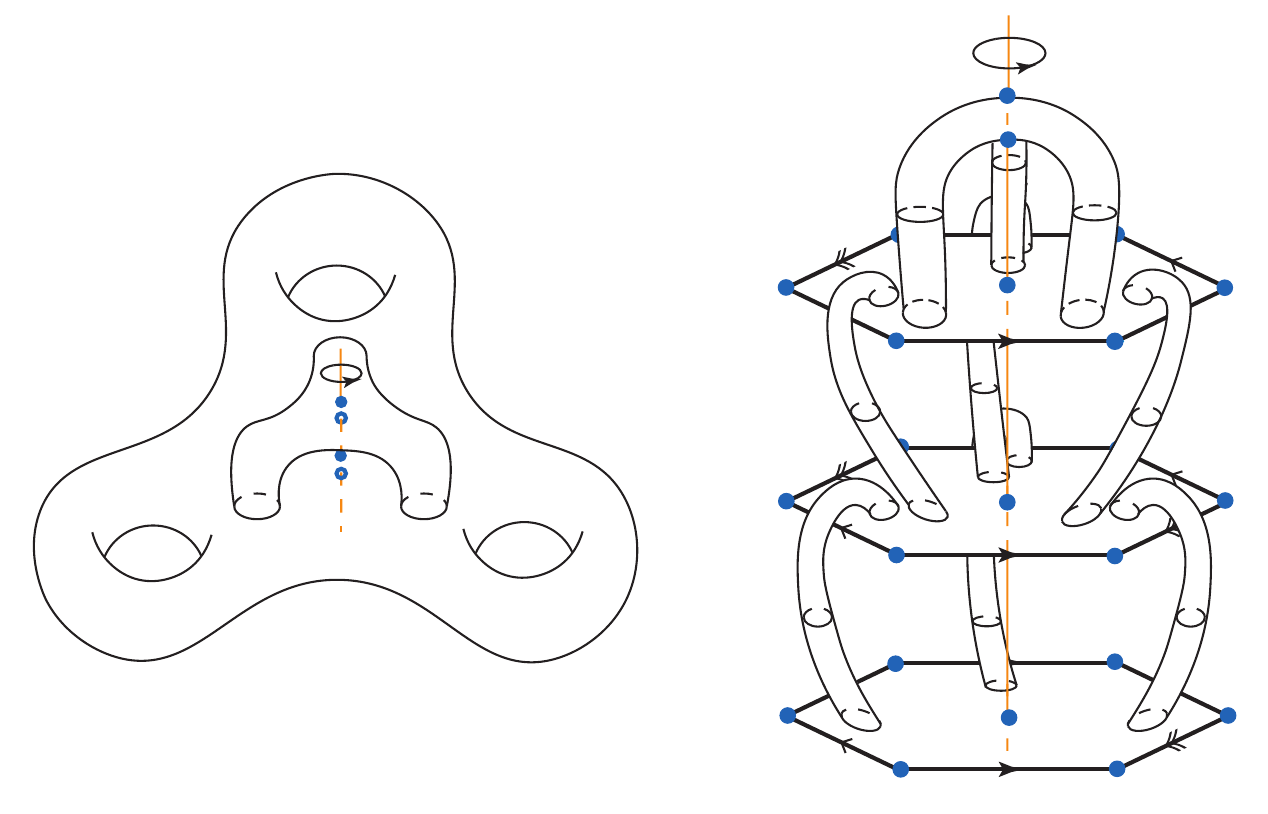}
\end{center}
\caption{\label{nonfreeorientable} The non-free $C_3$-spaces $S^{2,1}+[R_3]\#_3M_1$ (left) and $\Hex_3+[R_3]$ (right). Fixed points are shown in blue.}
\end{figure}

\begin{figure}
\begin{center}
\includegraphics[scale=.5]{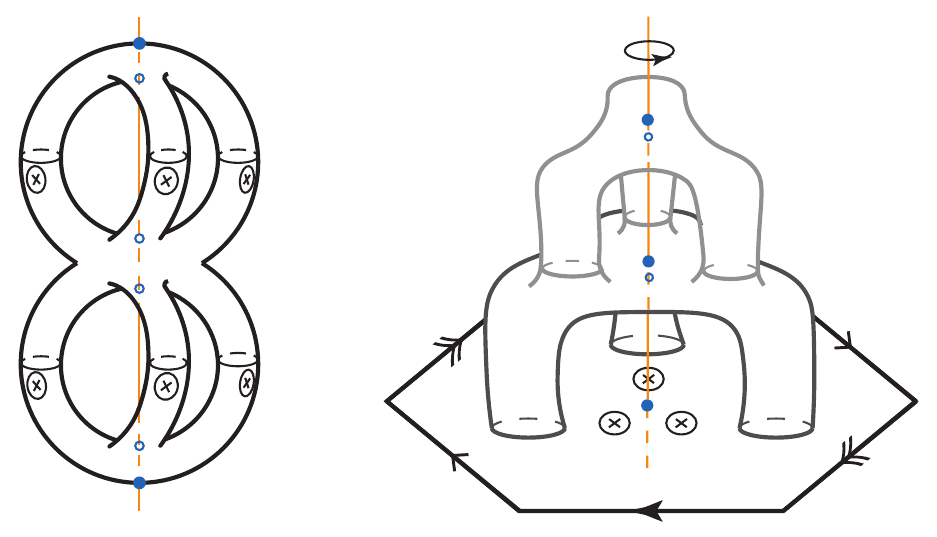}
\end{center}
\caption{\label{nonfreenonorientable} The $C_3$-spaces $ S^{2,1}+2[R_3]\#_3N_2$ (left) and $N_1[1]+2[R_3]\#_3N_1$ (right).}
\end{figure}

\section{Surgery Invariance Results}\label{invarianceresults}

Let $p$ be an odd prime. This section contains proofs for some of the basic surgery invariance results outlined in Section \ref{surgintro}.

\begin{proposition}\label{EB in X}
Let $X$ be a closed, connected 2-manifold with a map $\sigma\colon X\rightarrow X$ such that $\sigma^p=1$. Let $a,b\in X\setminus X^{C_p}$ such that $a\neq \sigma^k b$ for any $k$. Then there exists a simple path $\alpha$ in $X$ from $a$ to $\sigma^k b$ for some $k$ such that $\alpha$ does not intersect any of its conjugate paths. In other words, $\alpha(s)\neq \sigma^k\alpha(t)$ for all $k$, $s$, and $t$ ($k\neq 0$ if $s=t$).
\end{proposition}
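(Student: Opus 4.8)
The plan is to pass to the orbit space and obtain $\alpha$ by lifting an embedded arc through the covering map over the free part. Since $\sigma^p=1$ and a non-fixed point $a$ exists, $\sigma$ is a homeomorphism of order exactly $p$, so $C_p=\langle\sigma\rangle$ acts on $X$. As noted earlier, each fixed point has a neighborhood on which the action is a rotation; hence $X^{C_p}$ is a finite set of isolated points, and the quotient $Y:=X/C_p$ is again a closed surface in which the image $B:=q(X^{C_p})$ of the fixed set is finite, where $q\colon X\to Y$ is the quotient map. Away from the fixed points the action is free and properly discontinuous, so $q$ restricts to an honest $p$-fold covering map $q\colon X\setminus X^{C_p}\to Y\setminus B$. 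Justifying precisely this orbit-space structure, namely that $Y$ is a manifold and $B$ is a finite branch locus, is the delicate point of the argument; I would address it by invoking the local rotation model at each fixed point (so that the local quotient is a cone, i.e.\ a disk) together with compactness of $X$ to get finiteness of $X^{C_p}$.

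First I would produce the path downstairs. Because $a$ and $b$ are not fixed and lie in distinct orbits (that is, $a\neq\sigma^kb$ for all $k$), their images $q(a)$ and $q(b)$ are two distinct points of $Y\setminus B$. Removing the finite set $B$ from the connected surface $Y$ leaves a connected $2$-manifold, so $q(a)$ and $q(b)$ can be joined by an embedded (simple) arc $\bar\alpha$ lying entirely in $Y\setminus B$.

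Next I would lift. Using the path-lifting property of the covering $q$, let $\alpha$ be the lift of $\bar\alpha$ with $\alpha(0)=a$. Its endpoint lies in the fiber $q^{-1}(q(b))=\{\,\sigma^k b : 0\le k\le p-1\,\}$, so $\alpha$ runs from $a$ to $\sigma^k b$ for some $k$. Since $\bar\alpha$ is an embedded arc, in particular contractible, the cover is trivial over its image: $q^{-1}(\bar\alpha)$ is a disjoint union of exactly $p$ arcs, namely the conjugates $\alpha,\sigma\alpha,\dots,\sigma^{p-1}\alpha$. Distinct sheets of a trivial cover are disjoint and $q$ restricts to a homeomorphism on each, so $\alpha$ is simple and meets none of its conjugates. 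Concretely, if $\alpha(s)=\sigma^k\alpha(t)$ then $\bar\alpha(s)=\bar\alpha(t)$, forcing $s=t$ by injectivity of $\bar\alpha$, and hence $k=0$; this is exactly the claimed non-self-intersection condition. Taking the union of the conjugate arcs then exhibits the copy of $EB_p$ referenced in the surrounding discussion.

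The main obstacle, as flagged above, is not the covering-space bookkeeping (which is routine once the setup is in place) but rather establishing cleanly that $q$ is a branched covering with finite branch set; everything else follows formally from triviality of a covering over a contractible arc. If one prefers to avoid appealing to the global manifold structure of $Y$, an alternative is to work directly in $X$: build a simple arc from $a$ to the $b$-orbit, then use the local rotation charts around the finitely many fixed points and a general-position argument to push it off its conjugates, but this reproduces the same content less economically than the orbit-space approach.
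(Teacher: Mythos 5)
Your proposal is correct and takes essentially the same approach as the paper: choose an embedded arc in the quotient of the free part $\left(X\setminus X^{C_p}\right)/C_p$ (your $Y\setminus B$) joining the images of $a$ and $b$, then lift it through the $p$-fold covering and take the lift starting at $a$, whose conjugates form the disjoint preimage. The paper's proof is precisely this argument stated more tersely; your extra care about the branched-covering structure and the triviality of the cover over a contractible arc just makes explicit what the paper leaves implicit.
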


\begin{proof}
    Choose an embedded path $\alpha$ in $\left(X\setminus X^{C_p}\right)/C_p$ from the image of $a$ to the image of $b$. The preimage of $\alpha$ in $X\setminus X^{C_p}$ consists of $p$ disjoint conjugate paths from $\sigma^ia$ to $\sigma^jb$. In particular, there is a component of this preimage which is a path from $a$ to $\sigma^kb$ for some $k$ with the desired property.
\end{proof}

\begin{corollary}\label{independence of disk choice}
Let $X$ be a path-connected, closed 2-manifold with a $C_p$ action. Let $Y_1$ be obtained from $X$ by removing disjoint conjugate disks embedded in $X\setminus X^{C_p}$ and sewing in a $C_p$-ribbon. Let $Y_2$ be similarly obtained from $X$, but using a different set of conjugate embedded disks. Then $Y_1\cong Y_2$.
\end{corollary}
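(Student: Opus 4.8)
The goal is to show that the result of ribbon surgery does not depend on which set of conjugate disks we remove. The plan is to reduce the general statement to the case of two disk-configurations that differ in a controlled way, and then to exhibit an explicit equivariant homeomorphism between the two resulting surfaces. First I would set up notation: let $D$ and $D'$ be the two base disks in $X\setminus X^{C_p}$ whose conjugate families $\{\sigma^iD\}$ and $\{\sigma^iD'\}$ are used to build $Y_1$ and $Y_2$ respectively. Since removing a disk and its conjugates and sewing in the same ribbon $R_{p,(i)}$ is a local operation, the two surfaces $Y_1$ and $Y_2$ agree away from neighborhoods of these conjugate families, so it suffices to produce an ambient equivariant isotopy of $X$ carrying the configuration $\{\sigma^iD\}$ onto $\{\sigma^iD'\}$.

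The key geometric input is the isotopy-extension philosophy for surfaces, applied equivariantly. I would first reduce to showing that any two embedded disks $D, D'$ in the connected free quotient $(X\setminus X^{C_p})/C_p$ are ambiently isotopic there; this is the standard fact that a connected surface is \emph{homogeneous}, i.e. its homeomorphism group acts transitively on embedded disks. Having such an isotopy downstairs, I would lift it through the covering map $X\setminus X^{C_p}\to (X\setminus X^{C_p})/C_p$ to obtain an equivariant ambient isotopy of $X\setminus X^{C_p}$ carrying $D$ to $D'$ and hence $\{\sigma^iD\}$ to $\{\sigma^iD'\}$. This uses exactly the same covering-space machinery that powers Proposition \ref{EB in X}: a path/isotopy in the base lifts uniquely once a starting point is chosen, and the $C_p$-action permutes the sheets, so the lift is automatically $C_p$-equivariant. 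Because the isotopy can be taken with support away from $X^{C_p}$ (the disks live in $X\setminus X^{C_p}$), it extends by the identity to an equivariant self-homeomorphism of all of $X$.

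Once the self-homeomorphism $h\colon X\to X$ with $h(\sigma^iD)=\sigma^iD'$ is in hand, I would use it to identify the surgered pieces. The map $h$ restricts to an equivariant homeomorphism $\tilde X_1\to \tilde X_2$ of the complements, matching boundary components. To glue this to a homeomorphism of the full surgered surfaces, I need the two gluing maps $\tilde f_1,\tilde f_2$ of the ribbon boundary to the respective disk boundaries to be compatible up to an equivariant self-homeomorphism of the ribbon $R_{p,(i)}$. This is where I expect the main obstacle: one must check that any two equivariant identifications $\partial R_{p,(i)}\to \partial \tilde X$ differ by an equivariant homeomorphism of $R_{p,(i)}$ that is compatible with $h$ on the boundary. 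Concretely, the ambiguity in the gluing map of a single boundary circle is a choice of equivariant parametrization of $S^1$ under rotation by $e^{2\pi i/p}$; any two such differ by an equivariant rotation, and such a rotation extends over the ribbon. I would therefore finish by composing $h$ on the complement with the appropriate equivariant self-homeomorphism of $R_{p,(i)}$ extending the boundary discrepancy, obtaining a globally defined equivariant homeomorphism $Y_1\to Y_2$.

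The crux of the argument is thus twofold: lifting a quotient-level isotopy equivariantly (routine via covering-space theory, analogous to Proposition \ref{EB in X}), and verifying that the residual boundary-gluing ambiguity is absorbed by equivariant self-homeomorphisms of the ribbon. The second point is the delicate one, since it requires understanding the equivariant mapping class data of $R_{p,(i)}$ on its boundary; I would handle it by reducing to the single-circle case, where an equivariant boundary parametrization is determined up to rotation, and observing that rotations of $\partial R_{p,(i)}$ extend to equivariant homeomorphisms of $R_{p,(i)}$ fixing the two poles. I expect the write-up to emphasize that all isotopies and homeomorphisms are supported away from $X^{C_p}$, which is what makes the equivariant lifting clean.
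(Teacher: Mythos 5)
Your overall architecture---push the problem down to the free quotient, use homogeneity of surfaces to get an ambient isotopy carrying one disk to the other, and lift it equivariantly through the covering $X\setminus X^{C_p}\to (X\setminus X^{C_p})/C_p$---is sound, and it is essentially the paper's argument with the order of operations reversed: the paper obtains a path between the disk centers that avoids its conjugates by exactly this lift-from-the-quotient device (Proposition \ref{EB in X}) and then performs the disk slide \emph{upstairs} along the conjugate paths, whose disjoint supports make the slide equivariant (this is the ``procedure of Corollary A.3 of \cite{Dug19}'' the paper cites), whereas you perform the slide \emph{downstairs} and then lift the whole isotopy. Both hinge on the same two facts: connectedness of the quotient and uniqueness of lifts.

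The gap is in the step you yourself flag as the crux, the boundary-gluing ambiguity, and it comes from a misreading of the $C_p$-action on $\partial R_{p,(i)}$. The disks removed from $S^{2,1}_{(i)}$ to form the ribbon lie in the free part, so the $p$ boundary circles of $R_{p,(i)}$ are \emph{freely permuted} by $C_p$: no single boundary circle carries a rotation action, and an equivariant identification $\partial\tilde{X}\to\partial R_{p,(i)}$ is determined by an \emph{arbitrary} homeomorphism on one circle, extended to the orbit by conjugation. (The picture you describe, a circle with rotation by $e^{2\pi i/p}$, is the boundary situation for twisted ribbon surgery $+[TR_p]$ and for M\"{o}bius band surgery, not for $+[R_p]$.) Consequently the discrepancy $\tilde{f}_2\circ h\circ \tilde{f}_1^{-1}$ is not ``an equivariant rotation'': restricted to one circle it is an arbitrary homeomorphism, possibly orientation-reversing, and your extension argument does not cover this case. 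The repair is standard but has to be stated: if the discrepancy is orientation-preserving on each circle, it is isotopic to the identity, and the isotopy can be inserted into an equivariant collar of $\partial R_{p,(i)}$ (do it in the collar of one circle and transport by the action to the others); if it is orientation-reversing, first compose with the equivariant equatorial reflection of $R_{p,(i)}$ (choosing the removed disks symmetric about the equator), which reverses orientation on every boundary circle, and then apply the collar argument. With that correction your proof goes through and proves exactly what the paper delegates to Dugger's Corollary A.3.
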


\begin{proof}
Let $D_i, \sigma D_i,\dots ,\sigma^{p-1}D_i$ be the names of the disjoint disks removed to make $Y_i$ from $X$. Let $a_i$ denote the center of $D_i$. Then by Proposition \ref{EB in X} there is a path $\alpha$ from $a_1$ to $\sigma^k a_2$ for some $k$ that does not intersect its conjugate paths. From here, we can obtain an equivariant homeomorphism $X\rightarrow Y$ by following a nearly identical procedure to the proof of Corollary A.3 in \cite{Dug19}.
\end{proof}

\begin{proposition}\label{welldefined}
Let $X$ be a path-connected, closed 2-manifold with a $C_p$ action, and let $M$ be a non-equivariant connected surface. The equivariant isomorphism type of $X\#_p M$ is independent of the choice of disks used in the construction.
\end{proposition}

The proof of this proposition is nearly identical to that of Corollary \ref{independence of disk choice}.

\begin{proposition}\label{-[Rp] invariance}
Let $X$ and $Y$ be equivariant 2-manifolds that both contain a $C_p$-ribbon. If $X-[R_p]\cong Y-[R_p]$, then $X\cong Y$.
\end{proposition}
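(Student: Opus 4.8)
The plan is to exploit the fact that $-[R_p]$ and $+[R_p]$ are mutually inverse operations, together with the key input of Corollary \ref{independence of disk choice} that $+[R_p]$-surgery does not depend on the choice of disks. The crux is a reconstruction identity: whenever $X$ contains a ribbon $R_{p,(i)}$, one has $(X - [R_p]) + [R_{p,(i)}] \cong X$. Indeed, if we perform $-[R_p]$ by excising $R_{p,(i)}$ and sewing in $p$ conjugate disks $D^2 \times C_p$, then performing $+[R_{p,(i)}]$-surgery back along precisely those disks with the inverse gluing datum literally undoes the cut and returns $X$. Since by Corollary \ref{independence of disk choice} the isomorphism type of $+[R_{p,(i)}]$ is insensitive to which conjugate disks are used, this reconstruction holds for surgery performed along any conjugate disks in the free part of $X - [R_p]$.

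First I would fix a $C_p$-ribbon $R_{p,(i)} \subseteq X$ and $R_{p,(i)} \subseteq Y$ of the common type used to define the two $-[R_p]$-surgeries (as the notation $-[R_p]$ presumes a fixed understood ribbon), and let $\phi \colon X - [R_p] \to Y - [R_p]$ be the given equivariant isomorphism. Write $\{D^2 \times C_p\}$ for the conjugate disks created in $X$ by its $-[R_p]$-surgery. Transporting these disks through $\phi$ yields $p$ conjugate disks sitting in the free part of $Y - [R_p]$. Extending $\phi$ by the identity on a glued-in copy of $R_{p,(i)}$ then produces an equivariant isomorphism
\[
X \;\cong\; (X - [R_p]) + [R_{p,(i)}] \;\cong\; (Y - [R_p]) + [R_{p,(i)}],
\]
where on the right the surgery is carried out along the transported disks $\phi(\{D^2\times C_p\})$; the first isomorphism is the reconstruction identity and the second is the $\phi$-transport.

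To finish, I would invoke Corollary \ref{independence of disk choice} once more: the space $(Y - [R_p]) + [R_{p,(i)}]$ is independent of which conjugate disks of $Y - [R_p]$ are used, so in particular performing the surgery along the disks created when forming $Y - [R_p]$ gives the same isomorphism type. By the reconstruction identity applied to $Y$, that space is $Y$, whence $X \cong Y$.

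The step I expect to be the main obstacle is making the reconstruction identity $(X - [R_p]) + [R_{p,(i)}] \cong X$ fully rigorous — in particular, checking that the boundary isomorphism $f$ used to sew in $D^2 \times C_p$ can be matched with a gluing isomorphism for $R_{p,(i)}$ so that the two surgeries compose to the identity, and that the answer is insensitive to this boundary datum and not merely to the disk centers. I would handle this exactly as in the proof of Corollary \ref{independence of disk choice}, tracing an equivariant path as in Proposition \ref{EB in X} to carry any two admissible surgery configurations onto one another. A secondary bookkeeping point is the ribbon index: the argument needs the ribbon removed from $X$ and the ribbon sewn back into $Y - [R_p]$ to share the index $i$. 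Since the whole classification is performed up to the $\operatorname{Aut}(C_p)$-action permuting the $R_{p,(i)}$, this matching is precisely the convention packaged into the symbol $-[R_p]$, and it is what allows the final isomorphism to be stated without decorating indices.
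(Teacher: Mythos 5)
Your proposal is correct and is essentially the paper's own argument: the paper simply defers to Proposition 3.11 of \cite{Dug19}, whose proof is exactly your chain $X \cong (X-[R_p])+[R_p] \cong (Y-[R_p])+[R_p] \cong Y$, with the middle isomorphism given by transporting the surgery disks through the given isomorphism and the outer ones justified by the reconstruction identity together with Corollary \ref{independence of disk choice}. The one caveat (shared by the paper's citation-level proof) is that Corollary \ref{independence of disk choice} is stated only for path-connected surfaces, whereas $X-[R_p]$ can be disconnected of the form $M\times C_p$ --- a case the paper genuinely needs, e.g.\ when it applies this proposition with $X-[R_p]\cong C_p\times N_1$ in Lemma \ref{N1[1]+[TAT]} --- so a complete write-up should observe that the disk-sliding argument still goes through there by working in a single component and extending equivariantly.
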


An analogous statement and proof of this fact for the $p=2$ case can be found in Proposition 3.11 of \cite{Dug19}.

\section{Free Classification Proof}\label{freeproof}

In order to prove Theorems \ref{orientableclassification} and \ref{nonorientableclassification}, we will induct on the number of fixed points of a given $C_p$-surface. In this section we prove the base case for this argument, that every closed surface with a free $C_p$ action is either isomorphic to $M_{1+pg}^{\text{free}}$ for some $g$, to $N_{2+pr}^{\text{free}}$ for some $r\geq 1$, or to $N_2$ with one of its $(p-1)/2$ free $C_p$-actions. 

Let $X$ be a path-connected non-equivariant space. Let $\mathcal{S}_p(X)$ denote the set of isomorphism classes of free $C_p$-spaces $Y$ that are path-connected and have the property that $Y/C_p \cong X$.

\begin{proposition}\label{freestuff}
There is a bijection between $\mathcal{S}_p(X)$ and the set of nonzero orbits in $H^1_{\text{sing}}(X;\Z/p)/\operatorname{Aut}(X)$.
\end{proposition}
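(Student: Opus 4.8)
The plan is to interpret a free $C_p$-space $Y$ with $Y/C_p \cong X$ as a connected regular $C_p$-covering of $X$ (equivalently, a connected principal $C_p$-bundle), and then run the standard monodromy classification of such coverings, keeping careful track of how the two pieces of ambiguity in the definition of $\mathcal{S}_p(X)$ — the choice of identification $Y/C_p \cong X$, and the freedom to allow equivariant isomorphisms that do not cover $\operatorname{id}_X$ — combine into a single action of $\operatorname{Aut}(X)$.

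First I would fix a basepoint $x_0 \in X$ and, for a given $Y$ together with a chosen identification $q\colon Y/C_p \xrightarrow{\sim} X$, define the monodromy homomorphism $\phi_Y\colon \pi_1(X, x_0) \to C_p$. Concretely, the fiber $q^{-1}(x_0)$ is a $C_p$-torsor (the action is free and transitive because $Y$ is connected and the covering has degree $p$), and lifting loops gives a homomorphism into the group $C_p$ of torsor automorphisms; because $C_p$ is abelian this is independent of the chosen point in the fiber. The fixed generator $\sigma_Y$ of the action pins down the identification $C_p \cong \mathbb{Z}/p$, so $\phi_Y$ is an element of $\operatorname{Hom}(\pi_1(X, x_0), \mathbb{Z}/p)$. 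By the universal coefficient theorem (the $\operatorname{Ext}$ term vanishes since $H_0$ is free) this group is canonically $H^1_{\text{sing}}(X; \mathbb{Z}/p)$, and $Y$ is path-connected exactly when $\phi_Y$ is surjective, i.e.\ nonzero, since $p$ is prime.

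Next I would show the assignment descends to the stated bijection. Standard covering theory gives that two such coverings are isomorphic over $\operatorname{id}_X$ by a $C_p$-equivariant, generator-preserving homeomorphism if and only if their monodromies coincide; so rigidifying (fixing the identification of the quotient with $X$ and allowing only isomorphisms over $\operatorname{id}_X$) already yields a bijection between these rigidified objects and nonzero classes in $H^1(X; \mathbb{Z}/p)$. The content of the proposition is then the bookkeeping that an element of $\mathcal{S}_p(X)$ remembers neither the identification $Y/C_p \cong X$ nor is rigid against equivariant isomorphisms covering a nontrivial self-homeomorphism of $X$. Both ambiguities are realized by precomposing with some $h \in \operatorname{Aut}(X)$, which alters the monodromy class by the pullback $h^*$ on $H^1(X; \mathbb{Z}/p)$ (the action factoring through the induced action on cohomology). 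Hence the fibers of $Y \mapsto [\phi_Y]$ are precisely the $\operatorname{Aut}(X)$-orbits, giving the bijection between $\mathcal{S}_p(X)$ and the nonzero orbits of $H^1_{\text{sing}}(X; \mathbb{Z}/p)/\operatorname{Aut}(X)$.

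The step I expect to require the most care — and the main obstacle — is confirming that the fixed generator removes any $\operatorname{Aut}(C_p)$-ambiguity while the only surviving identification ambiguity is genuinely $\operatorname{Aut}(X)$ and nothing more. In particular I would verify that a $C_p$-equivariant homeomorphism $f\colon Y_1 \to Y_2$ with $f \circ \sigma_{Y_1} = \sigma_{Y_2} \circ f$ descends to a well-defined $\bar f \in \operatorname{Aut}(X)$, and that, upon factoring $f$ as an isomorphism over $\operatorname{id}_X$ onto the pullback $\bar f^* Y_2$, the generator-intertwining condition forces $\phi_{Y_1} = \bar f^* \phi_{Y_2}$ with no automorphism of $C_p$ inserted. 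This is exactly where the abelianness of $C_p$ and the role of $\sigma$ as a chosen generator are essential; the remainder is the standard dictionary between coverings of $X$ and $\pi_1(X)$-sets.
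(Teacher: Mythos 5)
Your proposal is correct and takes essentially the same route as the paper: the paper's (sketched) proof likewise turns $Y\in\mathcal{S}_p(X)$ into a principal $\Z/p$-bundle over $X$ by choosing an identification $Y/C_p\cong X$, classifies it by its characteristic class in $H^1_{\text{sing}}(X;\Z/p)$ (which is exactly your monodromy homomorphism under the identification $H^1(X;\Z/p)\cong\operatorname{Hom}(\pi_1(X),\Z/p)$), and quotients by $\operatorname{Aut}(X)$ to make the assignment well-defined. Your write-up simply supplies the details the paper defers to Dugger's $p=2$ argument, including the worthwhile check that the generator-preserving definition of isomorphism prevents any $\operatorname{Aut}(C_p)$-twist from entering.
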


An analogous proof of this fact for the $p=2$ case is provided in \cite{Dug19}, but we will summarize the main idea here. Given an element $Y$ of $\mathcal{S}(X)$, we get a principal $\mathbb{Z}/p$ bundle $Y\rightarrow X$ by choosing an isomorphism $Y/C_p\rightarrow X$. This then corresponds to an element of $H^1(X;\Z/p)$ via its characteristic class. To make this association well-defined, we must quotient out by the automorphisms of $X$. 

With this proposition, our goal is now to understand the action of $\operatorname{Aut}(X)$ on $H_{\text{sing}}^1(X;\Z/p)$. This is given by a group homomorphism
\[\operatorname{Aut}(X)\rightarrow\operatorname{Aut}(H_{\text{sing}}^1(X;\Z/p)).\]
Recall that the \textbf{full mapping class group} $\mathcal{M}(X)$ of a space $X$ is defined to be $\mathcal{M}(X)=\operatorname{Aut}(X)/\mathcal{I}(X)$ where $\mathcal{I}(X)$ is the subgroup of automorphisms that are isotopic to the identity. Since $\mathcal{I}(X)$ acts trivially on $H_{\text{sing}}^1(X;\Z/p)$, our action $\operatorname{Aut}(X)\rightarrow\operatorname{Aut}(H_{\text{sing}}^1(X;\Z/p))$ descends to a map
\[\mathcal{M}(X)\rightarrow \operatorname{Aut}(H_{\text{sing}}^1(X;\Z/p)).\]

\subsection{Non-orientable Case}

Since homology and cohomology are dual in $\Z/p$ coefficients, it is sufficient to consider the action of $\mathcal{M}(X)$ on $H_1(X;\Z/p)$. The space $X$ can be represented as a sphere with $r$ crosscaps $\alpha_1,\dots ,\alpha_r$ as in Figure \ref{Nr}, which we can choose as generators for $H_1(X;\Z/p)$. We begin by discussing generators of $\mathcal{M}(X)$ and how they act on the $\alpha_i$.  

\begin{figure}
\begin{center}
\includegraphics[scale=.35]{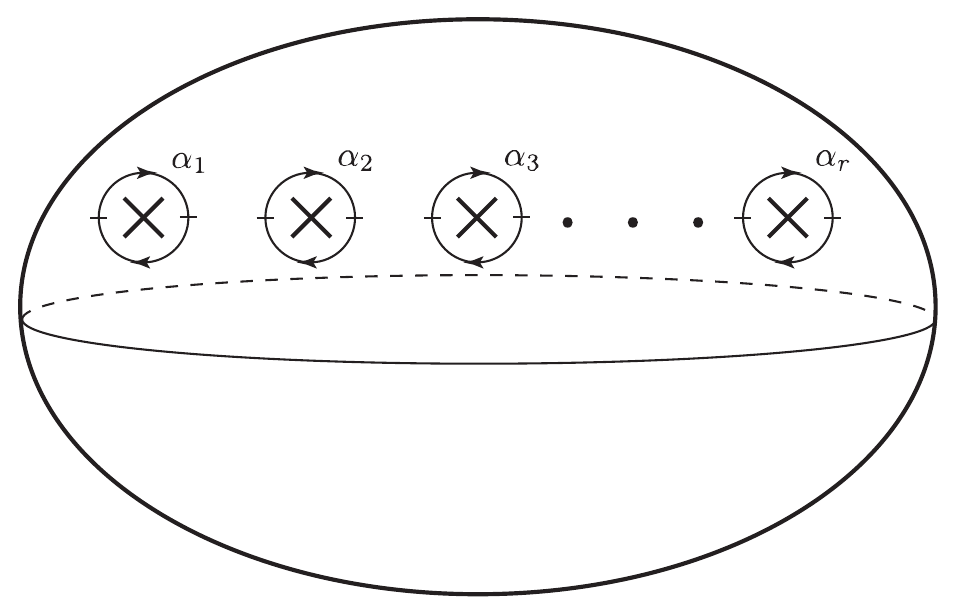}
\end{center}
\caption{\label{Nr} The space $N_r$ represented as a sphere with $r$ crosscaps.}
\end{figure}

Let $\mathcal{C}$ denote the curve shown in Figure \ref{dehntwistcurve}, which passes through the $i$th and $j$th crosscaps of $X$. Note that $\mathcal{C}$ is orientation preserving and thus has a neighborhood isomorphic to $S^1\times I$. Let $T_{i,j}$ denote the Dehn twist about $\mathcal{C}$ as defined in \cite{Lickorish}. The image of $\alpha_i$ under this map is $2\alpha_i+\alpha_j\in H_1(X;\Z/p)$, and the image of $\alpha_j$ is $-\alpha_i$. The Dehn twist $T_{i,j}$ fixes all other generators. See \cite{Pohl} for full details of this computation. 

\begin{figure}
    \centering
    \includegraphics[scale=.35]{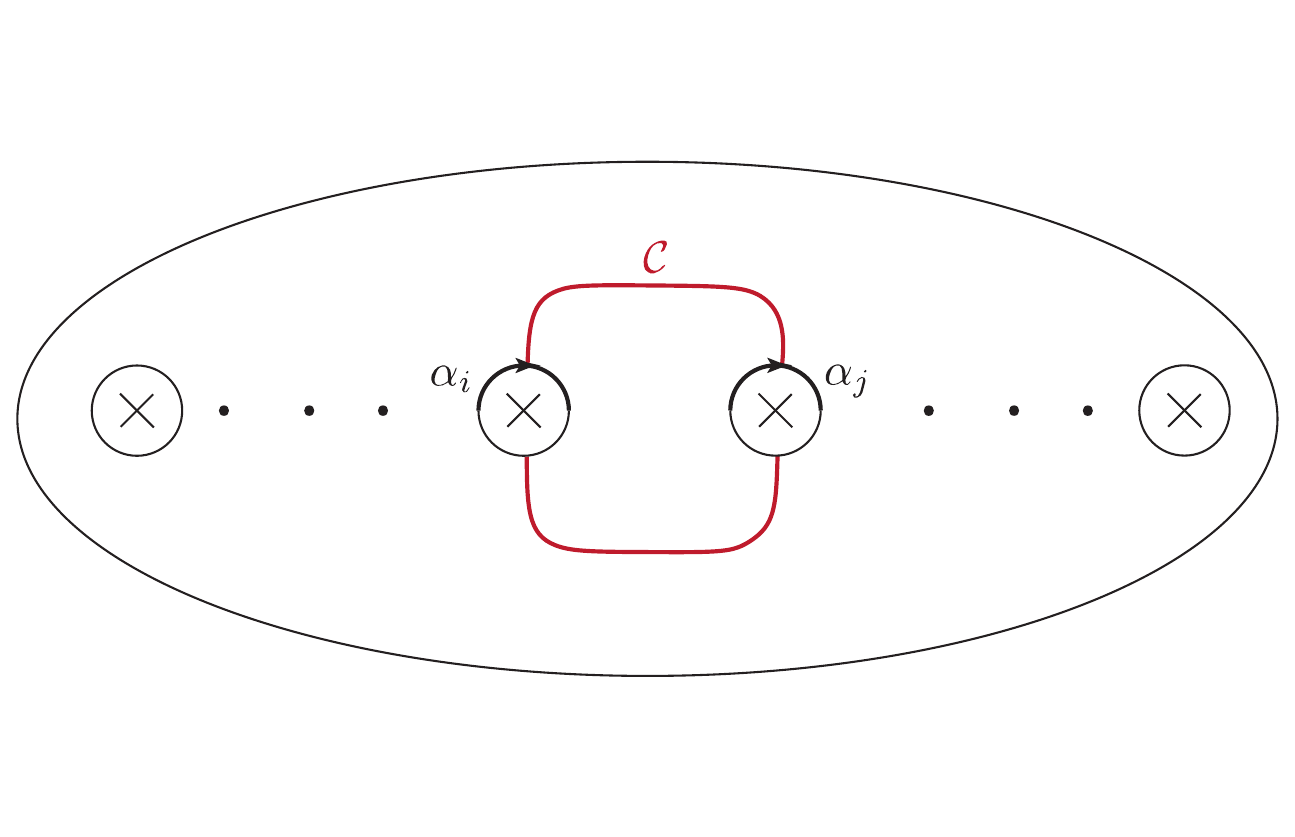}
    \caption{We perform the Dehn twist $T_{i,j}$ in the annular neighborhood of the curve $\mathcal{C}$ above.}
    \label{dehntwistcurve}
\end{figure}


For $r\geq 2$ we let $Y_{i,j}$ denote the ``crosscap slide'' map passing the $i$th crosscap through the $j$th. Note that this map is often referred to as a $Y$-homeomorphsim and is described in \cite{Lickorish} in greater detail. The image of $\alpha_i$ under this map is $-\alpha_i\in H_1(X;\Z/p)$, and the image of $\alpha_j$ is $2\alpha_i+\alpha_j$. As with the Dehn twist, all other homology generators are fixed by $Y_{i,j}$. This computation was carried out in \cite{MP85} and again in \cite{MP04} using more modern language. See also Appendix B of \cite{Pohl} for another treatment. 


The mapping class group of $\mathbb{R}P^2$ is trivial, but it turns out that for non-orientable surfaces of genus at least $2$, $\mathcal{M}(X)$ is generated by the $T_{i,j}$ and $Y_{i,j}$ for all $i\neq j$. This result is due to Chillingworth \cite{Chillingworth1969AFS}; see also \cite{SzepMCG} for a discussion using language similar to what we use here. 

\begin{proposition}\label{freenonor}
Let $X$ be a closed, connected, non-orientable surface of genus $r\geq 3$. There is only one nonzero orbit in $H^1(X;\Z/p)/\operatorname{Aut}(X)$.
\end{proposition}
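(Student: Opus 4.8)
The plan is to translate the statement, via Proposition \ref{freestuff} and the duality between $H^1(X;\Z/p)$ and $H_1(X;\Z/p)$ discussed above, into the purely algebraic assertion that the full mapping class group $\mathcal{M}(X)$ acts transitively on the nonzero elements of $H_1(X;\Z/p)$. Since $p$ is odd, the single relation $2(\alpha_1+\cdots+\alpha_r)=0$ among the crosscap classes becomes $\alpha_1+\cdots+\alpha_r=0$, so I would represent $H_1(X;\Z/p)$ as the tuples $(c_1,\dots,c_r)\in(\Z/p)^r$ modulo simultaneously adding a constant to every entry; such a class is nonzero exactly when the $c_i$ are not all equal. In this bookkeeping the generators act by explicit coordinate moves: $Y_{i,j}$ replaces $c_i$ by $2c_j-c_i$ and fixes every other coordinate, while $T_{i,j}$ sends $(c_i,c_j)$ to $(2c_i-c_j,\,c_i)$.

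Next I would extract two convenient moves from the generators of $\mathcal{M}(X)$ supplied by Chillingworth's theorem. First, a direct computation shows that $Y_{i,j}\circ T_{i,j}$ interchanges $\alpha_i$ and $\alpha_j$ and fixes the remaining classes, so $\mathcal{M}(X)$ surjects onto the symmetric group $S_r$ permuting the coordinates $c_i$. Second, composing $Y_{i,k}$ with $Y_{i,j}$ produces the shear $c_i\mapsto c_i+2(c_j-c_k)$, which fixes every other coordinate, in particular the two ``reference'' coordinates $c_j,c_k$. Because $2(c_j-c_k)$ is a unit of $\Z/p$ whenever $c_j\neq c_k$, iterating this shear lets me send any coordinate $c_i$ with $i\neq j,k$ to an arbitrary prescribed value while holding two distinct reference coordinates fixed; I will call this the \emph{set} move.

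With these moves in hand I would run the following reduction on a nonzero class $(c_1,\dots,c_r)$. Using $S_r$, arrange $c_1\neq c_2$; then, with $c_1,c_2$ as references, apply the set move to make $c_3=\cdots=c_r=c_2$, which in the quotient is the class $(c_1-c_2,0,\dots,0)=(d,0,\dots,0)$ with $d\neq 0$. Finally, using coordinate $3$ as scratch, I would rescale: set $c_3=1$ (references $c_1,c_2$), then set $c_1=1$ (references $c_2,c_3$), then restore $c_3=0$ (references $c_1,c_2$), arriving at $\bar{\alpha}_1$. Since every nonzero class reaches the single class $\bar{\alpha}_1$, there is exactly one nonzero orbit.

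I expect the main obstacle to be producing a genuine rescaling of a coordinate out of generators that each merely reflect or shear and that individually preserve every sum $\alpha_i+\alpha_j$; the two-reference set move is what overcomes this, and it is precisely here that the hypothesis $r\geq 3$ is essential, since the rescaling step requires a scratch coordinate disjoint from the two references. Alternatively, once the set and transposition moves are available one could note that the generated group contains enough transvections to realize $SL_{r-1}(\Z/p)$ and invoke transitivity of $SL_n(\Z/p)$ on nonzero vectors for $n\geq 2$, but the explicit reduction above seems cleaner and more self-contained.
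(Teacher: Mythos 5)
Your proposal is correct, and every step checks against the paper's conventions: your coordinate formulas for $T_{i,j}$ and $Y_{i,j}$ agree with the ones the paper states, $Y_{i,j}\circ T_{i,j}$ does swap the $i$th and $j$th coordinates, $Y_{i,k}\circ Y_{i,j}$ does add $\pm 2(c_j-c_k)$ to $c_i$ while fixing all other coordinates (and both moves fix the diagonal vector $(1,\dots,1)$, so they descend to your quotient presentation of $H_1(X;\Z/p)$), and the three-step normalization to $\bar{\alpha}_1$ goes through. Your combinatorial engine, however, is genuinely different from the paper's. The paper eliminates one generator and works with $(r-1)$-tuples; it first settles $r=3$ by explicit orbit bookkeeping — the set $S_k$ of pairs with $c_1-c_2=k$ is exhibited as a single $T_{1,2}$-orbit via the formula $T_{1,2}^{\ell}(k,0)=((\ell+1)k,\ell k)$, and the map $Y_{1,3}$, which negates the first coordinate, is then used to merge all the $S_k$ and the singletons $(k,k)$ into one orbit — and then handles $r>3$ by a descending induction on the position of the rightmost nonzero coordinate, repeatedly invoking the $r=3$ computation on pairs of coordinates. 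Your symmetric presentation ($r$-tuples modulo the diagonal) together with the transposition move and the two-reference shear (set) move gives a uniform, induction-free normalization valid for all $r\geq 3$ at once; it is shorter, and it isolates exactly where the hypothesis $r\geq 3$ enters (the need for a scratch coordinate disjoint from two distinct references), which is consistent with the failure of the statement at $r=2$ (Proposition \ref{kbottle}, where there are $(p-1)/2$ nonzero orbits). What the paper's more computational route buys is reusable output: the explicit formula for powers of the Dehn twist and the associated reduction scheme are cited again in the proof of Proposition \ref{free with boundary}, where finer control than bare transitivity is required.
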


 \begin{proof}

Let us start by considering the case when $r=3$. We first claim that $T_{1,2}^\ell (\alpha_1)=(\ell+1)\alpha_1 + \ell\alpha_2$, which can be quickly verified using induction. The $\ell=0$ case is immediate, and 
\begin{align*}
T_{1,2}^{\ell+1}((\ell+1)\alpha_1+\ell\alpha_2) &=(\ell+1)(2\alpha_1+\alpha_2)-\ell\alpha_1 \\
&=(2\ell +2)\alpha_1+(\ell+1)\alpha_2-\ell\alpha_1 \\
&=(\ell+2)\alpha_1+(\ell+1)\alpha_2.
\end{align*}
Let the tuple $(c_1,c_2)$ represent the element $c_1\alpha_1+c_2\alpha_2\in H_1(N_3;\Z/p)$. Now for each $1\leq k\leq p-1$, let $S_k$ be the set
\begin{align*}
S_k &=\left\{(k,0), (k\cdot 2,k), (k\cdot 3,k\cdot 2), \dots ,(k\cdot (p-1),k\cdot (p-2)),(0,k\cdot(p-1))\right\}\\
&=\left\{T_{1,2}^\ell(k,0)\mid \ell\geq 0\right\}
\end{align*}
and let $\tilde{S}_k$ be the singleton set containing $(k,k)$. Observe that $(c_1,c_2)\in S_k$ if and only if $c_1-c_2=k$. Thus every nonzero element of $H_1(X;\Z/p)$ is in at least one of the $S_k$ or $\tilde{S}_k$ for some $k$. One can also check that the map $T_{1,2}$ fixes all elements of the form $(k,k)$.  

Next we'll consider the action of $Y_{1,3}$ on the elements of $S_k$ and $\tilde{S}_k$. Since $Y_{1,3}(\alpha_1)=-\alpha_1=(p-1)\alpha_1$, the tuple $(1,0)$ maps to $(p-1,0)$. So these elements are in the same orbit, and it must be that $S_1\cup S_{p-1}$ is contained in a single orbit. 

Similarly, we have
\[Y_{1,3}(2,1)=(p-2,1)\in S_{p-3}.\] 
This implies the elements of $S_1$ and $S_{p-3}$ are in the same orbit. Therefore, $S_1\cup S_{p-1}\cup S_{p-3}$ is contained in a single orbit. Continuing in this way, we can see that in general 
\[Y_{1,3}(s,s-1)=(p-s,s-1)\in S_{p-(2s-1)}.\]
for all $1\leq s\leq p-1$. As $s$ ranges from $1$ to $p-1$, $S_{p-(2s-1)}$ ranges over all the $S_k$. This tells us that $\bigcup_{k=1}^{p-1} S_k$ is contained in a single orbit. 

Finally, we can check that $(k,k)$ must also be in this orbit for each $k$. We have
\[Y_{1,3}(k,k) = (p-k,k)\in S_{p-2k}.\]
So $\tilde{S}_k\cup S_{p-2k}$ is contained in the same orbit for each $k$. Since every nonzero element of $H_1(X;\Z/p)$ is in $S_k$ or $\tilde{S}_k$ for some $k$, there must be a single nonzero orbit in $H_1(X;\Z/p)/\text{Aut}(X)$. 

Let us now turn to the more general $r>3$ case. For ease of notation, we will denote elements of $H_1(X;\Z/p)$ by an $(r-1)$-tuple. We will show that every nonzero element is in the same orbit as $(1,0,\dots ,0)$ under the action of Dehn twists and crosscap slides. Let $(c_1,c_2,\dots ,c_{r-1})\in H_1(X;\Z/p)$ be nonzero, and let $c_i$ be the rightmost nonzero coordinate of the tuple. First suppose $i=1$. We know from the $r=3$ case that there exist compositions of $T_{1,2}$ and $Y_{1,3}$ which take $(c_1,0)$ to $(1,0)$. Since the maps $T_{j,k}$ and $Y_{j,k}$ fix all coordinates other than $j$ and $k$ of any given tuple, we can use $T_{1,2}$ and $Y_{1,r}$ to take $(c_1,0,\dots ,0)$ to $(1,0,\dots ,0)$ in the $r>3$ case. 

For $i>1$, our tuple is of the form $(c_1,c_2,\dots ,c_{i-1},c_i,0,\dots ,0)$. We again know from the $r=3$ case that there is a composition of $T_{1,2}$ and $Y_{1,3}$ which takes $(c_{i-1},c_i)$ to $(1,0)$. We can use the same compositions (replacing $Y_{1,3}$ with $Y_{1,r}$) in the $r>3$ case to take $(c_1,\dots, c_i,0,\dots ,0)$ to $(c_1,\dots, c_{i-2},1,0,\dots ,0)$. Now we have a new nonzero tuple in the same orbit as the original tuple whose rightmost nonzero coordinate is in the $(i-1)$st position. We can repeat the above process until we get that the tuple $(c_1,\dots, c_i,0,\dots,0)$ is in the same orbit as $(1,0,\dots,0)$. Since every nonzero element is in the same orbit as $(1,0,\dots,0)$, it must be that there is a single nonzero orbit in $H_1(X;\Z/p)/\text{Aut}(X)$. 
\end{proof}

Now let us go back and treat the case where $X$ is non-orientable of genus $2$. 

\begin{proposition}\label{kbottle}
There are $(p-1)/2$ nonzero orbits in $H^1(N_2;\Z/p)/\operatorname{Aut}(N_2)$.
\end{proposition}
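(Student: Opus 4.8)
The plan is to reuse the mapping-class-group generators from the proof of Proposition \ref{freenonor}, now specialized to the genus-$2$ surface, and to read off the homological action directly. First I would pin down $H_1(N_2;\Z/p)$ explicitly. Representing $N_2$ as a sphere with two crosscaps $\alpha_1,\alpha_2$, the defining relation becomes $2(\alpha_1+\alpha_2)=0$ in homology; since $p$ is odd, $2$ is invertible, so this forces $\alpha_2=-\alpha_1$. Hence $H_1(N_2;\Z/p)\cong\Z/p$ is cyclic on $\alpha_1$, with exactly $p-1$ nonzero elements. As in the $r\geq 3$ case, the action of $\operatorname{Aut}(N_2)$ on homology factors through $\mathcal{M}(N_2)$, which by the cited generation result (valid for non-orientable genus at least $2$) is generated by the Dehn twists $T_{i,j}$ and crosscap slides $Y_{i,j}$ with $i\neq j$ and $i,j\in\{1,2\}$.

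Next I would compute the induced action of each generator on the single generator $\alpha_1$, using the formulas recorded before Proposition \ref{freenonor} together with the relation $\alpha_2=-\alpha_1$. For the Dehn twist, $T_{1,2}(\alpha_1)=2\alpha_1+\alpha_2=2\alpha_1-\alpha_1=\alpha_1$, and symmetrically $T_{2,1}$ also acts trivially, so the Dehn twists induce the identity on $H_1(N_2;\Z/p)$. For the crosscap slide, $Y_{1,2}(\alpha_1)=-\alpha_1$, and the induced map is consistent on $\alpha_2$ since $Y_{1,2}(\alpha_2)=2\alpha_1+\alpha_2=\alpha_1=-\alpha_2$; thus $Y_{1,2}$ acts as multiplication by $-1$, as does $Y_{2,1}$.

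It then follows that the image of $\mathcal{M}(N_2)\to\operatorname{Aut}(H_1(N_2;\Z/p))=(\Z/p)^{\times}$ is exactly $\{\pm 1\}$. The orbits of $\{\pm 1\}$ on the $p-1$ nonzero elements of $\Z/p$ are the pairs $\{x,-x\}$, and since $p$ is odd we have $x\neq -x$ for every nonzero $x$, so each orbit has size $2$ and there are $(p-1)/2$ of them. Finally, because $H^1(N_2;\Z/p)$ and $H_1(N_2;\Z/p)$ are dual $\Z/p$-vector spaces with dual $\operatorname{Aut}(N_2)$-actions (so $\pm 1$ on one corresponds to $\pm 1$ on the other), the same count transfers to cohomology, yielding the claimed $(p-1)/2$ nonzero orbits. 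The only point requiring care is ensuring that no mapping classes beyond the $T_{i,j}$ and $Y_{i,j}$ contribute; this is precisely guaranteed by the generation theorem for genus-$2$ non-orientable surfaces, so the computation above is complete.
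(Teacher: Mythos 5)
Your proposal is correct and follows essentially the same route as the paper: represent $N_2$ as a sphere with two crosscaps so that $\alpha_2=-\alpha_1$ and $H_1(N_2;\Z/p)\cong\Z/p$, check that the Dehn twist acts trivially and the crosscap slides act by $-1$ using the generation of $\mathcal{M}(N_2)$ by the $T_{i,j}$ and $Y_{i,j}$, and conclude that the nonzero orbits are the pairs $\{k\alpha_1,-k\alpha_1\}$, giving $(p-1)/2$ of them. Your write-up is in fact slightly more careful than the paper's, making explicit the relation forcing $\alpha_2=-\alpha_1$, the consistency check on $Y_{1,2}$, and the duality transfer from $H_1$ to $H^1$.
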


\begin{proof}
As in the $r\geq 3$ case, we can choose to represent $N_2$ as a sphere with $2$ crosscaps $\alpha_1$ and $\alpha_2$. It is still sufficient in this case to check the action of the mapping class group on $H_1(N_2;\Z/p)\cong\langle\alpha_1\rangle\cong \Z/p$ using Dehn twists and crosscap slide maps. We again have $\alpha_1$ as a homology generator with $\alpha_2=-\alpha_1$. 


It can be easily verified that the Dehn twist about the curve passing through the two crosscaps acts trivially on $\alpha_1$ and $\alpha_2$ on homology.  



We also know that $Y_{1,2}(\alpha_1)=-\alpha_1$ and $Y_{2,1}(\alpha_1)=2\alpha_2+\alpha_1=-\alpha_1$. 

This gives us $(p-1)/2$ nonzero orbits, each containing $k\alpha_1$ and $-k\alpha_1$ for each $1\leq k\leq (p-1)/2$. 
\end{proof}


Recall that any closed, connected non-orientable surface $Y$ with free $C_p$-action must have genus $2+pr$ for some $r$. So $Y/C_p$ is a closed, connected non-orientable surface of genus $2+r$. Propositions \ref{freestuff} and \ref{freenonor} then guarantee that $Y$ must be isomorphic to $N_{2+pr}^{\text{free}}$ when $r\geq 1$ or one of the $(p-1)/2$ non-isomorphic Klein bottle actions.

\subsection{Orientable Case} 

When $X$ is an orientable surface, $\operatorname{Aut}(X)$ preserves the symplectic form given by the cup product. So the map $\operatorname{Aut}(X)\rightarrow \operatorname{Aut}(H^1(X))$ factors through the symplectic group $\operatorname{Sp}(2g,\Z/p)$. We again reference \cite{Dug19} for similar details in the $p=2$ case.

\begin{proposition}\label{forientable}
Let $X$ be a closed, connected, orientable surface of genus $g\geq 1$. There is only one nonzero orbit in $H^1(X;\Z/p)/\mathcal{M}(X)$.
\end{proposition}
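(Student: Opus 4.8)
The plan is to reduce the claim to the standard fact that the symplectic group acts transitively on nonzero vectors. By Poincaré duality we may work with $H_1(X;\Z/p)\cong(\Z/p)^{2g}$ in place of $H^1$, equipped with the nondegenerate alternating intersection form; as recalled above, the action of $\operatorname{Aut}(X)$ preserves this form and factors through a map $\mathcal{M}(X)\to\operatorname{Sp}(2g,\Z/p)$. The orbits of $\mathcal{M}(X)$ depend only on the image of this map, so it suffices to (i) identify the image with all of $\operatorname{Sp}(2g,\Z/p)$ and (ii) show that $\operatorname{Sp}(2g,\Z/p)$ has exactly two orbits on $(\Z/p)^{2g}$, namely $\{0\}$ and the set of nonzero vectors.

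For (i), I would use that the Dehn twist about a simple closed curve $c$ acts on homology by the symplectic transvection $x\mapsto x+\langle x,[c]\rangle[c]$ (the Picard–Lefschetz formula), exactly in the spirit of how the twists $T_{i,j}$ were used in the non-orientable case. Every nonzero class in $H_1(X;\Z/p)$ is the reduction of a primitive integral homology class and hence is represented by a simple closed curve, so every transvection $T_{[c]}$ lies in the image of $\mathcal{M}(X)$. Since symplectic transvections generate $\operatorname{Sp}(2g,\Z/p)$ over the field $\Z/p$, the image is all of $\operatorname{Sp}(2g,\Z/p)$. Alternatively, one could simply cite the classical surjectivity of $\mathcal{M}(X)\to\operatorname{Sp}(2g,\Z)$ together with the surjectivity of reduction modulo $p$.

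For (ii), let $v,w$ be nonzero vectors. Using nondegeneracy I would choose $u$ with $\langle v,u\rangle=1$, so that $v$ and $u$ span a symplectic (hyperbolic) plane, and then complete to a symplectic basis by induction on its orthogonal complement; doing the same for $w$ yields symplectic bases whose first vectors are $v$ and $w$, and the corresponding change of basis is an element of $\operatorname{Sp}(2g,\Z/p)$ carrying $v$ to $w$. This is precisely Witt's extension theorem applied to the isometry between the one-dimensional (automatically isotropic) subspaces spanned by $v$ and $w$. Transitivity on nonzero vectors follows, giving a single nonzero orbit.

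The main obstacle is step (ii), the transitivity of $\operatorname{Sp}(2g,\Z/p)$ on nonzero vectors; the point to verify is that the symplectic-basis completion genuinely goes through over $\Z/p$, which it does because $\Z/p$ is a field and the form is nondegenerate, so Witt's theorem applies without change. Step (i) is then routine once one has the transvection description of Dehn twists on homology, mirroring the explicit mapping-class-group computations already carried out for $N_r$.
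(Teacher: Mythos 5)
Your proposal is correct, but it takes a genuinely different route from the paper's. The paper proves transitivity of $\operatorname{Sp}(2g,\Z/p)$ on nonzero vectors by explicit matrix computation: for $g=1$ it exhibits two matrices $A,B$ and shows the sets $S_k$, $T_k$ they generate sweep out all nonzero vectors of $(\Z/p)^2$; for $g\geq 2$ it writes vectors in $g$ blocks, uses the product homomorphism $\operatorname{Sp}(2,\Z/p)\times\cdots\times\operatorname{Sp}(2,\Z/p)\to\operatorname{Sp}(2g,\Z/p)$ plus explicit $4\times 4$ block-swapping and block-merging matrices to reduce every nonzero vector to a single representative. Your appeal to Witt's extension theorem replaces all of that computation with one standard fact: over the field $\Z/p$ any nonzero (automatically isotropic) vector completes to a hyperbolic pair and then to a symplectic basis, so $\operatorname{Sp}(2g,\Z/p)$ acts transitively on nonzero vectors. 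Moreover, your step (i) makes explicit a point the paper leaves implicit: the proposition is about orbits of $\mathcal{M}(X)$, not of the full symplectic group, and orbits under a subgroup could a priori be strictly finer, so one needs the image of $\mathcal{M}(X)\to\operatorname{Sp}(2g,\Z/p)$ to be everything. The paper never verifies that its matrices $A$, $B$, $A^\prime$ are induced by actual homeomorphisms of $X$; your argument---Dehn twists act as transvections by Picard--Lefschetz, every nonzero mod-$p$ class lifts to a primitive integral class represented by a simple closed curve, and transvections generate the symplectic group over a field---supplies exactly this surjectivity (as does your alternative citation of $\mathcal{M}(X)\twoheadrightarrow\operatorname{Sp}(2g,\Z)$ composed with reduction mod $p$). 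What the paper's approach buys is self-containedness: nothing beyond matrix multiplication is needed. What yours buys is brevity, uniformity in $g$, and a complete logical chain from mapping classes to symplectic transitivity.
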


\begin{proof}
We first show there is one nonzero orbit in the case $g=1$. One can easily check that the matrices $A$ and $B$ given by 
\begin{align*}
A &=\begin{pmatrix}
1 & 0 \\
1 & 1
\end{pmatrix} &
B &=\begin{pmatrix}
1 & 1 \\
0 & 1
\end{pmatrix}
\end{align*}
are in $\text{Sp}(2,\Z/p)$. For each nonzero $k\in\Z/p$, the elements of the set 
\[S_k=\left\{\begin{pmatrix} k \\
0 \end{pmatrix}, \begin{pmatrix} k \\
k \end{pmatrix}, \begin{pmatrix} k \\
2k \end{pmatrix}, \dots , \begin{pmatrix} k \\
(p-1)k \end{pmatrix}\right\}\]
are in the same orbit since $A\begin{pmatrix} k \\
nk \end{pmatrix}=\begin{pmatrix} k \\
(n+1)k \end{pmatrix}$. Similarly, for each $k$ the elements of the set 
\[T_k=\left\{\begin{pmatrix} 0 \\
k \end{pmatrix}, \begin{pmatrix} k \\
k \end{pmatrix}, \begin{pmatrix} 2k \\
k \end{pmatrix}, \dots , \begin{pmatrix} (p-1)k \\
k \end{pmatrix}\right\}\]
are in the same orbit since $B\begin{pmatrix} nk \\
k\end{pmatrix}=\begin{pmatrix} (n+1)k \\
k\end{pmatrix}$. Thus we can see that the orbit containing $\begin{pmatrix} k\\
k\end{pmatrix}$ must also contain all elements of $S_k$ and $T_k$. In particular, $S_k\cup T_k$ is contained in a single orbit for each $k$. 

For each nonzero $k\in\Z/p$, we can find its multiplicative inverse $k^{-1}$. Then $\begin{pmatrix} k \\
k^{-1} k \end{pmatrix}=\begin{pmatrix} 1\cdot k \\
1 \end{pmatrix}$ is in both $S_k$ and $T_1$. So for each $k$, the elements of $S_k$ (and thus $T_k$) are in the same orbit as $T_1$. Finally, observe that every nonzero element of $\left(\Z/p\right)^2$ is in $S_k$ or $T_k$ for some $k$. Thus, all nonzero elements are in the same orbit under the action of $\text{Sp}(2,\Z/p)$. 

Now suppose $g\geq 2$.
Choose a symplectic basis $\{e_1,f_1,\dots ,e_g,f_g\}$ so that $\langle e_i,f_i\rangle =1$, $\langle f_i,e_i\rangle =-1$, and all other pairings are 0. Denote $v\in \left(\Z/p\right)^{2g}$ by $v=[B_1,\dots ,B_g]$ where each $B_i\in \left(\Z/p\right)^2$ and 
\[v=(B_1)_1 e_1+(B_1)_2f_1 + \cdots + (B_g)_1 e_g+(B_g)_2f_g.\]
Consider the evident homomorphism
\[\text{Sp}(2,\Z/p)\times \cdots \times \text{Sp}(2,\Z/p) \rightarrow \text{Sp}(2g,\Z/p).\]
This allows us to represent orbits by vectors $[B_1,\dots ,B_g]$ with $B_i\in \{[0,0],[1,0]\}$ by the $g=1$ case. Now consider the $4\times 4$ symplectic matrix
\[A=\begin{pmatrix}
0 & I_2 \\
-I_2 & 0 
\end{pmatrix}\]
where $I_2$ is the identity matrix. Since $A$ is symplectic, so is $A^\prime =I_{2k}\oplus A \oplus I_{2g-2k-4}$ for any $0\leq k \leq g-2$. Multiplying a vector $v=[B_1,\dots ,B_g]$ by $A^\prime$ allows us to permute its $(k+1)$st and $(k+2)$nd blocks with the price of a sign. We can then multiply by the appropriate element of $\text{Sp}(2,\Z/p)\times \cdots \times \text{Sp}(2,\Z/p)$ to reduce all coefficients to $1$ or $0$. 

Thus, there are at most $g+1$ orbits of the action of $\text{Sp}(2,\Z/p)$ on $\left(\Z/p\right)^{2g}$. These orbits can be represented by the vectors
\begin{align*} 
[O,O,\dots , O] & & [T,O,\dots ,O] & & [T,T,O,\dots ,O] & & \cdots & & [T,T,\dots ,T]
\end{align*}
where $O=[0,0]$ and $T=[1,0]$. 

Let $B$ be the symplectic matrix
\[B=\begin{pmatrix}
1 & 0 & 0 & 0 \\
0 & 1 & 0 & -1 \\
1 & 0 & 1 & 0 \\
0 & 0 & 0 & 1
\end{pmatrix}\]
and observe that when $g\geq 2$, $B\oplus I_{2g-4}$ sends $[T,O,\dots ,O]$ to $[T,T,O,\dots , O]$. In particular, these two representatives are actually in the same orbit. Moreover, for $0\leq k \leq g-2$, $I_{2k}\oplus B\oplus I_{2g-2k-4}$ takes $[T,T,\dots ,T,O,\dots ,O]$ (with $T$ in the first $k+1$ entries) to the vector with $T$ in the first $k+2$ entries. Thus, all nonzero vectors in $\left(\Z/p\right)^{2g}$ are in the same orbit under the action of the symplectic group.
\end{proof}

Now let $Y$ be an orientable surface with a free $C_p$-action. We can see from Lemma \ref{euler characteristic} that the genus of $Y$ must be $1+pg$ for some $g$. This implies $Y/C_p$ is a closed, connected orientable surface of genus $1+g$. Propositions \ref{freestuff} and \ref{forientable} then imply that there is only one isomorphism class of $C_p$-spaces whose quotient by $C_p$ is $M_{1+g}$. So $Y$ must be isomorphic to $M_{1+pg}^{\text{free}}$.





\section{Non-free Classification Proof}\label{nonfreeproof}

This section contains proofs for Theorems \ref{orientableclassification} and \ref{nonorientableclassification}. In each case, we begin by establishing several lemmas describing relationships between surfaces constructed using differing equivariant surgery methods. The classification theorems are then proven using induction on the number of fixed points. 

\subsection{Proof of Classification for Orientable Surfaces}

Let us start with the orientable case.

\begin{lemma}\label{tube}
Let $X$ be a closed, connected $C_p$-surface with distinct fixed points $x$ and $y$. Then for some $i$ there exists $EB_{p,(i)}\subset X$ with $x,y\in EB_{p,(i)}$. Moreover, $\textup{nbd}(EB_{p,(i)})\subset X$ must be isomorphic to $R_{p,(i)}$ or $TR_{p,(i)}$.
\end{lemma}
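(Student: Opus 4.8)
The plan is to build the copy of $EB_{p,(i)}$ joining $x$ and $y$ first, and then to recognize a regular neighborhood of it by realizing that neighborhood as a cyclic branched cover of a disk. For the existence of $EB_{p,(i)}$, I would reduce to the non-fixed basepoints of Proposition \ref{EB in X}. Choose small invariant disk neighborhoods $D_x$ and $D_y$ on which $\sigma$ acts as a rotation, pick non-fixed points $a\in\partial D_x$ and $b\in\partial D_y$, and apply Proposition \ref{EB in X} to get a simple arc from $a$ to $\sigma^k b$ that misses all of its conjugates. Splicing on a radial segment of $D_x$ from $x$ to $a$ and a radial segment of $\sigma^k D_y=D_y$ from $\sigma^k b$ to $y$ produces a simple arc $\bar\alpha$ from $x$ to $y$; since the radial segments are permuted freely away from the centers, the conjugates $\sigma^j\bar\alpha$ meet only in the shared endpoints $x$ and $y$. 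Their union is $p$ arcs joining the two fixed points, i.e.\ a copy of the unreduced suspension $EB_{p,(i)}$ with $x,y$ as cone points.

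Next I would set $N:=\mathrm{nbd}(EB_{p,(i)})$, a compact $C_p$-invariant surface deformation retracting onto $EB_{p,(i)}$, so $\chi(N)=\chi(EB_{p,(i)})=2-p$. The action is free on $N\setminus\{x,y\}$, so $N\to N/C_p$ is a $p$-fold cyclic cover branched totally over the two images of $x$ and $y$. A Riemann--Hurwitz count, $2-p=p\,\chi(N/C_p)-2(p-1)$, gives $\chi(N/C_p)=1$, and since $N/C_p$ is a compact connected surface with nonempty boundary this forces $N/C_p\cong D^2$ with the two branch points in its interior. In particular $N\setminus\{x,y\}$ is an honest cover of the orientable surface $D^2$ minus two points, hence orientable, and filling in the two branch disks shows $N$ itself is orientable; this is exactly what lets the conclusion hold even when the ambient $X$ is non-orientable.

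It then remains to read off $N$ from its monodromy. The cover is classified by the images $a,c\in(\Z/p)^\times$ of the two loops around the branch points, which are precisely the local rotation numbers of $\sigma$ at $x$ and $y$. The boundary loop is the product of these two loops and so has monodromy $a+c$, whence the number of boundary circles of $N$ is the number of orbits of ``add $a+c$'' on the $p$ sheets: this is $p$ when $a+c\equiv 0\pmod p$ and $1$ otherwise. Combined with $\chi(N)=2-p$ and orientability, the two cases give $(\mathrm{genus},\#\partial)=(0,p)$ and $((p-1)/2,1)$, which are exactly the underlying surfaces of $R_{p,(i)}$ and $TR_{p,(i)}$. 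I would finish by upgrading this to an equivariant isomorphism: $R_{p,(i)}$ has pole rotation numbers $\{i,-i\}$ (so $a+c\equiv 0$), while $TR_{p,(i)}$ realizes the $a+c\not\equiv 0$ case, and a $C_p$-branched cover of $D^2$ over two interior points is determined up to equivariant homeomorphism by its monodromy, giving $N\cong R_{p,(i)}$ or $TR_{p,(i)}$ for a suitable $i$.

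The step I expect to be the main obstacle is this last one: promoting the match of homeomorphism type to a genuine \emph{equivariant} isomorphism with the standard models, rather than merely matching Euler characteristic, orientability, and boundary count. Carrying this out cleanly needs the classification of cyclic branched covers of the disk over two points (or, equivalently, an explicit equivariant gluing that matches the rotation data at $x$ and $y$ together with the framings of the $p$ bands). A convenient hands-on alternative is to compute the boundary permutation of the ribbon graph $EB_{p,(i)}$ directly from the cyclic orders of the $p$ edge-ends at $x$ and $y$ dictated by the rotation numbers, which reproduces the same dichotomy $a+c\equiv 0$ versus $a+c\not\equiv 0$ and simultaneously pins down the framings.
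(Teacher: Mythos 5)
Your route is genuinely different from the paper's. The paper proves the lemma by cutting $\mathrm{nbd}(EB_{p,(i)})$ into two cone pieces and asserting that an equivariant regluing is determined by the attachment of a single edge, with ``only two choices up to isomorphism'' (Figure \ref{RpORTRp}); you instead classify the neighborhood $N$ as a cyclic cover of $N/C_p\cong D^2$ branched at two points, via Riemann--Hurwitz and monodromy. Your first three steps are sound, and in one respect more careful than the paper: Proposition \ref{EB in X} is stated only for \emph{non-fixed} points, so your splicing of radial arcs is a needed repair (one should also arrange the arc from Proposition \ref{EB in X} to meet the invariant disks only in its end segments, a routine general-position point). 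The computations $\chi(N)=2-p$, $N/C_p\cong D^2$, orientability of $N$, and the boundary dichotomy ``$p$ circles if $a+c\equiv 0\pmod p$, one circle otherwise'' are all correct.

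The gap is exactly the step you flagged, and it is not a technicality: the claim that ``$TR_{p,(i)}$ realizes the $a+c\not\equiv 0$ case'' is false once $p\geq 5$. The two fixed points of $TR_{p,(i)}$ have \emph{equal} rotation numbers: the one-click rotation of the $2p$-gon preserves the edge identifications, commutes with the $C_p$-action, preserves orientation, and swaps the two vertex classes. Since an equivariant homeomorphism of connected orientable $C_p$-surfaces preserves the pair of local rotation numbers up to simultaneous negation, the models $R_{p,(i)}$ and $TR_{p,(i)}$ realize only the rotation data $\{i,-i\}$ and $\{i,i\}$, i.e.\ only monodromy pairs with $c=\pm a$. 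For $p\geq 5$ pairs with $c\neq\pm a$ exist and occur in closed $C_p$-surfaces: in the $10$-gon surface $\Hex_1^5$ the center has rotation number $1$ while each vertex class has rotation number $3$ (forced by the requirement that the monodromies $(1,2,2)$ sum to $0$ bmod $5$), so taking $x$ the center and $y$ a vertex, \emph{every} $EB_5$ through $x$ and $y$ has neighborhood with rotation data $\{1,3\}$ --- one boundary circle, genus $2$, but not equivariantly isomorphic to any $R_{5,(i)}$ or $TR_{5,(i)}$, even after twisting by $\operatorname{Aut}(C_5)$. So your final step cannot be closed as stated: your own monodromy framework, carried to the end, yields a trichotomy (antidiagonal data $\{a,-a\}$ giving $R$, diagonal data $\{a,a\}$ giving $TR$, and a third family with $c\neq\pm a$), not the dichotomy of the lemma. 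For $p=3$ your argument does complete, since $a,c\neq 0$ and $a+c\not\equiv 0\pmod 3$ forces $a=c$.

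To be fair to you, the paper's own proof makes the same leap: ``up to isomorphism there are only two choices'' is justified by a figure drawn for $p=3$, where it is true; for $p\geq 5$ the equivariant gluing is determined not just by one edge attachment but also by the local rotation numbers at $x$ and $y$, which is precisely what your pair $(a,c)$ records. So your approach is not wrong in method --- it is the right tool, and it exposes the fact that the statement itself (and the proof in the paper) needs to be corrected or restricted for $p\geq 5$.
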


\begin{proof}
This reduces to a question of how we can glue together the surfaces in Figure \ref{RpORTRp} (showing the $p=3$ case) along the red lines using equivariant maps. Any such map is completely determined by how we attach a single edge, and up to isomorphism there are only two choices. One of these produces $R_{p,(i)}$ and the other $TR_{p,(i)}$.
\end{proof}

\begin{figure}
\begin{center}
\includegraphics[scale=.6]{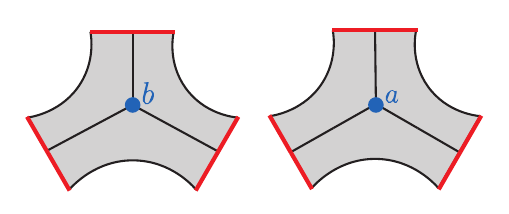}
\end{center}
\caption{\label{RpORTRp} Gluing the red edges using an equivariant map results in $R_3$ or $TR_3$.}
\end{figure}

\begin{lemma}\label{hexmap}
There is an equivariant automorphism $f$ on $TR_{p,(i)}$ with distinct fixed points $x$ and $y$ so that $f(x)=y$ and $f(y)=x$ and $f|_{\partial TR_{p,(i)}}=\textup{id}$.
\end{lemma}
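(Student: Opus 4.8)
The plan is to realize $f$ as a single one-step rotation of the defining $2p$-gon, corrected near the boundary by an equivariant collar twist.

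First I would fix the combinatorial model. Represent $TR_{p,(i)}$ as the regular $2p$-gon with a central disk removed and opposite edges identified, with vertices labeled $0,1,\dots,2p-1$ and $\sigma$ acting as the rotation by $2\pi/p$, i.e. the shift $k\mapsto k+2$. As recorded in the proof of the two-fixed-point lemma, the vertices collapse to exactly two classes, the even vertices and the odd vertices, and these are the two fixed points $x$ and $y$; note that $\sigma$ preserves parity and hence fixes each class.

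Next I would introduce the candidate map before correcting it. Let $r$ be the rotation of the $2p$-gon by a single step, i.e. by the angle $\pi/p$, so that $r(k)=k+1$. The edge-identification pattern defining $TR_{p,(i)}$ is invariant under the full rotational symmetry of the $2p$-gon, so $r$ descends to a self-homeomorphism of $TR_{p,(i)}$. Since $r$ and $\sigma$ are both rotations about the common center they commute, so $r$ is equivariant. Because $r$ reverses the parity of each vertex, it interchanges the two vertex classes, giving $r(x)=y$ and $r(y)=x$. The one defect of $r$ is that it is not the identity on $\partial TR_{p,(i)}$: it rotates the boundary circle by $\pi/p$.

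This defect is the crux, and I expect it to be the main obstacle: any rigid equivariant symmetry swapping the two fixed points necessarily moves the boundary, so the real work is to undo the boundary rotation without destroying equivariance or the swap. I would resolve it with a collar untwist. Choose a collar neighborhood of $\partial TR_{p,(i)}$ that is disjoint from $x$ and $y$ (these sit at the outer vertices, far from the central hole), on which the $C_p$-action is rotation on each concentric circle. Define an equivariant twist $\tau$ supported in this collar that rotates the boundary circle by $-\pi/p$ and tapers to the identity at the inner edge: in coordinates $S^1\times[0,1]$ with the boundary at $t=0$, set $\tau(e^{i\phi},t)=(e^{i(\phi-(\pi/p)g(t))},t)$ for a continuous $g$ with $g(0)=1$ and $g(1)=0$. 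Since $\tau$ rotates each circle by an amount depending only on the radial coordinate, it commutes with the rotational $C_p$-action and is therefore equivariant, and it fixes $x$ and $y$ because they lie outside the collar.

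Finally, set $f=\tau\circ r$. It is an equivariant self-homeomorphism of $TR_{p,(i)}$; on the boundary the $+\pi/p$ rotation from $r$ is cancelled by the $-\pi/p$ rotation from $\tau$, so $f|_{\partial TR_{p,(i)}}=\textup{id}$; and since $\tau$ fixes the fixed points we retain $f(x)=y$ and $f(y)=x$. The whole argument hinges on the collar untwist being legitimate, which works precisely because the boundary action is a free rotation that commutes with fixed-angle circle twists.
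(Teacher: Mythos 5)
Your proposal is correct and follows essentially the same strategy as the paper: a rigid rotation of the defining $2p$-gon that interchanges the two vertex classes (hence the fixed points) and commutes with the $C_p$-action, corrected by an equivariant partial Dehn twist supported in a collar of $\partial TR_{p,(i)}$ so the boundary is fixed pointwise. The only cosmetic difference is that you rotate by $\pi/p$ where the paper rotates by $180^\circ$; since $p$ is odd both rotations swap the even and odd vertex classes, so the two arguments coincide in substance.
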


\begin{proof}
Recall the polygon representation of $TR_{p,(i)}$ as shown in Figure \ref{TR3}. The action of $C_p$ on $TR_{p,(i)}$ corresponds to a rotation action by $e^{2\pi i/p}$ on the polygon. 

Let $A$ represent the annulus of width $\epsilon>0$ inside $TR_{p,(i)}$ so that $\partial TR_{p,(i)}$ is a boundary component of $A$. Define $f$ so that $f|_A$ is the Dehn twist with $f|_{\partial TR_{p,(i)}}=\textup{id}$ and $f$ restricted to the other boundary component of $A$ is given by $180^\circ$ rotation. Then let $f|_{TR_{p,(i)}\setminus A}$ act as rotation by $180^\circ$. Notice that $f$ respects the $C_p$-action of $TR_{p,(i)}$ and swaps $x$ and $y$ as desired.
\end{proof}

\begin{lemma}\label{M1[3]+[TAT]}
If $x,y\in \Hex_1$ are distinct fixed points, then $\Hex_1+_x[TR_p]\cong \Hex_1+_y[TR_p]$. Moreover,
\[\Hex_1+_x[TR_p]\cong \Sph_{p-1}[4].\]
\end{lemma}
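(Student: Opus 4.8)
The plan is to prove the stronger statement that $\Hex_1+_x[TR_p]\cong\Sph_{p-1}[4]=S^{2,1}+[R_p]$ for \emph{every} fixed point $x$ of $\Hex_1$; this yields both the first assertion and the ``moreover'' clause at once, since all three surgeries then share a common target. Writing $\Hex_1=S^{2,1}+_S[TR_p]$, its three fixed points are the surviving north pole $N$ together with the two fixed points $a,b$ of the glued twisted ribbon $T\cong TR_{p,(i)}$. I would first dispose of the pair $a,b$: the automorphism of Lemma \ref{hexmap} is the identity on $\partial T$, so it extends by the identity over $\Hex_1\setminus T$ to an equivariant automorphism of $\Hex_1$ interchanging $a$ and $b$. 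Hence $\Hex_1+_a[TR_p]\cong\Hex_1+_b[TR_p]$, and it remains to treat the representative cases $x=N$ and $x=a$.

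Each remaining case I would handle through Proposition \ref{-[Rp] invariance}. Since $+[R_p]$ is independent of the chosen disks by Corollary \ref{independence of disk choice}, undoing the added ribbon gives $(S^{2,1}+[R_p])-[R_p]\cong S^{2,1}$, so it suffices to exhibit inside $Z:=\Hex_1+_x[TR_p]$ a $C_p$-ribbon for which $Z-[R_p]\cong S^{2,1}$. The numerical bookkeeping is automatic: $Z$ is closed, connected, and orientable with $F(Z)=4$ and $\beta(Z)=2(p-1)$, so any $-[R_p]$ surgery produces an orientable space with $F=2$ and $\beta=0$, and this space is connected because $Z$ has at least three fixed points (as noted just after the definition of $-[R_p]$). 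A connected, closed, orientable $C_p$-surface with $F=2$ and $\beta=0$ is forced to be $S^{2,1}$, so the entire argument reduces to producing a single genuine (untwisted) ribbon.

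To produce that ribbon I would pass to the explicit models. For $x=N$, twisted ribbon surgeries at the two distinct fixed points $N$ and $S$ of $S^{2,1}$ are supported on disjoint neighborhoods and hence may be performed in either order, so $\Hex_1+_N[TR_p]=S^{2,1}+_S[TR_p]+_N[TR_p]$ is the representation sphere carrying a twisted ribbon at each pole; for $x=a$ it is the sphere with a twisted ribbon at the south pole and a second twisted ribbon grafted onto one of that ribbon's fixed points. In each model I would choose two fixed points lying in \emph{different} twisted-ribbon pieces (for instance $a$ and a fixed point of the north ribbon in the first model, or $b$ and a fixed point of the grafted ribbon in the second), take the copy of $EB_{p,(i)}$ through them supplied by Lemma \ref{tube}, and verify that its regular neighborhood is of ribbon type rather than twisted-ribbon type. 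Removing it and capping the resulting $p$ boundary circles with $D^2\times C_p$ then undoes all handles and returns the ambient sphere $S^{2,1}$, completing both cases.

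The main obstacle is precisely this last geometric identification. Lemma \ref{tube} only guarantees that the neighborhood of the chosen $EB_{p,(i)}$ is \emph{one} of $R_{p,(i)}$ or $TR_{p,(i)}$, so deciding which occurs — equivalently, whether the neighborhood has $p$ boundary circles or a single one — and confirming that the capped complement is genuinely $S^{2,1}$ rather than a positive-genus surface requires a careful choice of the connecting path among its conjugates together with a boundary-component count in the gluing diagram. I expect this to be carried out pictorially, tracking how the two half-twists combine through the identifications exactly as in the proof of Lemma \ref{tube}, with the boundary-fixing Dehn-twist-and-flip automorphism of Lemma \ref{hexmap} available to normalize the interface where the two ribbon pieces are attached.
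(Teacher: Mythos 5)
Your route is workable in outline but genuinely different from the paper's, and two of its steps are asserted rather than proved. For comparison: the paper's proof of the first claim does not factor through the identification with $\Sph_{p-1}[4]$ at all. It asserts that \emph{any} two distinct fixed points of $\Hex_1$ --- including a mixed pair such as the north pole $N$ and a ribbon fixed point $a$ --- lie in a common copy of $TR_p$, and then applies the boundary-fixing swap of Lemma \ref{hexmap} exactly as you do for the pair $a,b$, extending by the identity. For the ``moreover'' clause the paper argues directly: $\Hex_1+_x[TR_p]$ is visibly two copies of $TR_p$ glued along their boundary circles, and Figure \ref{M1lemma} exhibits the same decomposition of $\Sph_{p-1}[4]$ (one $TR_p$ is a neighborhood of one $EB_p$, its complement is another copy of $TR_p$), so the two spaces are identified without any $-[R_p]$ surgery, without Proposition \ref{-[Rp] invariance}, and without any appeal to a classification of low-genus actions. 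Your scheme --- undo an untwisted ribbon, force $Z-[R_p]\cong S^{2,1}$ by $F$ and $\beta$ bookkeeping together with Lemma \ref{connectedness}, and conclude via Proposition \ref{-[Rp] invariance} --- is structurally sound, and your arithmetic is right: $\beta(Z)=2(p-1)$, consistent with $\beta(\Hex_1)=p-1$ (note $+[TR_p]$ adds $p-1$ to $\beta$, as the $-[TR_p]$ statement and the stated genus of $\Hex_1$ confirm; the paper's line saying it adds $2(p-1)$ is a typo).

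The two soft spots. First, ``a connected, closed, orientable $C_p$-surface with $F=2$ and $\beta=0$ is forced to be $S^{2,1}$'' is not automatic: it is the classification of $C_p$-actions on the sphere. It can be obtained non-circularly from the $F=2$ step of the paper's main induction (that step uses only Lemma \ref{tube}, the free classification, and the observation of Figure \ref{-Rpchoices}, never the present lemma; there $X\cong S^{2,1}\#_pM_{g'}$ and $\beta=0$ forces $g'=0$), or from the classical Brouwer--Ker\'{e}kj\'{a}rt\'{o} rigidity of finite-order homeomorphisms of $S^2$ --- but you must say which, since as written it is a bare assertion. Second, and more substantively, your entire reduction hinges on actually producing an \emph{untwisted} $R_p$ through your chosen pair of fixed points in each of the two models, and this is exactly the step you defer. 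It is the genuine content of the lemma: Lemma \ref{tube} only decides the type after a boundary-component count, and the type depends on the chosen path, not merely its endpoints (in $\Sph_{p-1}[4]$ the same pair of fixed points can in general be joined by $EB_p$'s of both ribbon and twisted type, which is why, e.g., $-[TR_p]$ at a pair joined elsewhere by an $R_p$ is not ruled out by the endpoints alone). Until the figure with the count of $p$ boundary circles is drawn for both cases $x=N$ and $x=a$, the load-bearing geometric verification remains unproven --- it is precisely the content the paper carries in Figure \ref{M1lemma}, where the check is the complementary one that the complement of one $TR_p$ has a \emph{single} boundary circle.
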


\begin{proof}
Given any two distinct fixed points $x,y\in\Hex_1$, there is a copy of $TR_p$ containing them. By Lemma \ref{hexmap}, there is an automorphism $\tilde{\varphi}$ of $TR_p\subset \Hex_1$ swapping $x$ and $y$. This can be extended to an automorphism $\varphi$ of $\Hex_1$ by defining $\varphi$ to be $\tilde{\varphi}$ on $TR_p$ and the identity everwhere else. Thus we can define an isomorphism $\Hex_1+_x[TR_p]\rightarrow\Hex_1+_y[TR_p]$ given by $\varphi$ everywhere outside of the added copy of $TR_p$. 

Observe that $\Hex_1+_x[TR_p]$ can be obtained by taking two copies of $TR_p$ and identifying their boundaries. Figure \ref{M1lemma} shows how this gives us $\Sph_{p-1}[4]$ in the $p=3$ case. Choose one copy of $TR_p$ to be a neighborhood of the red $EB_p$. It's complement in $\Sph_{p-1}[4]$ is another copy of $TR_p$ containing the purple $EB_p$.
\end{proof}

\begin{lemma}\label{Bn+[TR3]}
If $x,y\in \Hex_n$ ($n\geq 2$) are distinct fixed points, then 
\[\Hex_n+_x[TR_p]\cong \Hex_n+_y[TR_p].\] 
In other words, twisted ribbon surgery on $\Hex_n$ is independent of the fixed point chosen.
\end{lemma}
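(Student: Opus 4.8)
The plan is to deduce the lemma from a transitivity statement about symmetries of $\Hex_n$. If $\varphi\colon\Hex_n\to\Hex_n$ is an equivariant self-homeomorphism (allowed to be twisted by an element of $\operatorname{Aut}(C_p)$, in line with our standing conventions) with $\varphi(x)=y$, then $\varphi$ carries a fixed disk around $x$ to one around $y$ and respects the local rotation data there, so extending it by the identity across the glued $TR_{p,(i)}$ yields an isomorphism $\Hex_n+_x[TR_p]\cong\Hex_n+_y[TR_p]$. (Here we use that a fixed neighborhood of an isolated fixed point is unique up to equivariant isotopy, so twisted ribbon surgery depends only on the chosen fixed point.) It therefore suffices to show that the group of such automorphisms acts transitively on the $3n$ fixed points of $\Hex_n$.

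I would assemble this symmetry group from two kinds of moves. The first is supplied by Lemma \ref{hexmap}: inside any embedded $TR_{p,(i)}\subset\Hex_n$ the two fixed points are interchanged by an equivariant automorphism that is the identity on $\partial TR_{p,(i)}$, hence extends by the identity to an automorphism of $\Hex_n$ swapping exactly those two points. Recording these as edges of a graph $\Gamma$ on the fixed point set — joining two fixed points whenever they form the fixed pair of some embedded $TR_{p,(i)}$ — reduces transitivity via these moves to connectivity of $\Gamma$. By Lemma \ref{tube} every pair of fixed points has an $EB_p$-neighborhood isomorphic to $R_{p,(i)}$ or $TR_{p,(i)}$, and only the $TR_{p,(i)}$ case produces an edge. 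The second kind of move is a global symmetry reversing the tower, which I would use to link any fixed points left unconnected by $\Gamma$ and, being $\operatorname{Aut}(C_p)$-twisted, to reconcile the two possible rotation numbers.

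The bulk of the work is then the combinatorial verification, which I would organize by induction on $n$ using the presentation $\Hex_n=(\Hex_{n-1}+[R_p])+_b[TR_p]$ and the ``tower of $n$ polygons'' picture. Concretely, one identifies the copies of $TR_{p,(i)}$ coming from each floor of the tower together with those straddling consecutive floors, and checks that the resulting $TR$-adjacencies, augmented by the flip, place all $3n$ fixed points in a single orbit.

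The hardest part is exactly this connectivity check. A pair of fixed points admitting only an $R_{p,(i)}$-neighborhood — such as the two poles of $S^{2,1}$, whose opposite rotation numbers forbid any boundary-fixing equivariant swap — contributes no edge to $\Gamma$, so one must show that for $n\geq 2$ the $TR$-type adjacencies (with the single global flip) nevertheless suffice. Pinning down where the embedded $TR_{p,(i)}$'s actually sit in the tower, verifying this uniformly in $n$, and contrasting it with the failure for $\Hex_{n-1}+[R_p]$ — where, as in Example \ref{Bn}, the extra ribbon breaks precisely the symmetry that would make the distinguished point $b$ interchangeable with the rest — is the core of the argument.
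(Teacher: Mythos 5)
Your mechanism is the paper's: exhibit an embedded $TR_{p,(i)}$ containing the two fixed points, apply Lemma \ref{hexmap} to swap them by an equivariant automorphism that is the identity on the boundary, extend by the identity to all of $\Hex_n$, and transport the surgery. But as written the proposal has a genuine gap: the connectivity of your graph $\Gamma$, which you yourself defer as ``the bulk of the work'' and ``the hardest part,'' is never established, and that connectivity \emph{is} the content of the lemma. The paper's proof (a reprise of Lemma \ref{M1[3]+[TAT]}) rests on the sharper claim that $\Gamma$ is in fact complete: \emph{any} two fixed points of $\Hex_n$ lie in a common embedded $TR_p$, so no second kind of move is needed at all. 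Your fallback — an $\operatorname{Aut}(C_p)$-twisted, tower-reversing symmetry — is left unconstructed, and even granting its existence you never verify that the $TR$-edges together with one flip place all $3n$ fixed points in a single orbit; so what you have is a plan for a proof rather than a proof.

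Moreover, the worry that drives you to the flip cannot occur in $\Hex_n$, and the paper already contains the invariant that shows this. In the proof of Proposition \ref{hex2}, each fixed point of an oriented $C_p$-surface is assigned a local rotation element of $C_p$, and the computation there yields the tuple $(g,g,g,g,g,g)$ for $\Hex_2$; the same computation on the tower picture gives $(g,\dots,g)$ for every $\Hex_n$, i.e.\ all $3n$ fixed points carry the \emph{same} rotation element. By contrast, the two ends of an embedded $R_{p,(i)}$ in an orientable $C_p$-surface carry mutually inverse rotation elements --- they are the two poles of $S^{2,1}_{(i)}$, compare the tuple $(g,g,g,g^2,g^2,g^2)$ for $\Sph_4[6]$ --- and $g\neq g^{-1}$ since $p$ is odd. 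Hence the $R_p$ alternative in Lemma \ref{tube} can never arise for a pair of fixed points of $\Hex_n$: every pair is joined by an $EB_p$ whose neighborhood is a $TR_{p,(i)}$, the graph $\Gamma$ is complete, and Lemma \ref{hexmap} finishes the argument with no induction on $n$ and no flip. This also explains the contrast you correctly sense at the end: in $\Hex_{n-1}+[R_p]$ the added ribbon introduces a fixed point with the inverse rotation element, which is exactly why twisted ribbon surgery there depends on the chosen fixed point (Example \ref{Bn}, Lemma \ref{twoTR3isoclases}), while on $\Hex_n$ it does not.
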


The proof is almost identical to that of Lemma \ref{M1[3]+[TAT]}. The idea is that any two fixed points in $\Hex_n$ are contained in a copy of $TR_p$. More specifically, this argument shows that $\Hex_n+_x[TR_p]\cong \left(\Hex_{n-1}+(k+2)[R_p]\right)\#_pM_g$. 

\begin{figure}
\begin{center}
\includegraphics[scale=.3]{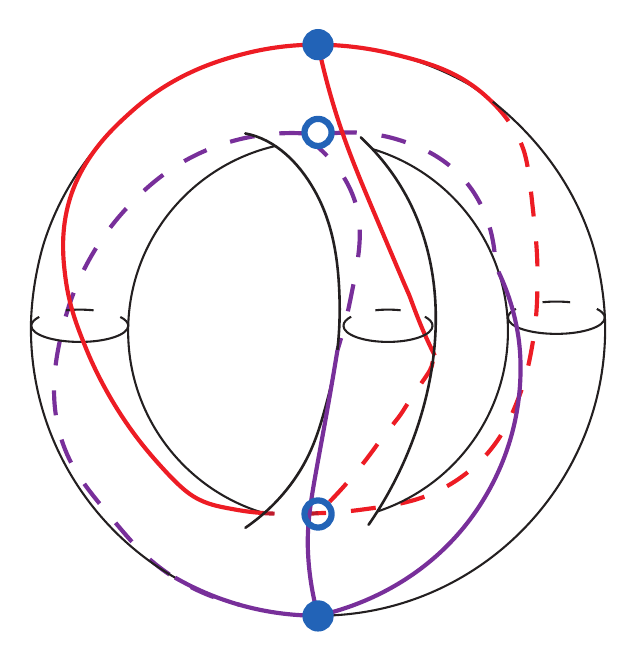}
\end{center}
\caption{\label{M1lemma} Two copies of $TR_3$ inside $\Sph_2[4]$.}
\end{figure}

\begin{lemma}\label{M2k+[TAT]}
If $x$ and $y$ are distinct fixed points in $\Sph_{(p-1)k+pg}[2k+2]$ for some $k,g\geq 0$, then $\Sph_{(p-1)k+pg}[2k+2]+_x[TR_p]\cong \Sph_{(p-1)k+pg}[2k+2]+_y[TR_p]$. In other words, twisted ribbon surgery on $\Sph_{(p-q)k+pg}[2k+2]$ is independent of the chosen fixed point.
\end{lemma}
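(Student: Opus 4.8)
The plan is to establish the sharper statement that $\left(S^{2,1}+k[R_p]\right)+_x[TR_p]\cong \Hex_1+k[R_p]$ for \emph{every} fixed point $x$; since the right-hand side is manifestly independent of $x$, the stated independence follows at once. Throughout I work up to the action of $\operatorname{Aut}(C_p)$, as in Theorem \ref{orientableclassification}: every fixed point of $S^{2,1}+k[R_p]$ has the same local rotation number up to sign (the two poles of each $S^{2,1}_{(i)}$-piece rotate by $e^{\pm 2\pi i/p}$), so the piece $TR_p$ glued in at $x$ is well defined independently of the choice of $x$. I first reduce to $g=0$: the connected sum $\#_pM_g$ is performed in the free part and creates no new fixed points, so the fixed points of $\Sph_{(p-1)k+pg}[2k+2]$ are exactly those of $S^{2,1}+k[R_p]$; moreover $+_x[TR_p]$ commutes with $\#_pM_g$, whence $\Sph_{(p-1)k+pg}[2k+2]+_x[TR_p]\cong\left(\left(S^{2,1}+k[R_p]\right)+_x[TR_p]\right)\#_pM_g$, and it suffices to treat $\left(S^{2,1}+k[R_p]\right)+_x[TR_p]$.

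I then induct on $k$. When $k=0$ the claim is the observation that $S^{2,1}+_N[TR_p]\cong S^{2,1}+_S[TR_p]\cong\Hex_1$, so both fixed points yield $\Hex_1=\Hex_1+0[R_p]$. For the inductive step write $S^{2,1}+k[R_p]=X'+[R_p]$ with $X'=S^{2,1}+(k-1)[R_p]$; the $2k$ fixed points of $X'$ persist, and the two remaining fixed points are the poles of the final ribbon. In the first case, where $x$ is a fixed point of $X'$, choose the disks of the final ribbon surgery away from a fixed neighborhood of $x$ (permissible by Corollary \ref{independence of disk choice}). Then $+_x[TR_p]$ and $+[R_p]$ have disjoint support and commute, so $\left(X'+[R_p]\right)+_x[TR_p]\cong\left(X'+_x[TR_p]\right)+[R_p]$; the inductive hypothesis gives $X'+_x[TR_p]\cong\Hex_1+(k-1)[R_p]$, and attaching the final ribbon produces $\Hex_1+k[R_p]$.

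The remaining case, where $x$ is a pole of the final ribbon $R_{p,(i)}$, is the crux. Since $R_{p,(i)}$ is $S^{2,1}_{(i)}$ with $p$ conjugate disks deleted from its free part, a fixed neighborhood of the pole $x$ misses those disks, so $+_x[TR_p]$ modifies only this ribbon, replacing it with $\left(S^{2,1}+_x[TR_p]\right)$ minus $p$ disks, that is, a copy of $\Hex_1$ with $p$ conjugate disks removed. Hence $\left(X'+[R_p]\right)+_x[TR_p]$ is obtained from $X'$ by deleting $p$ conjugate disks and gluing in this punctured $\Hex_1$. Because gluing two punctured surfaces along their $p$ boundary circles is symmetric in the two pieces, the same space is described by deleting $p$ conjugate disks from $\Hex_1$ and gluing in $X'$ minus $p$ disks. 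Placing those disks on the base sphere of $X'=S^{2,1}+(k-1)[R_p]$ and invoking Corollary \ref{independence of disk choice} (so that ribbon surgery is independent of, and associative in, the chosen disks), the base sphere of $X'$ glued to $\Hex_1$ is one ribbon surgery while the remaining $(k-1)$ ribbons of $X'$ ride along, identifying the space with $\Hex_1+[R_p]+(k-1)[R_p]=\Hex_1+k[R_p]$.

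I expect the genuine difficulty to lie entirely in this last case: one must argue that interchanging the two punctured pieces and redistributing the ribbon attachments preserves the equivariant isomorphism type, and this is precisely where the disk-independence results of Section \ref{invarianceresults} do the work. By contrast, the reductions and the first case are formal consequences of the fact that $+_x[TR_p]$ has support in a fixed neighborhood while $\#_pM_g$ and $+[R_p]$ can be arranged in the complementary free region.
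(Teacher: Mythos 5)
Your reductions (to $g=0$, via commutation of $+_x[TR_p]$ with $\#_pM_g$) and your first inductive case (where $x$ is a fixed point of $X'=S^{2,1}+(k-1)[R_p]$, using disjoint supports and Corollary \ref{independence of disk choice}) are sound and consistent with the paper's commutation results. But the case you yourself identify as the crux contains a genuine gap, and it is exactly the point where the paper must do real work. When $x$ is a pole of the final ribbon, you replace $R_{p,(i)}$ together with the glued $TR_p$ by ``a copy of $\Hex_1$ with $p$ conjugate disks removed,'' treating the north and south poles of that ribbon identically. Abstractly both $R_{p,(i)}+_N[TR_p]$ and $R_{p,(i)}+_S[TR_p]$ are punctured copies of $\Hex_1$, but the two identifications with a standard punctured $\Hex_1$ differ by an orientation-reversing map, and the isomorphism type of the glued-up total surface depends on the equivariant isotopy class of the boundary identification, not merely on the abstract pieces --- this is the same phenomenon as in Lemma \ref{tube}, where the two gluings of identical pieces produce the non-isomorphic $R_p$ and $TR_p$. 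Corollary \ref{independence of disk choice} lets you move disks around; it does not change the gluing class, so neither it nor your ``symmetry of gluing'' step addresses this. That the issue is real and not pedantic is shown by Example \ref{Bn} and Lemma \ref{twoTR3isoclases}: your replacement step, applied verbatim to the two poles $a$ and $b$ of the ribbon in $\Hex_1+[R_p]$, would declare the two results isomorphic, yet $+_a[TR_p]$ yields $\Sph_{2(p-1)}[6]$ while $+_b[TR_p]$ yields $\Hex_2$, which are non-isomorphic by Proposition \ref{hex2}. As written, your argument proves too much.

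What is missing is precisely the second half of the paper's proof. The paper proceeds by constructing automorphisms of the ambient surface itself: for two poles of the same type it finds a path $\alpha$ whose conjugates have a neighborhood isomorphic to $TR_p$ (verified by counting boundary components) and applies the boundary-fixing swap of Lemma \ref{hexmap}; for a north pole versus a south pole it exhibits an explicit reflection of the entire surface through the equatorial plane (Figures \ref{M2kiso} and \ref{M2kiso2}). Your crux case can be repaired along these lines by showing that $S^{2,1}+(k-1)[R_p]$ admits an equivariant, orientation-reversing automorphism preserving the gluing circles --- available here because every piece is a sphere with a mirror symmetry, but unavailable when the base is $\Hex_1$, which is exactly why the dichotomy of Lemma \ref{twoTR3isoclases} exists. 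Without some such input, the north/south asymmetry of the final ribbon is not controlled and the induction does not close.
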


\begin{proof}
We can choose to represent $\Sph_{(p-1)k+pg}[2k+2]$ in the following way: 
\begin{enumerate}
\item Start with $S^{2,1}$. 
\item Choose $k+1$ disks $D_1,\dots D_{k+1}$ centered at the equator of $S^{2,1}$ so that $\sigma^s D_i\cap \sigma^{s^\prime}D_j=\emptyset$ for all $i,j,s,s^\prime$.
\item Perform $\#_pM_g$-surgery using $D_{k+1}$ and its conjugates.
\item Remove $D_1,\dots ,D_k$ and their conjugates to perform $+[R_p]$-surgery $k$ times. Let ${R_p}_i$ denote the copy of $R_p$ glued to the boundary of $D_i\cup\sigma D_i\cup\cdots \cup \sigma^{p-1} D_i$.
\end{enumerate} 
Suppose each copy of $R_p$ is glued onto $S^{2,1}$ as shown in Figure \ref{ATgluing}. We call $a$ the ``north pole'' of $R_p$ and $b$ the ``south pole''. Figures \ref{M2kTAT} and \ref{M2kTAT2} depict a path $\alpha$ (in green) from the north pole of ${R_p}_i$ for some $i$ to the north pole of $S^{2,1}$ or ${R_p}_j$ for some $j$. This figure only shows the path in the case where $k=2$ and $g=0$, but in all other cases a similar path can be chosen. Observe that a neighborhood of $\alpha\cup \sigma \alpha\cup\cdots\cup \sigma^{p-1} \alpha$ is isomorphic to $TR_p$. This can be verified by checking that this neighborhood has only a single boundary component. In this case, we know there exists an automorphism of $\Sph_{(p-1)k+pg}[2k+2]$ swapping the two north poles. Similarly, if given two south poles we can find a copy of $TR_p$ containing them. Thus if $x$ and $y$ are both north poles (respectively south poles), then $\Sph_{(p-1)k+pg}[2k+2]+_x[TR_p]\cong \Sph_{(p-1)k+pg}[2k+2]+_y[TR_p]$. 

\begin{figure}
\begin{center}
\includegraphics[scale=.5]{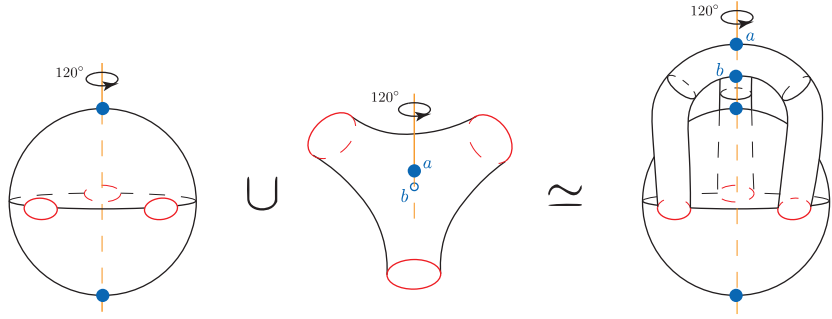}
\end{center}
\caption{\label{ATgluing} A representation of $\Sph_2[4]$ using $+[R_3]$-surgery on $S^{2,1}$.}
\end{figure}

\begin{figure}
\begin{center}
\includegraphics[scale=.33]{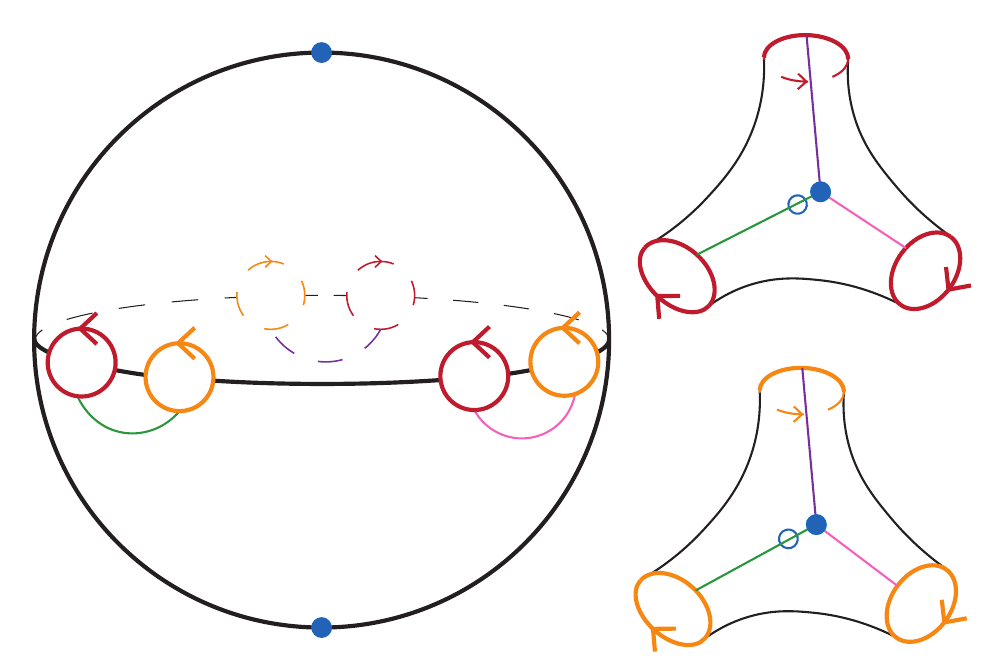}
\end{center}
\caption{\label{M2kTAT}A neighborhood of the copy of $EB$ shown here is isomorphic to $TR_3$.}
\end{figure}

\begin{figure}
\begin{center}
\includegraphics[scale=.33]{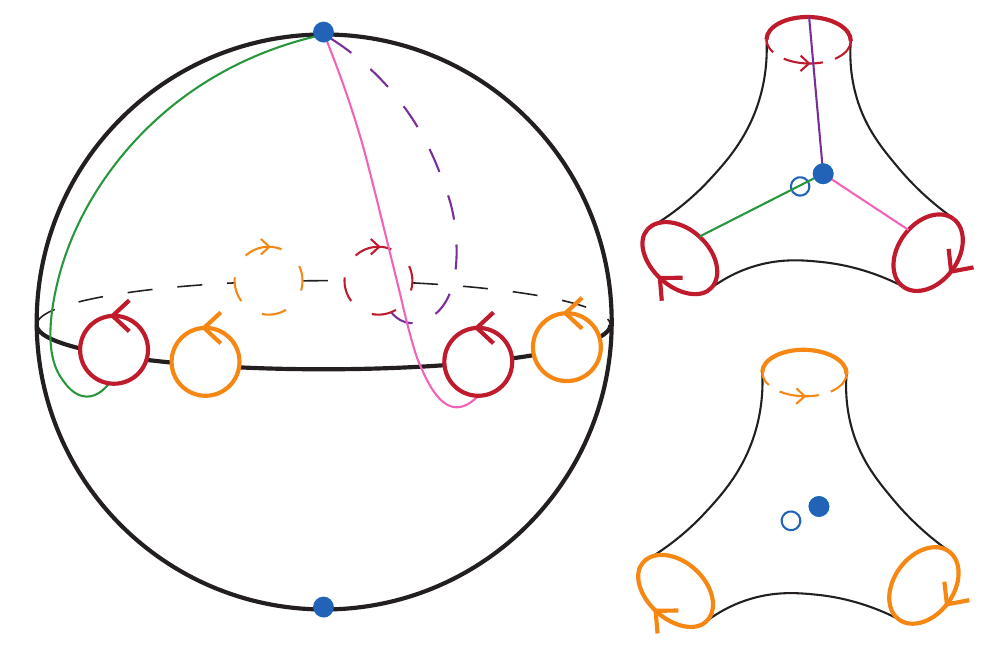}
\end{center}
\caption{\label{M2kTAT2} A neighborhood of the copy of $EB$ shown here is isomorphic to $TR_3$.}
\end{figure}

It remains to show that if $x$ is a north pole and $y$ is a south pole, then twisted ribbon surgery on $\Sph_{(p-1)k+pg}[2k+2]+_x[TR_p]$ at the points $x$ and $y$ result in isomorphic spaces. We will show this by considering the case $x=a$ and $y=b^\prime$ as depicted in Figures \ref{M2kiso} and \ref{M2kiso2}. The argument for cases when $k>1$ or $g>0$ are similar. If we can show the isomorphism in this case, then for any north pole $x^\prime$ and any south pole and $y^\prime$, we have
\begin{align*}
\Sph_{(p-1)k+pg}[2k+2]+_{x^\prime}[TR_p]&\cong \Sph_{(p-1)k+pg}[2k+2]+_x[TR_p] \\
&\cong \Sph_{(p-1)k+pg}[2k+2]+_y[TR_p] \\
&\cong \Sph_{(p-1)k+pg}[2k+2]+_{y^\prime}[TR_p].
\end{align*}
Figure \ref{M2kiso} depicts the result of $+_a[TR_3]$-surgery on $M_2[4]$, and Figure \ref{M2kiso2} shows $M_2[4]+_{b^\prime}[TR_3]$. We can construct an isomorphism between these spaces as reflection through the plane of the hexagon. 

\begin{figure}
\begin{center}
\includegraphics[scale=.45]{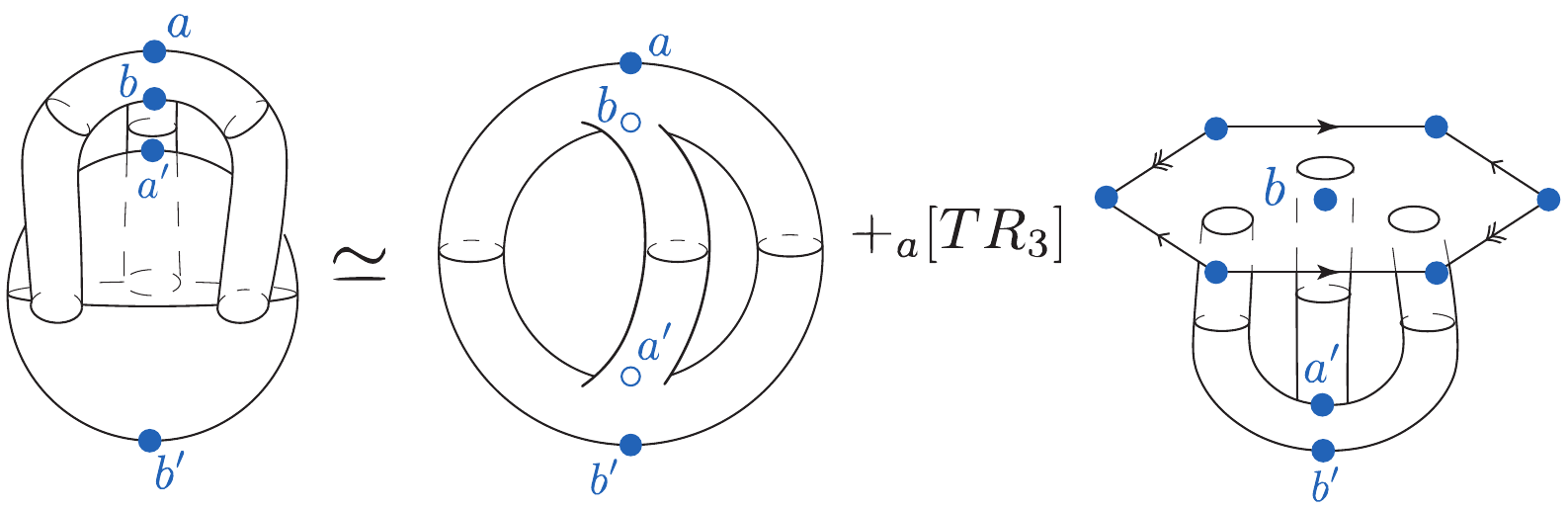}
\end{center}
\caption{\label{M2kiso} The result of $+_a[TR_3]$-surgery on $\Sph_{2k+3g}[2k+2]$ for $k=1$, $g=0$.}
\end{figure}

\begin{figure}
\begin{center}
\includegraphics[scale=.45]{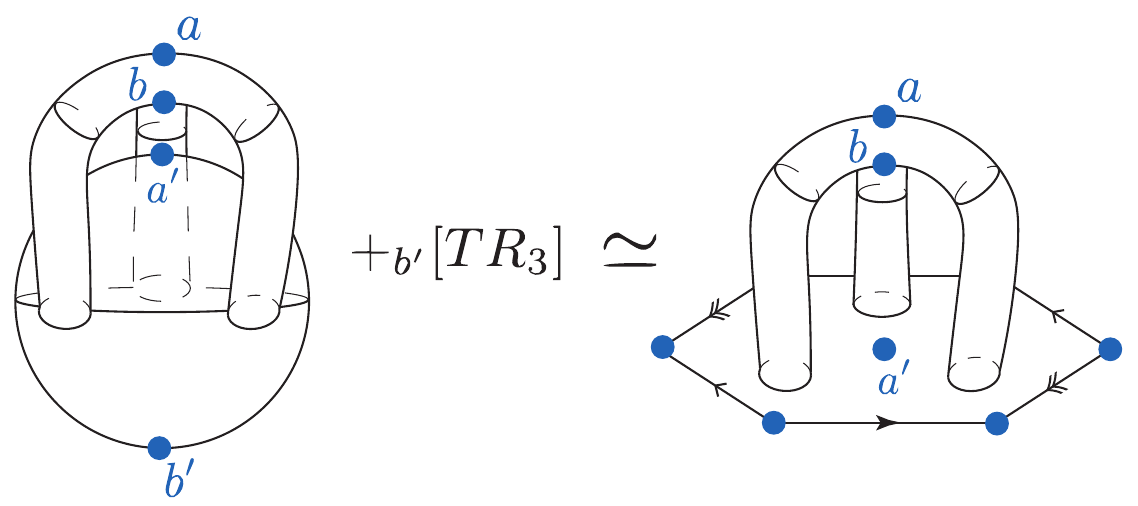}
\end{center}
\caption{\label{M2kiso2} The result of $+_{b^\prime}[TR_3]$-surgery on $\Sph_{2k+3g}[2k+2]$ for $k=1$, $g=0$.}
\end{figure}
\end{proof}

So far we have proven that $X+_x[TR_p]$ is independent of $x$ when $X$ is of the form $\Hex_n\#_p M_g$ or $S^{2,1}+[R_p]\#_p M_g$. We will now spend some time understanding when twisted ribbon surgery fails to be independent of its chosen fixed point. 


\begin{proposition}\label{hex2}
There does not exist an equivariant isomorphism between the $C_p$-spaces $\Sph_{2(p-1)}[6]$ and $\Hex_2$ (even up to the action of $\operatorname{Aut}(C_p)$).
\end{proposition}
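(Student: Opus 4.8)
The plan is to separate the two surfaces using the branch data of the associated $C_p$-covering, namely the multiset of local rotation numbers at the fixed points, recorded up to the scaling action of $(\Z/p)^\times$. Near each fixed point $x$ the generator $\sigma$ acts as a rotation by $2\pi r_x/p$ for a well-defined $r_x\in\Z/p\setminus\{0\}$. An equivariant isomorphism --- even one allowing orientation reversal and a twist by some $\phi\in\operatorname{Aut}(C_p)$ --- sends fixed points to fixed points and conjugates the germ of the action at $x$ to the germ at its image, so it rescales every $r_x$ by one common unit $u\in(\Z/p)^\times$ (the global orientation accounts for a factor $\pm1$ and $\phi$ for the rest). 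Hence the multiset $\{r_x\}$, taken modulo simultaneous scaling by $(\Z/p)^\times$, is an invariant that is insensitive to $\operatorname{Aut}(C_p)$; it suffices to compute it for each surface and find a scaling-invariant discrepancy.

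First I would identify the quotient. Both $\Sph_{2(p-1)}[6]=S^{2,1}+2[R_p]$ and $\Hex_2$ have $F=6$ fixed points and underlying surface $M_{2(p-1)}$ --- this is exactly why $\beta$ and $F$ alone cannot separate them (cf.\ Remark \ref{badnews}) --- so $\chi=2-4(p-1)$. Because a rotation action preserves orientation, Riemann--Hurwitz reads $\chi(X)=p\,\chi(X/C_p)-(p-1)F$, and substituting gives $\chi(X/C_p)=2$. Thus each surface is a connected regular $C_p$-cover of $S^2$ branched over the six images of its fixed points, and the six rotation numbers satisfy the single relation $\sum_x r_x\equiv 0\pmod p$ (the product-of-meridians relation in $\pi_1$ of the six-punctured sphere).

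Next I would tabulate the two multisets. For $\Sph_{2(p-1)}[6]$ it is immediate that the poles of $S^{2,1}$ and of each adjoined ribbon contribute antipodal pairs, giving $\{1,1,1,-1,-1,-1\}$. For $\Hex_2$ I would propagate rotation numbers through its inductive construction using three rules: (i) the poles of $S^{2,1}$ and of a glued $R_p$ occur as pairs $\{i,-i\}$; (ii) the two fixed points of $TR_{p,(i)}$ carry \emph{equal} rotation numbers, because the swap automorphism of Lemma \ref{hexmap} fixes $\partial TR_{p,(i)}$ pointwise, is therefore orientation preserving, and so conjugates the germ at one fixed point to the germ at the other; and (iii) combining (ii) with $\sum_x r_x\equiv0$, a twisted ribbon surgery $+_x[TR_p]$ at a fixed point of rotation number $r$ deletes that point and inserts two points of rotation number $r\cdot 2^{-1}\pmod p$. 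Writing $s:=-2^{-1}$, this gives rotation numbers $\{1,s,s\}$ for $\Hex_1=S^{2,1}+_S[TR_p]$, then $\{1,s,s,1,-1\}$ after adjoining a ribbon, and finally $\{1,1,s,s,s,s\}$ after performing $+_b[TR_p]$ at the fixed point $b$ of rotation number $-1$ (the choice that by construction yields $\Hex_2$ rather than $\Sph_{2(p-1)}[6]$).

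Finally I would compare. The multiset of $\Sph_{2(p-1)}[6]$ has multiplicity pattern $(3,3)$, whereas that of $\Hex_2$ has pattern $(2,4)$ for $p>3$ and $(6)$ for $p=3$; equivalently, the rotation numbers of $\Sph_{2(p-1)}[6]$ split into three pairs each summing to $0$, while those of $\Hex_2$ admit no such pairing, since a partner for $1$ would have to be $-1$ and $-1\notin\{1,s\}$ (as $s\neq-1$ for every odd prime). Scaling by a unit $u$ merely relabels the values and preserves both features, so no $u$ can match the two multisets; therefore $\Sph_{2(p-1)}[6]\not\cong\Hex_2$ even up to $\operatorname{Aut}(C_p)$. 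I expect the main obstacle to be rules (ii)--(iii): pinning down that the two fixed points of $TR_{p,(i)}$ share a rotation number and identifying precisely which fixed point $b$ defines $\Hex_2$. Once the orientation argument behind Lemma \ref{hexmap} is in place, the remaining bookkeeping is forced by the relation $\sum_x r_x\equiv0$.
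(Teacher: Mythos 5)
Your proposal is correct in outline and, at its core, is the same argument as the paper's: your ``multiset of rotation numbers up to simultaneous scaling by $(\Z/p)^\times$'' is precisely the paper's invariant, namely the map $X^{C_p}\to C_p$ obtained by lifting a small positively oriented loop around the image of each fixed point in $X/C_p$ (the paper's label $g$ at a fixed point is the holonomy $\sigma^{r_x^{-1}}$, i.e.\ your datum up to inversion), and your final comparison --- pattern $(3,3)$ versus $(2,4)$, or $(6)$ when $p=3$ --- is exactly the paper's comparison of $(g,g,g,g^2,g^2,g^2)$ with $(g,g,g,g,g,g)$. Where you genuinely differ is in how the invariant is computed: the paper lifts explicit loops in pictures of $\Hex_2$ and $\Sph_4[6]$ for $p=3$, asserts the general prime is similar, and quotes Ding's theorem (which is only needed for the converse, completeness direction, not for distinguishing); you instead derive the values structurally from Riemann--Hurwitz, the meridian relation on the six-punctured sphere, and propagation rules through the surgeries, uniformly in $p$. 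That is a real gain in cleanliness and generality.

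Two repairs are needed. First, a convention slip: the product-of-meridians relation constrains the holonomies $r_x^{-1}$, not the rotation numbers $r_x$. For example the true rotation numbers of $\Hex_1$ are $\{1,-2,-2\}$, whose sum is $-3\not\equiv 0 \pmod p$ for $p>3$; accordingly, rule (iii) should \emph{double} rotation numbers rather than halve them. Equivalently, your computation is correct verbatim if every value in it is read as a holonomy. Either reading leaves the conclusion intact, because inversion $r\mapsto r^{-1}$ preserves multiplicity patterns and carries antipodal pairs to antipodal pairs. Second, the step you defer --- pinning down the local datum at the specific point $b$ defining $\Hex_2$ --- is not residual bookkeeping but the crux of the proposition. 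Performing the twisted ribbon surgery at the ribbon pole of holonomy $+1$ instead yields the multiset $\{1,-1,s,-s,s,-s\}$, which \emph{is} the $\Sph_{2(p-1)}[6]$ multiset (three antipodal pairs); so your propagation rules by themselves cannot decide which of the two candidate multisets belongs to $\Hex_2$, and choosing wrongly would ``prove'' nothing. That identification must be read off from the defining picture of $\Hex_2$ (the tower of two $2p$-gons), and carrying out that verification is exactly what the paper's explicit loop-lifting computation does. With those two points addressed, your argument is complete and covers all odd primes at once.
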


\begin{proof}
Let $X$ be a nontrivial, orientable $C_p$-space, and let $X^{C_p}$ denote the fixed set of $X$. We start by defining a map $X^{C_p}\rightarrow C_p$. Fix an orientation for $X$, and consider the induced orientation on $X/C_p$. For each fixed point $x\in X^{C_p}$, let $\bar{x}$ represent the image of $x$ in $X/C_p$. Choose a small loop going around $\bar{x}$ in the direction of the chosen orientation. We can then lift this loop to a path in $X$ going from a point $y$ to $gy$ for some $g\in C_p$. Note that the element $g$ is independent of choice for $y$. In this way, we can define the map $X^{C_p}\rightarrow C_p$ given by $x\mapsto g$. 

Theorem 1.1 of \cite{Ding} states that this map determines the $C_p$-space $X$ up to isomorphism. 

Let us now turn our attention to $\Hex_2$ and $\Sph_{2(p-1)}[6]$. We will show by direct computation that the maps $\Hex_2^{C_p}\rightarrow C_p$ and $\Sph_{2(p-1)}[6]^{C_p}\rightarrow C_p$ as defined above must be distinct.  

We focus our attention on the $p=3$ case since the argument can be extended to all odd primes. Let $g$ be the generator of $C_3$ corresponding to counter-clockwise rotation of $\Hex_2$ by $120^\circ$ about the axis passing through the center of the hexagons. Figure \ref{class1} demonstrates that for any fixed point $x\in\Hex_2$, the image of $x$ under the above map is $g$. 

To see this, start by labeling the six fixed points of $\Hex_2$ as $x_1,\dots ,x_6$. Next choose an orientation for $\Hex_2$ and consider the induced orientation on $\Hex_2/C_3\simeq S^2$. Figure \ref{class1} depicts $\Hex_2$ (left) and $S^2$ (right) with the chosen orientation in gray. We can then choose a loop in the direction of the orientation about the image of each $x_i$ in $S^2$. Each of these loops can be lifted to some path in $\Hex_2$. Let $\tilde{x}_i$ ($1\leq i\leq 6$) denote the starting point of the path lifted from the $i$th loop. Figure \ref{class1} demonstrates that for each $i$, we get a path from $\tilde{x}_i$ to $g\tilde{x}_i$. 

For example, the green loop on the right of Figure \ref{class1} goes about the fixed point $x_1$. We can lift it to the green path in $\Hex_2$. This path starts at the point labeled $\tilde{x}_1$ and ends at the image of $\tilde{x}_1$ under the action of $g$. So our map in this case sends $x_1$ to $g$. Since $\Hex_2^{C_3}$ just consists of the fixed points $x_1,x_2,\dots ,x_6$, we can describe the above map as the tuple $(g,g,g,g,g,g)$. 

Let us now choose to represent the space $\Sph_4[6]$ as depicted on the left of Figure \ref{class2}. Let $g$ represent counter-clockwise rotation of $\Sph_{4}[6]$ by $120^\circ$ about the axis passing through the center of the hexagons, and label the six fixed points as $x_1,x_2,\dots ,x_6$. We can then fix an orientation for $\Sph_4[6]$ and choose oriented paths about the image of $x_i$ in $\Sph_4[6]/C_3\simeq S^2$ for each $i$. As before, we lift each of these loops to a path starting at the point $\tilde{x}_i$, and we look at the endpoint of each lifted path. Figure \ref{class2} demonstrates that these endpoints are $g\tilde{x}_1$, $g\tilde{x}_2$, $g\tilde{x}_3$, $g^2\tilde{x}_4$, $g^2\tilde{x}_5$, and $g^2\tilde{x}_6$. Another way to represent this map $\Sph_4[6]^{C_3}\rightarrow C_3$ is with the tuple $(g,g,g,g^2,g^2,g^2)$. 

\begin{figure}
\begin{center}
\includegraphics[scale=.5]{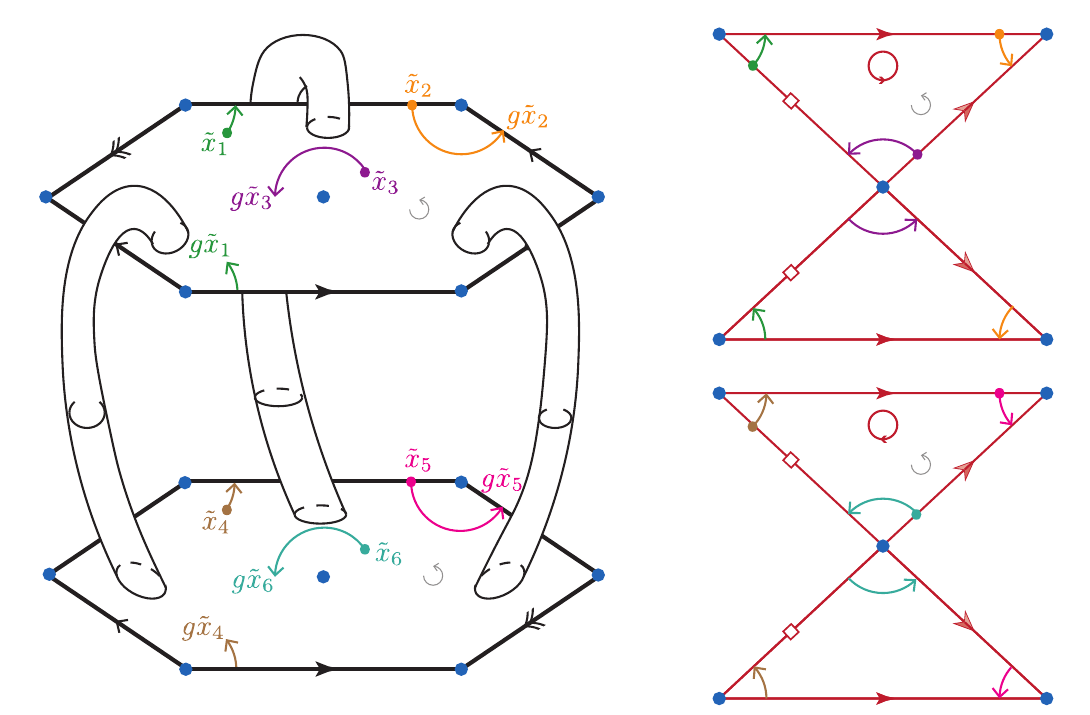}
\end{center}
\caption{\label{class1} The map $\Hex_2^{C_3}\rightarrow C_3$, described by the tuple $(g,g,g,g,g,g)$.}
\end{figure}

\begin{figure}
\begin{center}
\includegraphics[scale=.5]{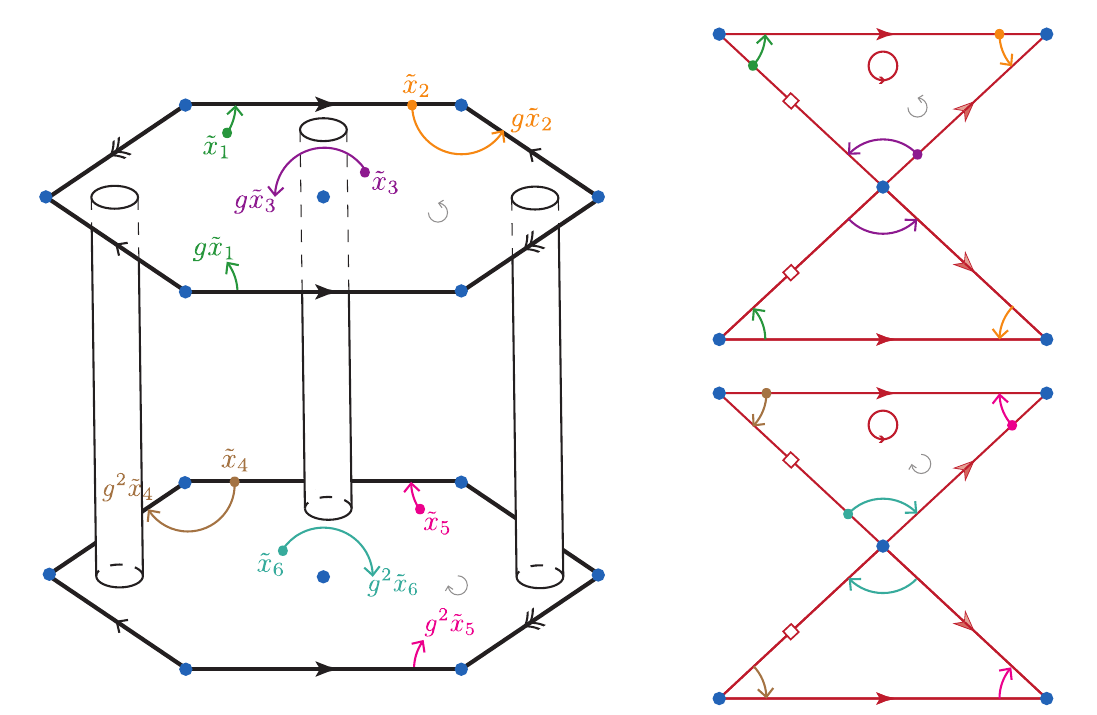}
\end{center}
\caption{\label{class2} The map $\Sph_4[6]^{C_3}\rightarrow C_3$, described by the tuple $(g,g,g,g^2,g^2,g^2)$.}
\end{figure}

Even up to a relabeling of the fixed points and an action of $\operatorname{Aut}(C_3)$, the maps described by $(g,g,g,g,g,g)$ and $(g,g,g,g^2,g^2,g^2)$ must be distinct. In other words, it cannot be the case that $\Sph_4[6]$ and $\Hex_2$ are isomorphic.
\end{proof}

More generally, the same argument shows that $\Hex_2+k[R_p]\#_p M_g$ is not isomorphic to $\Sph_4[6]+k[R_p]\#_p M_g$ for any $k$,$g$. 

\begin{remark}
The same methods can also be used to show $\Hex_{n_1}+k_1[R_p]\#_p M_{g_1}$, $\Hex_{n_2}+k_2[R_p]\#_p M_{g_2}$, and $S^{2,1}+k_3[R_p]\#_p M_{g_3}$ are always in distinct isomorphism classes (unless of course $n_1=n_2$, $k_1=k_2$, and $g_1=g_2$). 
\end{remark}

\begin{lemma}\label{twoTR3isoclases}
When $k\geq 1$, there are two isomorphism classes of $C_p$-spaces of the form $\Hex_{1,(p-1)/2+(p-1)k+pg}[3+2k]+_x[TR_p]$ which depend on the choice of fixed point $x$. In particular, given a fixed point $x$, $\Hex_{1,(p-1)/2+(p-1)k+pg}[3+2k]+_x[TR_p]$ is isomorphic to one of the following:
\begin{enumerate}
\item $\Sph_{(p-1)(k+1)+pg}[2+2(k+1)]$
\item $\left(\Hex_2+(k-1)[R_p]\right)\#_p M_g$
\end{enumerate}
\end{lemma}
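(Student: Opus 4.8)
The plan is to argue entirely through the local rotation data at fixed points, exactly as in the proof of Proposition \ref{hex2}. Recall that for a nontrivial orientable $C_p$-surface $X$ one has the map $\rho\colon X^{C_p}\to C_p$ sending a fixed point to the deck element obtained by lifting a small positively oriented loop about its image in $X/C_p$. By Theorem 1.1 of \cite{Ding}, two orientable $C_p$-surfaces of equal genus are isomorphic if and only if their rotation multisets $\rho$ agree up to relabeling of fixed points and the action of $\operatorname{Aut}(C_p)$. Writing $W:=\Hex_1+k[R_p]\#_pM_g$, every space in the statement has the same $\beta$-genus $2\big((p-1)(k+1)+pg\big)$ and the same number $2k+4$ of fixed points, so it suffices to compute $\rho\big(W+_x[TR_p]\big)$ for each fixed point $x$ and match it against the two candidates.

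First I would record the building blocks of $\rho$, normalized so that each piece is glued with twist $1$. The two poles of $S^{2,1}$ carry opposite rotation numbers and so contribute the balanced pair $\{1,-1\}$, and each $+[R_p]$ attaches a ribbon whose two new poles again contribute $\{1,-1\}$. By contrast, the two fixed points created by a twisted ribbon carry \emph{equal} rotation numbers: the $180^\circ$ swap automorphism of Lemma \ref{hexmap} is orientation preserving and interchanges them, forcing $\rho$ to agree at both. A short local orientation computation at the glued boundary circle then shows that $+_x[TR_p]$ deletes the value $r:=\rho(x)$ and inserts two copies of $-r$. Applying this to $\Hex_1=S^{2,1}+_S[TR_p]$ gives $\rho(\Hex_1)=\{1,1,1\}$, and applying it once more recovers $\rho(\Hex_2)=\{1,1,1,1,1,1\}$ as computed in Proposition \ref{hex2}. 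Hence $\rho(W)=\{1^{\,3+k},(-1)^{\,k}\}$, and in particular every fixed point of $W$ has rotation number $1$ or $-1$.

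Now I would split into the two cases. If $\rho(x)=1$ then $+_x[TR_p]$ removes a $1$ and inserts two $-1$'s, producing the balanced multiset $\{1^{\,k+2},(-1)^{\,k+2}\}$, which is precisely $\rho\big((S^{2,1}+(k+1)[R_p])\#_pM_g\big)$; if $\rho(x)=-1$ it produces $\{1^{\,k+5},(-1)^{\,k-1}\}=\rho\big((\Hex_2+(k-1)[R_p])\#_pM_g\big)$. By \cite{Ding} these identify $W+_x[TR_p]$ with case (1) or case (2) respectively. Since $k\geq 1$, the space $W$ has at least $3+k\geq 4$ fixed points of rotation number $1$ and at least $k\geq 1$ of rotation number $-1$, so both outcomes genuinely occur; the first multiset is balanced and the second is not, so they are inequivalent under $\operatorname{Aut}(C_p)$ and the two outcomes are non-isomorphic --- this is exactly the generalization of Proposition \ref{hex2} already recorded after that proof. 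Thus there are precisely the two isomorphism classes claimed.

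As a hands-on cross-check that these classes really are the stated surgery spaces, I would also use that $+_x[TR_p]$ commutes with any surgery supported away from $x$ (the same localization principle invoked for $\#_p$ with $\pm[R_p]$ and $\pm[TR_p]$). When $x$ is one of the three fixed points inherited from $\Hex_1$, commuting $+_x[TR_p]$ past the $k$ ribbon surgeries and the connected sum and applying Lemma \ref{M1[3]+[TAT]} gives $\big(S^{2,1}+[R_p]+k[R_p]\big)\#_pM_g$, i.e.\ case (1); when $x$ is the ``south pole'' of a ribbon, the same commutation together with the definition $\Hex_2=\Hex_1+[R_p]+_b[TR_p]$ from Example \ref{Bn} gives $\big(\Hex_2+(k-1)[R_p]\big)\#_pM_g$, i.e.\ case (2). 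The main obstacle I anticipate is the rotation-number rule for twisted ribbon surgery: pinning the sign ($-r$ rather than $r$) demands a careful orientation computation at the gluing circle, and one must verify that the ``twist $1$'' normalization can be arranged on every piece simultaneously via the allowed $\operatorname{Aut}(C_p)$ action, so that $\rho(W)$ really takes only the values $\pm1$. Once that rule is established, everything else is the bookkeeping above.
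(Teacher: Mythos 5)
Your overall strategy --- compute the complete fixed-point datum $\rho\colon X^{C_p}\to C_p$ and invoke the classification of \cite{Ding} in both directions --- is a legitimate alternative to the paper's argument, which instead exhibits explicit copies of $TR_p$ containing pairs of fixed points (Figure \ref{M3TAT}) and uses the swap automorphism of Lemma \ref{hexmap} to show there are at most two classes, then identifies the outcomes via Lemma \ref{M1[3]+[TAT]} and Example \ref{Bn} and separates them via Proposition \ref{hex2}. For $p=3$ your computation is correct and reproduces the paper's data exactly. But the lemma is asserted for every odd prime, and your key surgery rule fails for $p\geq 5$. For any closed orientable $C_p$-surface, the product of small loops about the branch points of $X/C_p$ is a product of commutators in $\pi_1$ of the punctured quotient, so its image in the abelian group $C_p$ vanishes: $\sum_{x\in X^{C_p}}\rho(x)\equiv 0\pmod p$. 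Your rule ``delete $r$, insert two copies of $-r$'' changes this sum by $-3r$, which is nonzero for $p>3$. Combined with your own (correct) observation that the orientation-preserving swap of Lemma \ref{hexmap} forces the two new fixed points to carry a common value $t$, the sum relation pins $t=2^{-1}r$, and $2^{-1}r=-r$ only when $p=3$. Downstream, $\rho(\Hex_1)=\{1,-2^{-1},-2^{-1}\}$ rather than $\{1,1,1\}$ (your multiset already violates the sum relation when $p>3$, having sum $3$), the fixed points of $W$ do not all carry values $\pm 1$, and the ribbons contribute pairs $\{j,-j\}$ with $j$ varying over the per-piece $\operatorname{Aut}(C_p)$ ambiguity built into the notation $+k[R_p]$. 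The clean split into the cases $\rho(x)=1$ and $\rho(x)=-1$, and the balanced-versus-unbalanced separation of the two outcomes, therefore collapse for $p\geq 5$ and would have to be redone with the corrected rule --- where, notably, the three points inherited from $\Hex_1$ no longer carry equal values, so the bookkeeping is genuinely different, not just a sign change.

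Two smaller points. First, your identification step needs the realization direction of Ding's theorem (equal genus plus matching rotation data implies isomorphism), and you should state it carefully: for strict equivariant isomorphism the multiset is only well defined up to relabeling and a global negation from the choice of orientation, while a global $\operatorname{Aut}(C_p)$ twist changes the strict isomorphism class; this is harmless here only because the lemma is stated up to $\operatorname{Aut}(C_p)$ on pieces. The paper never uses this direction --- Proposition \ref{hex2} uses only that $\rho$ is an invariant --- and otherwise argues geometrically. Second, your closing cross-check reproduces the identification half of the paper's proof (essentially Lemma \ref{M1[3]+[TAT]} for the inherited fixed points and Example \ref{Bn} for ribbon south poles) but not the counting half: it says nothing about ribbon north poles, which the paper handles by producing an explicit $EB_p$ through a north pole and the point $c$ whose neighborhood is a $TR_p$. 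In your main argument the north poles are covered only through the $p=3$-specific computation $\rho(\text{north pole})=1$, so as written the proposal establishes the lemma only for $p=3$.
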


\begin{proof}
We can represent $\Hex_{1,(p-1)/2+(p-1)k+pg}[3+2k]$ by choosing $k+1$ disks $D_1,\dots D_{k+1}$ on $\Hex_1$ so that $\sigma^s D_i\cap \sigma^{s^\prime}D_j=\emptyset$ for all $i,j,s,s^\prime$. Remove each $\sigma^jD_i$. Then attach a copy of $R_p$ (denoted ${R_p}_i$) to $\partial D_i\cup \partial\left(\sigma D_i\right)\cup\cdots\cup \partial \left(\sigma^{p-1} D_i\right)$ for each $i=1,\dots ,k$. Then attach a copy of $C_p\times \left(M_g\setminus D^2\right)$ to $\partial D_{k+1}\cup\partial \left(\sigma D_{k+1}\right)\cup\cdots\cup \partial \left(\sigma^{p-1} D_{k+1}\right)$. For simplicity of notation, we will let $X$ denote the space $\Hex_{1,(p-1)/2+(p-1)k+pg}[3+2k]$ for the remainder of the proof. 

A similar argument as in the previous case shows that if $x$ and $y$ are the north poles (respectively south poles) of ${R_p}_i$ and ${R_p}_j$ for some $i,j$, then we can find a copy of $TR_p$ containing $x$ and $y$. This implies that $X+_x[TR_p]\cong X+_y[TR_p]$ for all such $x$ and $y$. 

Let $a,b,c$ be the fixed points originating from the copy of $\Hex_1$ as depicted in Figure \ref{M3TAT}. This figure depicts a copy of $EB_p$ in $X$ containing the north pole of $(R_p)_1$ and $c$ with a neighborhood isomorphic to $TR_p$. Figure \ref{M3TAT} depicts the case $k=1,g=0$, but one could construct a similar copy of $EB_p$ in all other cases. Recall additionally from Lemma \ref{M1[3]+[TAT]} that there is a copy of $TR_p$ containing $a$ and $c$ as well as a copy containing $b$ and $c$. So we have that $X+_x[TR_p]\cong X+_y[TR_p]$ when $x,y\in \{\text{north pole of }(R_p)_i\mid 1\leq i\leq k\}\cup \{a,b,c\}$. This also holds if $x,y\in\{\text{south pole of }(R_p)_i\mid 1\leq i \leq k\}$. 

At this point we have demonstrated there are at most two isomorphism classes of $\Hex_{1,(p-1)/2+(p-1)k+pg}[3+2k]+_x[TR_p]$. We know from Lemma \ref{M1[3]+[TAT]} that $X+_c[TR_p]\cong \Sph_{p-1}[4]+k[R_p]\#_p M_g$. By construction in Example \ref{Bn}, we also know that $X+_x[TR_p]\cong \Hex_2+(k-1)[R_p]\#_p M_g$ when $x\in\{\text{south pole of }(R_p)_i\mid 1\leq i \leq k\}$.  

We know from Proposition \ref{hex2} and subsequent remarks that these spaces are not isomorphic. So there must be exactly two isomorphism classes of spaces of the form $\left(\Hex_{1,(p-1)/2+(p-1)k+pg}[3+2k]\right)+_?[TR_p]$.

\begin{figure}
\begin{center}
\includegraphics[scale=.6]{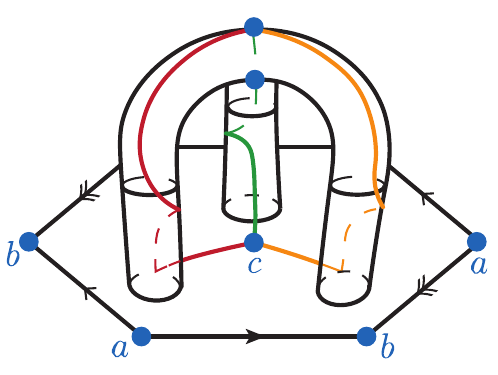}
\end{center}
\caption{\label{M3TAT} A copy of $EB_3$ whose neighborhood is isomorphic to $TR_3$.}
\end{figure}
\end{proof}

\begin{corollary}\label{twoTR3isoclasses2}
For $n\geq 2$ and $k\geq 1$, there are two isomorphism classes of $C_p$-spaces of the form $\left(\Hex_n+k[R_p]\right)\#_p M_g+_x[TR_p]$ which depend on the choice of fixed point $x$. Specifically, given a fixed point $x$, $\left(\Hex_n+k[R_p]\right)\#_p M_g+_x[TR_p]$ is isomorphic to one of the following:
\begin{enumerate}
\item $\left(\Hex_{n+1}+(k-1)[R_p]\right)\#_p M_g$
\item $\left(\Hex_{n-1}+(k+2)[R_p]\right)\#_p M_g$
\end{enumerate}
\end{corollary}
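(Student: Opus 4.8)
The plan is to run the same argument as in Lemma~\ref{twoTR3isoclases}, with the tower $\Hex_n$ playing the role that $\Hex_1$ plays there. First I would use the fact that equivariant connected sum commutes with both $+[R_p]$- and $+_x[TR_p]$-surgery (together with Corollary~\ref{independence of disk choice}) to strip off the $\#_p M_g$ factor, reducing everything to the statement about $Y := \Hex_n + k[R_p]$: since the connected sum is performed away from the fixed set, every fixed point of the full space is a fixed point of $Y$, and $\left(Y \#_p M_g\right)+_x[TR_p] \cong \left(Y +_x[TR_p]\right)\#_p M_g$. I would then model $Y$ concretely, as in Lemma~\ref{twoTR3isoclases}, by choosing $k$ disjoint families of conjugate disks on $\Hex_n$ lying in $\Hex_n\setminus \Hex_n^{C_p}$, removing them, and gluing in ribbons $(R_p)_1,\dots,(R_p)_k$, each contributing a ``north pole'' and a ``south pole.'' In this model $Y$ has precisely the $3n$ fixed points of the tower together with the $2k$ ribbon poles.

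Next I would establish that at most two isomorphism classes occur, exactly as in Lemmas~\ref{M2k+[TAT]} and~\ref{twoTR3isoclases}. The fixed points of $Y$ split into two types dictated by the orientation of the tower. For two fixed points $x,y$ of the same type, Proposition~\ref{EB in X} produces a simple path between them missing its conjugates, and I would check---by verifying that the suspended path $\alpha\cup\sigma\alpha\cup\cdots\cup\sigma^{p-1}\alpha$ has a single boundary component---that its neighborhood is a copy of $TR_p$ rather than $R_p$, via Lemma~\ref{tube}. The boundary-fixing automorphism of $TR_p$ swapping its two fixed points from Lemma~\ref{hexmap} then extends by the identity to an automorphism of $Y$ carrying $x$ to $y$, so that $Y+_x[TR_p]\cong Y+_y[TR_p]$. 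Hence there are at most two classes.

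To pin the two classes down, I would argue that one type of fixed point extends the tower while the other reorganizes it. For a fixed point $x$ of the extending type, the added twisted ribbon fuses with one of the $(R_p)_i$ to lengthen the tower by one hexagon---this is precisely the inductive construction of $\Hex_{n+1}$ in Example~\ref{Bn}---yielding $\Hex_{n+1}+(k-1)[R_p]$. For a fixed point of the other type, the surgery is confined to the tower, where the discussion following Lemma~\ref{Bn+[TR3]} gives $\Hex_n+_x[TR_p]\cong \Hex_{n-1}+2[R_p]$; since the remaining $k$ ribbons are untouched, this produces $\Hex_{n-1}+(k+2)[R_p]$. Restoring the $\#_p M_g$ factor then gives options (1) and (2). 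A matching of the fixed-point count ($F=3n+2k+1$ on both sides) serves as a consistency check.

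Finally, Proposition~\ref{hex2} together with its subsequent remark shows that $\left(\Hex_{n+1}+(k-1)[R_p]\right)\#_p M_g$ and $\left(\Hex_{n-1}+(k+2)[R_p]\right)\#_p M_g$ are never isomorphic, since their tower lengths differ; so both classes genuinely occur and are distinct. The hard part will be the identification step: showing exactly which fixed points belong to which type and proving that the two resulting gluings really are the claimed $\Hex_{n\pm 1}$ surfaces. As in the proof of Lemma~\ref{twoTR3isoclases} (cf.\ Figure~\ref{M3TAT}), this requires the explicit tower model of $\Hex_n$ and careful tracking of how the new $TR_p$ attaches to the existing hexagons and ribbons; everything else is a routine adaptation of the preceding lemmas.
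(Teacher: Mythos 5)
Your proposal is correct and follows essentially the same route as the paper, whose entire proof of this corollary is the remark that ``the same ideas presented in the proof of Lemma \ref{twoTR3isoclases} can be extended to this more general case.'' You have simply spelled out that extension---stripping off the $\#_p M_g$ factor by commutativity, modeling $\Hex_n+k[R_p]$ with labeled ribbon poles, sorting fixed points into two types via $TR_p$-neighborhoods and the swapping automorphism of Lemma \ref{hexmap}, identifying the two outcomes through Example \ref{Bn} and Lemma \ref{Bn+[TR3]}, and separating them with Proposition \ref{hex2} and its subsequent remark---exactly as the paper intends.
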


The same ideas presented in the proof of Lemma \ref{twoTR3isoclases} can be extended to this more general case. 

Finally, we present a lemma which will help prove the inductive step of our main classification theorem.

\begin{lemma}\label{connectedness}
Let $X$ be a connected $C_p$-surface for which $X-[R_p]$ is defined. If $F(X)\geq 3$, then $X-[R_p]$ must also be connected. 
\end{lemma}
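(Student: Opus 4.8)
The plan is to analyze what the surgery $X - [R_p]$ does to connectivity by understanding the local structure near the removed ribbon. Recall that $X-[R_p]$ is defined by locating a copy of $R_{p,(i)}$ inside $X$ (a neighborhood of some $EB_p$ joining two fixed points $a,b$), deleting it to obtain $\tilde X = X \setminus R_{p,(i)}$, and then gluing in $D^2 \times C_p$ along the $p$ boundary circles of $\tilde X$. Since gluing back the $p$ disks $D^2\times C_p$ can only join components (it never disconnects), the connectivity of $X-[R_p]$ is governed entirely by $\tilde X$: if $\tilde X$ is connected, then so is $X-[R_p]$. So the real content is to show that removing $R_{p,(i)}$ from the connected surface $X$ leaves a connected space whenever $F(X)\geq 3$.

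First I would set up the picture carefully. The ribbon $R_{p,(i)}\subset X$ is an $\operatorname{Aut}(C_p)$-symmetric neighborhood of $EB_p$, which is connected and contains exactly the two fixed points $a,b$ used to build it. Because $F(X)\geq 3$, there is at least one further fixed point $z\in X^{C_p}$ lying outside $R_{p,(i)}$. The key structural fact is that $R_{p,(i)}$ is an open ribbon whose complement $\tilde X$ deformation retracts onto a graph, and more usefully, that $\partial \tilde X$ consists of $p$ circles which are permuted cyclically by $\sigma$. My strategy is to argue by contradiction: suppose $\tilde X$ is disconnected, and derive a contradiction with $F(X)\geq 3$ by counting how the $C_p$-action and the fixed points must distribute among the components.

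The main argument would run as follows. Let the components of $\tilde X$ be $\tilde X_1,\dots,\tilde X_m$ with $m\geq 2$. Since $\sigma$ acts on $\tilde X$ and permutes its components, and since $X = \tilde X \cup R_{p,(i)}$ is connected with $R_{p,(i)}$ attached along $\partial\tilde X$, each component $\tilde X_j$ must meet at least one boundary circle of $\partial\tilde X$ (otherwise $\tilde X_j$ would be a closed component of $X$, contradicting connectivity). The cyclic $\sigma$-action on the $p$ boundary circles, combined with the fact that a fixed point outside the ribbon forces a $\sigma$-invariant component, lets me pin down the orbit structure: either $\sigma$ fixes each component (so every component is a $C_p$-surface in its own right) or $\sigma$ permutes them in free orbits of size $p$. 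In the free-orbit case there can be no fixed point in $\tilde X$, contradicting the existence of $z$; so every component is $\sigma$-invariant, and the two fixed points $a,b$ of $R_{p,(i)}$ lie on its boundary's attaching data while the third fixed point $z$ forces a genuinely separate invariant piece. The contradiction comes from tracking the Euler-characteristic/fixed-point bookkeeping together with the cyclic permutation of the $p$ boundary circles: a single invariant ribbon glued back cannot reconnect more than is allowed while preserving $F(X)\geq 3$.

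The hard part will be making the orbit-counting step rigorous rather than picture-dependent, i.e. rigorously ruling out the disconnected case. The cleanest route is probably to use Proposition \ref{EB in X}: given the fixed point $z$ outside $R_{p,(i)}$ and one of $a,b$, there is a $\sigma$-non-self-intersecting path $\alpha$ from $z$ to a conjugate of (say) $a$ whose conjugates are disjoint, and this path can be chosen to meet $R_{p,(i)}$ only near its endpoint. Such a path witnesses that the component of $\tilde X$ containing $z$ also reaches the boundary of the ribbon, and by following conjugate paths one shows every boundary circle lies in the same component of $\tilde X$ as $z$, forcing $\tilde X$ to be connected. I expect this path-connectivity argument, rather than an Euler-characteristic count, to be the decisive and most delicate step, since it is where the hypothesis $F(X)\geq 3$ is genuinely used and where the failure at $F(X)=2$ (illustrated by the example $S^{2,1}\#_p M_1$) becomes visible.
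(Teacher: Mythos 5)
Your proposal is correct, and its decisive step is exactly the paper's argument: reduce to showing $X\setminus R_p$ is connected, observe that every component of $X\setminus R_p$ must meet one of the $p$ boundary circles (else it would be a closed component of the connected surface $X$), and then connect the extra fixed point $z$ to \emph{all} $p$ boundary circles by taking a path from $z$ to a boundary circle of its own component and applying $\sigma^i$ --- the conjugate paths still start at $z$ precisely because $z$ is fixed. Two small remarks. First, your appeal to Proposition~\ref{EB in X} is both unnecessary and technically misapplied: that proposition is stated for points of $X\setminus X^{C_p}$, whereas $z$ and the ribbon's poles are fixed points; the paper uses only an arbitrary path from $z$ to a boundary circle of its component, which is all you need. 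Second, the contradiction/orbit-counting scheme of your middle paragraph is dispensable, and its dichotomy (components either all $\sigma$-invariant or permuted in free orbits of size $p$) is not valid in general --- an action of $C_p$ on the set of components can have both fixed points and free orbits simultaneously --- but since you abandon that route in favor of the path argument, nothing is lost.
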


\begin{proof}
Fix a copy of $R_p\subset X$ on which we will perform $-[R_p]$ surgery. Since $F(X)\geq 3$, there exists at least one additional fixed point $x\in X$ such that $x\not\in R_p$. In order to show that $X-[R_p]$ is connected, it suffices to show that $X\setminus R_p$ (the space obtained by removing $R_p$ from $X$ but before gluing in the $p$ conjugate disks) is connected. 

We first claim that given any point $y$ in the boundary of $X\setminus R_p$, there is a path from the fixed point $x$ to $y$. First note that the connected component of $X\setminus R_p$ containing $x$ must have at least one boundary component (which we will call $C$). Otherwise, $X$ could not have been connected. Thus there is a path from $x$ to any point on $C$. A conjugate to any such path would be a path from $x$ to $\sigma^i C$. Thus, $x$ must be in the same connected component as each boundary component of $X\setminus R_p$. 

Since $X$ is connected, every point $z\in X\setminus R_p$ must be in the same connected component as at least one boundary component. Thus every point in $X\setminus R_p$ must lie in a single boundary component.
\end{proof}


We are now ready to revisit Theorem \ref{orientableclassification} and provide a proof of the result.

\begin{theorem}
Let $X$ be a connected, closed, orientable surface with an action of $C_p$. Then $X$ can be constructed via one of the following surgery procedures, up to $\operatorname{Aut}(C_p)$ actions on each of the pieces.
\begin{enumerate}
	\item $M_{1+pg}^{\text{free}}:= M_1^{\text{free}}\#_pM_g$, $g\geq 0$
	\item $\Sph_{(p-1)k+pg}[2k+2]:=\left(S^{2,1}+k[R_p]\right)\#_pM_g$, $k,g\geq 0$
	\item $\Hex_{n,(3n-2)(p-1)/2+(p-1)k+pg}[3n+2k]:=\left(\Hex_n+k[R_p]\right) \#_pM_g$, $k,g\geq 0$, $n\geq 1$
\end{enumerate}
\end{theorem}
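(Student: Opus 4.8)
The plan is to induct on the number of fixed points $F=F(X)$. The two reduction tools are the surgeries $-[R_p]$ and $-[TR_p]$, which lower $F$ by $2$ and by $1$ respectively; after reducing, I apply the inductive hypothesis and then undo the surgery by $+[R_p]$ or $+[TR_p]$ to exhibit $X$ as a member of one of the three families. The free case $F=0$ is exactly Family (1), established in Section \ref{freeproof}; together with $F=2$ it forms the base of the induction. Throughout I use that the surgeries change $F$ and $\beta$ by the fixed amounts recorded in Section \ref{surgintro}, so the numerical indices of the families take care of themselves and I need only track which family $X$ belongs to.

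Before the induction I would isolate two facts that rule out degenerate configurations. First, no nontrivial closed $C_p$-surface has exactly one fixed point: assigning to each fixed point its local rotation number gives the map $X^{C_p}\to C_p$ of Proposition \ref{hex2}, whose values are nonzero and (being the images of the boundary loops in the surface-group relation) sum to $0$ in $\Z/p$, and a single nonzero element cannot sum to $0$. Second, since $TR_{p,(i)}$ has a single boundary circle, the complement of an embedded twisted ribbon in a connected surface is automatically connected, so $X-_{x,y}[TR_p]$ is connected whenever it is defined; this is the $TR_p$-analogue of Lemma \ref{connectedness}. For the base case $F=2$, the two fixed points have nonzero rotation numbers summing to $0$, hence are of the form $\{i,-i\}$; by Lemma \ref{tube} their $EB_{p,(i)}$-neighborhood is $R_{p,(i)}$, so $X$ is built from $S^{2,1}$ by replacing $p$ conjugate disks with a free piece. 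Matching the genus via Lemma \ref{euler characteristic} and the rotation data $\{i,-i\}$, Ding's theorem gives $X\cong S^{2,1}\#_p M_g$, i.e. Family (2) with $k=0$.

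For the inductive step assume $F\geq 3$ and the theorem for all smaller fixed-point counts. Choose distinct fixed points $x,y$; by Lemma \ref{tube} a neighborhood of the associated $EB_{p,(i)}$ is either $R_{p,(i)}$ or $TR_{p,(i)}$. In the $R_{p,(i)}$ case set $X'=X-[R_p]$, which is connected by Lemma \ref{connectedness} and has $F-2$ fixed points. If $F=3$ this would force $F(X')=1$, impossible by the first fact above, so this case occurs only for $F\geq 4$; then $X'$ is non-free and by the inductive hypothesis lies in Family (2) or (3). Since $+[R_p]$ is inverse to $-[R_p]$, is independent of the disks chosen (Corollary \ref{independence of disk choice}), and commutes with $\#_p$, we have $X\cong X'+[R_p]$, and adding $[R_p]$ carries $\Sph_{(p-1)k+pg}[2k+2]$ to $\Sph_{(p-1)(k+1)+pg}[2(k+1)+2]$ and $(\Hex_n+k[R_p])\#_p M_g$ to $(\Hex_n+(k+1)[R_p])\#_p M_g$. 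Thus $X$ remains in the same family.

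The remaining case, where the neighborhood is $TR_{p,(i)}$, is the crux. Set $X'=X-_{x,y}[TR_p]$, connected by the second fact above, with $F-1\geq 2$ fixed points, hence non-free and, by the inductive hypothesis, in Family (2) or (3). Undoing the surgery gives $X\cong X'+_z[TR_p]$ for some fixed point $z$. Unlike $+[R_p]$, twisted ribbon surgery genuinely depends on $z$, so I cannot name the outcome outright; instead I must check that every possible outcome already appears on the list. This is exactly the content of the twisted-ribbon lemmas: Lemma \ref{M1[3]+[TAT]}, Lemma \ref{Bn+[TR3]}, and Lemma \ref{M2k+[TAT]} (with the defining identity $S^{2,1}+_S[TR_p]\cong\Hex_1$ and the commutation of the surgeries) show that $\Sph_{(p-1)k+pg}[2k+2]+_z[TR_p]\cong(\Hex_1+k[R_p])\#_p M_g$ and that twisted ribbon surgery on $\Hex_n\#_p M_g$ again yields a Family (3) surface, while Lemma \ref{twoTR3isoclases} and Corollary \ref{twoTR3isoclasses2} show that for $k\geq 1$ the two possible values of $(\Hex_n+k[R_p])\#_p M_g+_z[TR_p]$ are $(\Hex_{n+1}+(k-1)[R_p])\#_p M_g$ and $(\Hex_{n-1}+(k+2)[R_p])\#_p M_g$ (with $n=1$ also allowing a Family (2) outcome). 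In every case $X'+_z[TR_p]$ lies in Family (2) or (3), completing the induction. The main obstacle is precisely this twisted case: because $+[TR_p]$ is not fixed-point independent, the family $\Hex_n$ must be rich enough to absorb both outcomes of every twisted surgery, and verifying that the listed lemmas cover all configurations---together with the $F\neq 1$ and connectivity facts that keep each reduction well-defined---is the heart of the argument.
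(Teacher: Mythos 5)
Your proposal is correct, and its core is the same induction the paper runs: reduce at a pair of fixed points via Lemma \ref{tube}, split into the $R_p$ and $TR_p$ cases, preserve connectedness via Lemma \ref{connectedness}, and in the twisted case enumerate the possible outcomes using Lemma \ref{M1[3]+[TAT]}, Lemma \ref{M2k+[TAT]}, Lemma \ref{Bn+[TR3]}, Lemma \ref{twoTR3isoclases}, and Corollary \ref{twoTR3isoclasses2}. The genuine differences are in the preliminaries and base cases. First, for the nonexistence of orientable $C_p$-surfaces with $F=1$, the paper simply cites \cite{Bro91} and \cite{AB67}, whereas you give the rotation-number argument directly; but you must state it only for \emph{orientable} surfaces — as written (``no nontrivial closed $C_p$-surface has exactly one fixed point'') it is false, the paper's own $N_1[1]$ being a counterexample, since the map $X^{C_p}\to C_p$ and the sum-to-zero relation both require an orientation. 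Every surface appearing in your induction ($X$, $X-[R_p]$, $X-_{x,y}[TR_p]$) is orientable, so this is an imprecision rather than a gap. Second, for $F=2$ the paper performs $-[R_p]$, observes that the result is either $M_{1+pg}^{\text{free}}$ or the \emph{disconnected} space $M_g\times C_p$, and re-glues; you instead invoke Ding's rigidity theorem (\cite{Ding}, as quoted in Proposition \ref{hex2}) to identify $X$ with $S^{2,1}\#_pM_g$ from its genus and rotation data. This works, with two caveats: your exclusion of the $TR_p$-neighborhood there tacitly uses that the two fixed points of $TR_{p,(i)}$ carry equal rotation numbers, a computation the paper never isolates (it is cleaner to exclude it, as the paper does and as you do at larger $F$, via the $F=1$ impossibility); and it imports a strong external rigidity result where the paper deliberately stays inside its surgery toolkit, using Ding only to separate $\Hex_2$ from $\Sph_{2(p-1)}[6]$. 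Finally, you fold $F=3$ into the inductive step, make explicit the easy connectedness of the complement of a twisted ribbon, and drop the paper's inductive clause asserting fixed-point independence of $+[TR_p]$, relying instead on the enumeration lemmas — which is all the paper's own argument actually uses, and which avoids the tension between that clause and Lemma \ref{twoTR3isoclases}.
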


\begin{proof}
We induct on the number of fixed points $F$. 

First let $X$ be a free orientable space. By the classification of free $C_p$-spaces done in Section \ref{freeproof}, $X\cong M_{1+pg}^{\text{free}}$ for some $g\geq 0$.  

The case where $X$ is orientable and $F=1$ does not occur. A proof of this fact can be found in Example 3.3 of \cite{Bro91} or Theorem 7.1 of \cite{AB67}. Let us move on to the case $F=2$. Let $x,y\in X$ be distinct fixed points. By Lemma \ref{tube}, there exists $R_p\subset X$ or $TR_p\subset X$ containing $x$ and $y$. The latter case is not possible since $X-[TR_p]$ would be a closed, orientable $C_p$-surface with a single fixed point. So $x$ and $y$ are contained in some $R_p$ in $X$. Then by the $F=0$ case, $X-[R_p]\cong M_{1+pg}^{\text{free}}$ or $X-[R_p]\cong M_g\times C_p$. We know from Figure \ref{-Rpchoices} that $+[R_p]$ surgery on either of these spaces results in $S^{2,1}\#_pM_{g^\prime}$ for some $g^\prime\geq 0$. Thus $X$ must be isomorphic to $\Sph_{pg}[2]$. 

We additionally observe in the case $F=2$ that since $X\cong S^{2,1}\#_pM_{g^\prime}$, there is an equivariant automorphism of $X$ swapping the fixed points $x$ and $y$. This map $\varphi$ can be defined as a reflection through the plane perpendicular to the axis of rotation which bisects $X$. We can thus define an isomorphism $X+_y[TR_p]\rightarrow X+_x[TR_p]$ given by $\varphi$ everywhere outside of the added copy of $TR_p$.  

Next assume $F=3$. Again, we can find distinct fixed points $x$ and $y$ in $X$ which are contained in $R_p\subset X$ or $TR_p\subset X$. The former is impossible since $X-[R_p]$ would be a closed, orientable $C_p$-surface with one fixed point. Thus, $x,y\in TR_p$ in $X$. So $X-_{x,y}[TR_p]\cong S^{2,1}\#_pM_g$ for some $g$ by the previous $F=2$ case. Finally we can observe that $\left(S^{2,1}\#_pM_g\right)+[TR_p]\cong \Hex_1\#_pM_g$. Since $S^{2,1}\#_pM_g+_?[TR_p]$ is independent of the chosen fixed point, we can conclude that $X\cong \Hex_1\#_pM_g$.  

Since $X\cong \Hex_1\#_pM_g$, all three fixed points of $X$ live in a neighborhood isomorphic to $\Hex_1\setminus\left(D_2\times C_p\right)$. So given any two fixed points in $X$, there exists $TR_p\subset X$ containing them. By Lemma \ref{hexmap} we can construct an equivariant automorphism of $X$ swapping any two of its fixed points. Therefore $+[TR_p]$ surgery on $X$ is invariant of the choice of fixed point. 

For the inductive hypothesis, let $3 < \ell$. For any $\ell^\prime$ with $3\leq \ell^\prime <\ell$, suppose that (1) if $Z$ is a connected, closed, orientable $C_p$-surface with $F=\ell^\prime$, then $Z$ is isomorphic to $\Sph_{(p-1)k+pg}[2k+2]$ or $\Hex_{n,(3n-2)(p-1)/2+(p-1)k}[3n+2k]$ for some $k,g\geq 0$ and $n\geq 1$, and (2) if $x$ and $y$ in $Z$ are distinct fixed points, then $Z+_x[TR_p]\cong Z+_y[TR_p]$. Now let $X$ be a closed, orientable $C_p$-surface with $F=\ell$. Let $x,y\in X$ be distinct fixed points. By Lemma \ref{tube}, there exists $R_p\subset X$ or $TR_p\subset X$ containing $x$ and $y$. 

Suppose first that $x$ and $y$ are contained in $R_p\subset X$. Then $X-[R_p]$ has $\ell-2\geq 2$ fixed points. Since $X$ was connected and $X-[R_p]$ has at least one fixed point, $X-[R_p]$ must also be connected by Lemma \ref{connectedness}. So we can invoke the inductive hypothesis to conclude that $X-[R_p]$ is isomorphic to one of the following:
\begin{enumerate}
\item $\Sph_{(p-1)k+pg}[2k+2]\cong\left(S^{2,1}+k[R_p]\right)\#_pM_g$
\item $\left(\Hex_n+k[R_p]\right)\#_p M_g$.
\end{enumerate}
In the first case, we can conclude 
\[X\cong \left(S^{2,1}+(k+1)[R_p]\right)\#_pM_g\cong \Sph_{(p-1)(k+1)+pg}[2(k+1)+2].\]
In the second case, 
it follows that
\[X\cong \left(\Hex_n+(k+1)[R_p]\right)\#_p M_g.\] 

If $x$ and $y$ are contained in $TR_p\subset X$, then $X-_{x,y}[TR_p]$ has $\ell-1\geq 3$ fixed points. By the inductive hypothesis, $X-_{x,y}[TR_p]$ is isomorphic to one of the following:
\begin{enumerate}
\item $\Sph_{(p-1)k+pg}[2k+2]\cong\left(S^{2,1}+k[R_p]\right)\#_pM_g$ for some $k\geq 1$ and $g\geq 0$
\item $\left(\Hex_n+k[R_p]\right)\#_p M_g$ for some $n\geq 1$ and $k,g\geq 0$.
\end{enumerate}
We know from Lemma \ref{M2k+[TAT]} that $+_?[TR_p]$-surgery on $\Sph_{(p-1)k+pg}[2k+2]$ is independent of the chosen fixed point. So if $X-_{x,y}[TR_p]\cong \Sph_{(p-1)k+pg}[2k+2]$, then 
\[X\cong \left(\Hex_1+k[R_p]\right)\#_pM_g.\]

Next suppose $X-_{x,y}[TR_p]\cong \left(\Hex_n+k[R_p]\right)\#_p M_g$ for some $n\geq 1$ and $k,g\geq 0$. Again, we know from Corollary \ref{twoTR3isoclasses2} that if $k\geq 1$ there are two isomorphism classes of spaces for $\left(\left(\Hex_n+k[R_p]\right)\#_p M_g\right)+_a[TR_p]$, depending on the choice of fixed point $a$. In one case we have
\[X\cong  \left(\Hex_{n-1}+(k+2)[R_p]\right)\#_pM_g.\]
This is also the result of $+_a[TR_p]$-surgery on $X$ when $k=0$.
Assuming $k\geq 1$, it is also possible that
\[X\cong \left(\Hex_{n+1}+(k-1)[R_p]\right)\#_pM_g.\]  
\end{proof}


For the remainder of this section, we use $\tilde{N}_n$ to denote the space $N_n\setminus D^2$. 

\subsection{Free Actions on Non-orientable Surfaces with Boundary}

Our next goal is to prove the classification theorem for non-orientable $C_p$-surfaces. We saw that there were no orientable $C_p$-surfaces with a single fixed point, but this is not the case for non-orientable surfaces. In order to prove the $F=1$ case of our classification theorem, we need to lay a bit of ground work. We start with a treatment of free $C_p$-actions on $\tilde{N}_{pn+1}$ for $n\geq 0$.  


\begin{proposition}\label{free with boundary}
Up to the action of $\operatorname{Aut}(C_p)$ there is a single isomorphism class of free $C_p$ actions on $\tilde{N}_{pn+1}$ for all $n\geq 0$. 
\end{proposition}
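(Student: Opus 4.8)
The plan is to reproduce the strategy used for closed free actions (Propositions \ref{freestuff} and \ref{freenonor}), adapting it to a surface with one boundary circle. First I would identify the orbit space. A free $C_p$-action makes $\tilde N_{pn+1}\to \tilde N_{pn+1}/C_p$ a $p$-fold cover, so $\chi(\tilde N_{pn+1}/C_p)=\chi(\tilde N_{pn+1})/p=-pn/p=-n$. Since a covering space of an orientable surface is orientable, the quotient of the non-orientable $\tilde N_{pn+1}$ is again non-orientable. The single boundary circle carries a free (hence rotation) $C_p$-action, so its image is one circle and the quotient has a single boundary component. A non-orientable surface with one boundary component and Euler characteristic $-n$ has genus $n+1$, so $\tilde N_{pn+1}/C_p\cong \tilde N_{n+1}$.

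Next, exactly as in Proposition \ref{freestuff}, I would set up a correspondence. Choosing an identification $\tilde N_{pn+1}/C_p\cong \tilde N_{n+1}$ exhibits $\tilde N_{pn+1}$ as a connected principal $C_p$-bundle over $\tilde N_{n+1}$, classified by a class $\phi\in H^1(\tilde N_{n+1};\Z/p)=\operatorname{Hom}(\pi_1\tilde N_{n+1},\Z/p)$. Connectedness of the total space forces $\phi\neq 0$, and the requirement that the total boundary be a single circle (rather than $p$ of them) is exactly the condition that $\phi$ restrict nontrivially to $\partial \tilde N_{n+1}$. Making the bundle well defined requires quotienting by $\operatorname{Aut}(\tilde N_{n+1})$, and the ``up to $\operatorname{Aut}(C_p)$'' clause lets us additionally quotient by the scaling action of $(\Z/p)^\times$ on $\phi$. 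So it suffices to show that the group generated by the mapping class group and $(\Z/p)^\times$ acts transitively on $\{\phi : \phi|_{\partial}\neq 0\}$.

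I would then make this linear-algebraic. Representing $\tilde N_{n+1}$ as a sphere with $n+1$ crosscaps $\alpha_1,\dots,\alpha_{n+1}$ (as in Figure \ref{Nr}), $H_1(\tilde N_{n+1};\Z/p)\cong(\Z/p)^{n+1}$ is free on the $\alpha_i$ (no relation, since the surface has boundary), and the boundary class is $2(\alpha_1+\cdots+\alpha_{n+1})$. Writing $c_i=\phi(\alpha_i)$, the boundary condition becomes $\sum_i c_i\neq 0$. The Dehn twists $T_{i,j}$ act on the tuple $(c_1,\dots,c_{n+1})$ by $(c_i,c_j)\mapsto(2c_i+c_j,-c_i)$, and a direct check shows this (and the crosscap slide $Y_{i,j}$) preserves $\sum_i c_i$. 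When $c_i+c_j\neq 0$, iterating $T_{i,j}$ runs $(c_i,c_j)$ through all pairs with that fixed coordinate-sum, and in particular reaches $(c_i+c_j,0)$. Transitivity then follows from a clearing procedure: whenever two or more coordinates are nonzero, one can find (using $\sum_i c_i\neq 0$) a pair of nonzero coordinates with nonzero sum and merge them, strictly dropping the number of nonzero coordinates; the process terminates at a single nonzero coordinate equal to $s=\sum_i c_i$, which a final merge with the first slot relocates, giving $(s,0,\dots,0)$. Scaling by $s^{-1}\in(\Z/p)^\times$ yields $(1,0,\dots,0)$, so there is exactly one orbit. The case $n=0$ (the M\"obius band) is direct: $H_1=\Z/p$, the condition is $c_1\neq 0$, and $(\Z/p)^\times$ acts transitively.

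Finally, the transitivity computation is routine and parallels Proposition \ref{freenonor}; the care is needed in the bookkeeping. I expect the main obstacle to be establishing the boundary-sensitive correspondence rigorously: verifying that, over a surface with boundary, isomorphism classes of free $C_p$-actions with connected boundary biject with $\operatorname{Aut}\times\operatorname{Aut}(C_p)$-orbits of classes $\phi$ with $\phi|_{\partial}\neq 0$, and in particular confirming that each such $\phi$ really is realized by $\tilde N_{pn+1}$ and not some other surface. The key point there is that a connected $p$-fold cover of the non-orientable $\tilde N_{n+1}$ is again non-orientable: an index-$p$ subgroup of $\pi_1$ with $p$ odd cannot be contained in the index-$2$ orientation subgroup, so the cover has odd-order-incompatible $w_1$ and must be non-orientable; together with the Euler-characteristic count this pins the total space down as $\tilde N_{pn+1}$. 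A secondary technical point is to justify that the homeomorphisms $T_{i,j}$ exist on $\tilde N_{n+1}$ supported away from the boundary, so that they act on $H^1$ as claimed; this is harmless since the clearing procedure uses only Dehn twists, whose supporting curves may be taken in the interior.
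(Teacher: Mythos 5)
Your argument is correct, but it reaches the conclusion by a genuinely different mechanism than the paper's proof of Proposition \ref{free with boundary}. The paper never isolates the boundary-restriction condition: it bounds the number of \emph{all} nonzero orbits in $H_1(\tilde{N}_{n+1};\Z/p)/\operatorname{Aut}(\tilde{N}_{n+1})$ --- at most $(p+1)/2$ for $n>1$ (representatives $(1,0,\dots,0)$ and $(\ell,\dots,\ell)$), at most $p-1$ for $n=1$, and $(p-1)/2$ for $n=0$ --- and then matches these orbits against an explicit census of free $C_p$-spaces with quotient $\tilde{N}_{n+1}$: the $(p-1)/2$ pairwise non-isomorphic $\operatorname{Aut}(C_p)$-conjugates of $MB_p\#_pN_n$ on $\tilde{N}_{pn+1}$, together with the auxiliary space $Y$ obtained by removing $p$ conjugate disks from $N_2^{\text{free}}\#_pN_{n-1}$, which has $p$ boundary circles (so $Y\not\cong\tilde{N}_{pn+1}$) and carries one free action for $n>1$ but $(p-1)/2$ of them for $n=1$ by Propositions \ref{freenonor} and \ref{kbottle}; this census is what forces the paper's three-case split and its reliance on the closed classification. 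You instead determine \emph{a priori} which classes have total space $\tilde{N}_{pn+1}$, namely those $\phi$ with $\phi([\partial])=2\sum_i c_i\neq 0$ (equivalent to connected boundary preimage since $p$ is prime and $2$ is invertible), with your odd-index-versus-index-two observation plus $\chi=-pn$ correctly pinning down the homeomorphism type of the cover, and you prove transitivity on this subset directly, using that the dual actions of $T_{i,j}$ and $Y_{i,j}$ preserve $\sum_i c_i$ while $\operatorname{Aut}(C_p)$ scales it. Your merging step checks out: iterating $(c_i,c_j)\mapsto(2c_i+c_j,-c_i)$ adds the invariant pair-sum to the first slot each time, so it reaches $(c_i+c_j,0)$ whenever $c_i+c_j\neq0$, and when at least two coordinates are nonzero such a pair must exist (three pairwise-cancelling nonzero coordinates would force $2a\equiv 0$, and exactly two would force $\sum_i c_i=0$). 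What each approach buys: yours is uniform in $n$, avoids constructing and distinguishing $Y$, and exhibits the separating invariant ($\sum_i c_i$ up to sign, geometrically the rotation number on the boundary circle) that explains \emph{why} exactly $(p-1)/2$ of the paper's orbits correspond to $\tilde{N}_{pn+1}$; the paper's counting argument yields more as a by-product, namely a complete description of $\mathcal{S}_p(\tilde{N}_{n+1})$ including the covers with disconnected boundary, at the cost of the case analysis and of verifying the listed spaces are pairwise non-isomorphic.
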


More precisely, there are $(p-1)/2$ non-isomorphic actions on $\tilde{N}_{pn+1}$. These are the $\operatorname{Aut}(C_p)$-conjugates of $MB_p\#_p N_n$ where $MB_p$ is defined in Section \ref{surgintro}. 

Given a free $C_p$ action on $\tilde{N}_{pn+1}$, the quotient $\tilde{N}_{pn+1}/C_p$ must be a non-orientable surface with a single boundary component and Euler characteristic $\frac{1}{p}(1-(pn+1))=-n$. The only such space is $\tilde{N}_{n+1}$. 

Recall from Section \ref{surgintro} that $\mathcal{S}(\tilde{N}_{n+1})$ denotes the set of isomorphism classes of path-connected, free $C_p$ spaces $X$ so that $X/C_p\cong \tilde{N}_{n+1}$. There is a bijection between $\mathcal{S}(\tilde{N}_{n+1})$ and the set of nonzero orbits of $H^1(\tilde{N}_{n+1};\Z/p)$ under the action of $\operatorname{Aut}(\tilde{N}_{n+1})$. 

To prove Proposition \ref{free with boundary}, we will consider three cases: $n=0$, $n=1$, and $n>1$. When $n>1$, we will show that there are at most $(p+1)/2$ nonzero orbits in $H_{1}(\tilde{N}_{n+1};\Z/p)/\operatorname{Aut}(\tilde{N}_{n+1})$. Then we construct a free $C_p$ space $Y\not\cong \tilde{N}_{pn+1}$ of genus $pn+1$ whose quotient by $C_p$ is $\tilde{N}_{n+1}$. This will guarantee that the $\frac{p-1}{2}$ conjugate actions of $C_p$ on $\tilde{N}_{pn+1}$ coming from $MB_p\#_pN_n$ can be the only such actions. We carry out a similar procedure in the $n=1$ case, instead showing that there are $p-1$ nonzero orbits in $H_{1}(\tilde{N}_{2};\Z/p)/\operatorname{Aut}(\tilde{N}_{2})$ and constructing a non-equivariant space distinct from $\tilde{N}_{n+1}$ with $(p-1)/2$ non-isomorphic free $C_p$-actions. The $n=0$ case will prove to be even simpler, with only $(p-1)/2$ nonzero orbits in $H_1(\tilde{N}_1)/\operatorname{Aut}(\tilde{N}_1)$.

\begin{figure}
\begin{center}
\includegraphics[scale=.35]{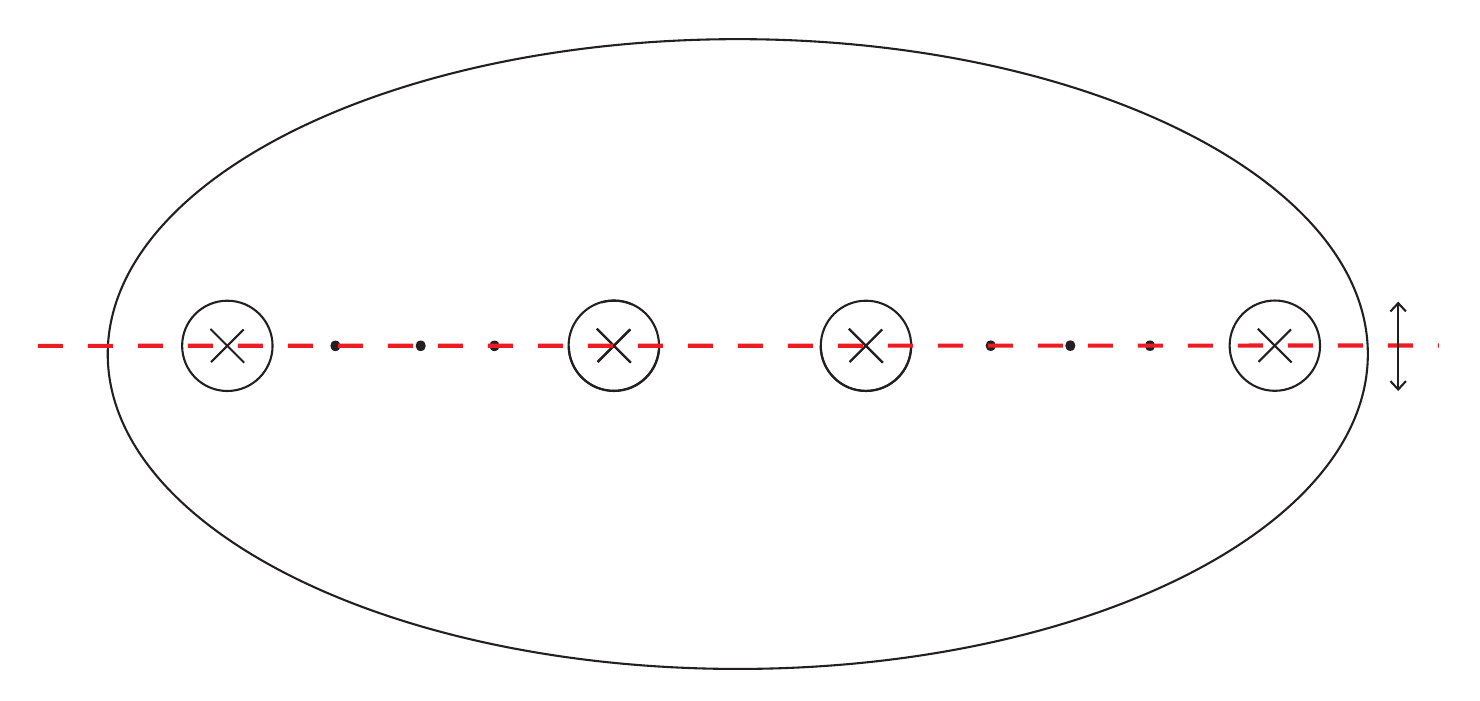}
\end{center}
\caption{\label{reflection} Reflection about the orange line sends every $\alpha_i$ to $-\alpha_i$.}
\end{figure}

\begin{proof}
Our proof will be very reminiscent of that of Theorem \ref{freenonor}. Represent $\tilde{N}_{n+1}$ as a disk with $n+1$ crosscaps, and pick a basis $\{\alpha_1,\dots , \alpha_{n+1}\}$ for $H_1(\tilde{N}_{n+1};\Z/p)=\left(\Z/p\right)^{n+1}$ given by the center circles of the crosscaps. 
Recall our notation $T_{i,j}$ for the Dehn twist about the curve passing through the $i$th and $j$th crosscaps and $Y_{i,j}$ for the crosscap slide which passes the $i$th crosscap through the $j$th. In addition to these mapping class group elements, let $\psi$ denote the reflection as shown in Figure \ref{reflection}. 


When $n=0$, we do not have Dehn twists or crosscap slide homeomorphisms. It is quick to check that $\psi$ sends $k\alpha_1$ to $(p-k)\alpha_1$. This gives us at most $(p-1)/2$ nonzero orbits in $H_1(\tilde{N}_1;\Z/p)/\operatorname{Aut}(\tilde{N}_1)$. In fact we can conclude that there are exactly $(p-1)/2$ nonzero orbits because there should also be \emph{at least} $(p-1)/2$ nonzero orbits corresponding to the $(p-1)/2$ non-isomorphic free actions on the M\"{o}bius band defined in Section \ref{surgintro}.

Skip the $n=1$ case for now, let us assume $n\geq 2$. Let $\mathbf{c}=(c_1,\dots ,c_{n+1})$ be a nonzero element of $H_1(\tilde{N}_{n+1};\Z/p)$. We will first show that there are at most $p-1$ nontrivial orbits with representatives of the form
$(k,0,\dots,0)$ for some $1\leq k\leq \frac{p-1}{2}$ or $(\ell,\ell,\dots,\ell)$ for some $1\leq \ell\leq \frac{p-1}{2}$. Let $c_i$ be the rightmost nonzero entry of $\mathbf{c}$ with the property that $c_i\neq c_{i-1}$. We first claim there exists some power of $T_{i-1,i}$ so that
\[(c_1,\dots ,c_{i-1},c_i,\dots,c_{n+1})\sim (c_1,\dots ,c_{i-1}-c_i,0,c_{i+1},\dots ,c_{n+1}).\]
We showed in the proof of Proposition \ref{freenonor} that applying $T_{i-1,i}$ to the tuple $s$ times produces the tuple whose $(i-1)$st coordinate is $(s+1)c_{i-1}-sc_i$ and whose $i$th coordinate is $,sc_i-(s-1)c_{i-1}$. Since $c_{i-1}\neq c_i$, there exists some positive integer $s$ so that $sc_{i-1}-(s-1)c_i\equiv 0\pmod{p}$. For such an $s$, it is therefore also true that $(s+1)c_{i-1}-sc_i\equiv c_{i-1}-c_i\pmod{p}$. So applying $T_{i-1,i}^s$ to the tuple $(c_1,\dots ,c_{n+1})$ produces
\[(c_1,\dots ,c_{i-1}-c_i,0,c_{i+1},\dots ,c_{n+1})\]
as desired. Notice that applying the appropriate power of $T_{i-1,i}$ either increases the number of zeros in the tuple (in the case that $c_{i-1}\neq 0$) or shifts an existing zero to the right one position (in the case $c_{i-1}=0$). Repeat this process to obtain the orbit representative $(c_1,\dots ,c_{n+1})\sim (k,k,\dots,k,0,\dots,0)$ with $k\neq 0$ and $1\leq \ell\leq n+1$ nonzero entries.

When $\ell<n+1$, 
\[Y_{\ell,n+1}(k,k,\dots,k,0,\dots ,0)=(k,\dots,k,p-k,0,\dots,0)\]
Since the $\ell$th entry is not equal to the $(\ell-1)$st entry, we can repeat the steps outlined in the previous paragraph until we obtain
\[(c_1,\dots,c_{n+1})\sim (k^\prime,\dots,k^\prime,0,\dots,0)\]
with $k^\prime\neq 0$ and $\ell^\prime<\ell$ nonzero entries. Since $k$ is nonzero and $k\neq p-k$, we know the number of zeros will strictly increase with this process. Therefore we can repeat it until
\[(c_1,\dots,c_n)\sim (k,0,\dots,0)\]
for some nonzero $k$. 

In the case that $\ell=n+1$ we have $c_i=c_1$ for all $i$. So 
\[(c_1,\dots ,c_{n+1})= (c_1,c_1,\dots,c_1).\] 
Note that the action of $\psi$ puts $(k,0,\dots,0)$ in the same orbit as $(p-k,0,\dots ,0)$ and similarly puts $(k,k,\dots,k)$ in the same orbit as $(p-k,p-k,\dots,p-k)$
, giving us at most $p-1$ nonzero orbits. 

To finish the $n\geq 2$ case, we will now check that all elements of the form $(k,0,\dots ,0)$ are in the same orbit under the action of Dehn twists and crosscap slides. Let $(1,a,0\dots,0)$ be an element of $H_1(\tilde{N}_{n+1};\Z/p)$ with $a\neq 0$. Note that since $n\geq 2$, this element at least one zero entry. Based on our previous arguments, we know this is in the same orbit as $(1-a,0,\dots,0)$ and $(a-1,0,\dots,0)$. Alternatively, we can see that under the action of $Y_{2,3}$ followed by several Dehn twists, $(1,a,0,\dots,0)$ is in the same orbit as $(1,-a,0,\dots,0)$ and $(1+a,0,\dots,0)$. Putting all of this together, we are able to conclude that $(a+1,0,\dots,0)$ and $(a-1,0,\dots,0)$ are in the same orbit for all $a$. this is enough to conclude that all elements of the form $(k,0,\dots,0)$ are in the same orbit when $k\neq 0$. As desired, this leaves us with $(p+1)/2$ nontrivial orbits with representatives $(1,0,\dots,0)$ and $(\ell,\ell,\dots,\ell)$ for all $1\leq \ell\leq (p-1)/2$.

When $n=1$, $H_1(\tilde{N}_2;\Z/p)=\Z/p\oplus\Z/p$. As with the $n>1$ case, analyzing the action of Dehn twists and crosscap slides on the homology generators gives us at most $p-1$ nontrivial orbits with representatives of the form $(k,0)$ for $1\leq k \leq p-1$ and $(\ell,\ell)$ for $1\leq \ell\leq p-1$. We will see below that these must represent $p-1$ distinct orbits in $H_1(\tilde{N}_2)/\operatorname{Aut}(\tilde{N}_2)$.

Let $Y$ be the space obtained by removing $p$ conjugate disks from $N_2^{\text{free}}\#_pN_{n-1}$. As proved in Section \ref{freenonor}, this space has $(p-1)/2$ non-isomorphic free $C_p$-actions when $n=1$ and just one action when $n>1$. The quotient of $Y$ by any of its free actions is $\tilde{N}_{n+1}$ as desired. Moreover, $Y\not\cong \tilde{N}_{pn+1}$ since these spaces do not have the same number of boundary components. 
\end{proof}


As in the last section, the trivial orbit of $H_1(\tilde{N}_{n+1};\Z/p)$ corresponds to the non-path-connected $C_p$-space $C_p\times \tilde{N}_{n+1}$.

\subsection{Proof of Classification for Non-orientable Surfaces} 


\begin{lemma}\label{N1[1]+[AT]}
There is an equivariant isomorphism
\[\Hex_1\#_p N_1\cong N_1[1]+[R_p]\]
\end{lemma}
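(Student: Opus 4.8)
The plan is to prove the isomorphism by \emph{undoing} a ribbon surgery on $\Hex_1\#_p N_1$ to recover $N_1[1]$, and then redoing it. First I would characterize $N_1[1]$ intrinsically: it is a connected, non-orientable $C_p$-surface with exactly one fixed point and $\beta$-genus $1$ (equivalently $S^{2,1}+_N[FMB_p]$, in the spirit of Example \ref{freekleinbottle}). Since $\beta=1$ forces the underlying surface to be $\mathbb{RP}^2=N_1$, deleting its fixed point leaves a free $C_p$-action on the open M\"obius band $\tilde N_1$, and by the $n=0$ case of Proposition \ref{free with boundary} such an action is unique up to $\operatorname{Aut}(C_p)$. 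Thus $N_1[1]$ is the unique connected non-orientable $C_p$-surface with $\beta=1$ and a single fixed point, up to $\operatorname{Aut}(C_p)$. It therefore suffices to exhibit inside $\Hex_1\#_p N_1$ a copy of $R_p$ whose removal produces a surface with these invariants.

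Granting such an $R_p$, the bookkeeping is immediate. Write $X=\Hex_1\#_p N_1$. Since $\Hex_1$ is non-equivariantly $M_{(p-1)/2}$ we have $\beta(\Hex_1)=p-1$, so the connected-sum formula $\beta(A\#_pB)=\beta(A)+p\,\beta(B)$ gives $F(X)=3$ and $\beta(X)=(p-1)+p=2p-1$, and $X$ is non-orientable. A $-[R_p]$ surgery lowers the fixed-point count by $2$ and the $\beta$-genus by $2(p-1)$. Because $F(X)=3\geq 3$, Lemma \ref{connectedness} shows $X-[R_p]$ is connected, and the surgery formula then gives $F(X-[R_p])=1$ and $\beta(X-[R_p])=(2p-1)-(2p-2)=1$; as $\beta=1$ is odd, $X-[R_p]$ is a connected non-orientable surface with a single fixed point, so $X-[R_p]\cong N_1[1]$ by the uniqueness above. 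Finally $+[R_p]$ is independent of the chosen disks (Corollary \ref{independence of disk choice}) and is inverse to the chosen $-[R_p]$, whence $X\cong\bigl(X-[R_p]\bigr)+[R_p]\cong N_1[1]+[R_p]$.

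The main obstacle is precisely the existence of an \emph{untwisted} ribbon $R_p$ around two of the three fixed points. Lemma \ref{tube} joins any two fixed points by some $EB_p$ whose neighborhood is $R_p$ or $TR_p$, but a priori one could always land in the twisted case; indeed, in $\Hex_1$ itself every pair of fixed points lies only in a $TR_p$, since $\Hex_1-[R_p]$ would have $\beta=(p-1)-(2p-2)<0$. What makes the untwisted case available in $X$ is the appearance of the $p$ crosscaps contributed by $\#_p N_1$ (the $p$ conjugate M\"obius bands), which are permuted freely and transitively by $C_p$. I would use the identification $\Hex_1\#_p N_1\cong\bigl(S^{2,1}\#_p N_1\bigr)+_S[TR_p]$, valid because equivariant connected sum commutes with twisted ribbon surgery, and then modify the connecting $EB_p$ equivariantly by routing each of its $p$ conjugate arcs through the corresponding conjugate crosscap, toggling the framing of the ribbon neighborhood from the twisted $TR_p$ to the planar $R_p$.

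The hard part will be making this equivariant rerouting precise: one must verify that the modified neighborhood is genuinely the orientable planar $R_p$, and not another $TR_p$ or a non-orientable ribbon. I expect this is cleanest to carry out on an explicit gluing diagram of $\bigl(S^{2,1}\#_p N_1\bigr)+_S[TR_p]$, tracking the single edge whose attachment (by the proof of Lemma \ref{tube}) determines whether the result is $R_p$ or $TR_p$, and checking that one pass through a crosscap flips exactly that choice while preserving the two cone points as the two poles of the resulting $R_p$.
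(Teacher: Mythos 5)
Your proposal is correct in outline, and it splits the work the same way the paper does — the entire content is the existence of an \emph{untwisted} $R_p$ through two of the three fixed points — but the two halves are handled differently. For the identification of $X-[R_p]$, the paper performs the $-[R_3]$ surgery explicitly on a gluing diagram (Figure \ref{surg2}) and visually recognizes $N_1[1]$, working at $p=3$ and asserting $p>3$ is similar; you instead identify $X-[R_p]$ purely by invariants ($F=1$, $\beta=(2p-1)-2(p-1)=1$, connected via Lemma \ref{connectedness}, non-orientable since $\beta$ is odd) together with the uniqueness of a one-fixed-point genus-$1$ surface, which follows from the $n=0$ case of Proposition \ref{free with boundary} exactly as in the paper's own $F=1$ step of the non-orientable classification — and there is no circularity, since neither that proposition nor the $F=1$ argument depends on this lemma. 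Your route is arguably more robust: it works uniformly in $p$ and replaces a picture-check by bookkeeping. For the existence of the $R_p$, however, you are doing essentially what the paper does, just with the figure deferred: the paper exhibits an explicit $EB$ through the crosscaps and verifies it is untwisted by counting boundary components of its neighborhood ($p$ components forces $R_p$, one forces $TR_p$), which is precisely the ``single edge'' check you describe. Two small simplifications to your final paragraph: Lemma \ref{tube} already guarantees the neighborhood is $R_p$ or $TR_p$, so your worry about a non-orientable ribbon is vacuous, and the verification therefore reduces entirely to the boundary-component count; and your closing step $X\cong\bigl(X-[R_p]\bigr)+[R_p]$ is exactly the content of Lemma \ref{-[Rp] invariance}, which you may as well cite (as the paper does) rather than re-argue via Corollary \ref{independence of disk choice}. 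The one genuine piece of unfinished work in your write-up is drawing the rerouted $EB_p$ and doing that count — but since your crosscap-toggling construction is the same curve the paper draws in Figure \ref{surg2}, this is a completion of detail, not a gap in strategy.
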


\begin{proof}
We will prove this result for the case $p=3$, noting that $p>3$ is similar. Figure \ref{surg2} shows us how $\left(\Hex_1\#_3N_1\right)-[R_3]\cong N_1[1]$. To begin, we represent $\Hex_1\#_3N_1$ as our usual hexagon picture with fixed points $a$, $b$, and $c$ as well as $3$ crosscaps. A copy of $EB$ containing $a$ and $b$ can be seen in red in the figure on the left. One can check that a tubular neighborhood of this $EB$ has three boundary components and thus must be isomorphic to $R_3$. The middle of Figure \ref{surg2} shows the result of removing this copy of $R_3$. To complete $-[R_3]$ surgery, we glue in the orange, pink, and green disks along the resulting boundary. To more easily see these identifications, we can first perform the intermediate step of ``flipping'' the red regions and identifying the yellow edges, then having the red change back to grey. The third picture on the right shows the result of the completed $-[R_3]$ surgery. The resulting space is isomorphic to $N_1[1]$. The original statement then follows from Lemma \ref{-[Rp] invariance}. 
\end{proof}

\begin{figure}
\begin{center}
\includegraphics[scale=.4]{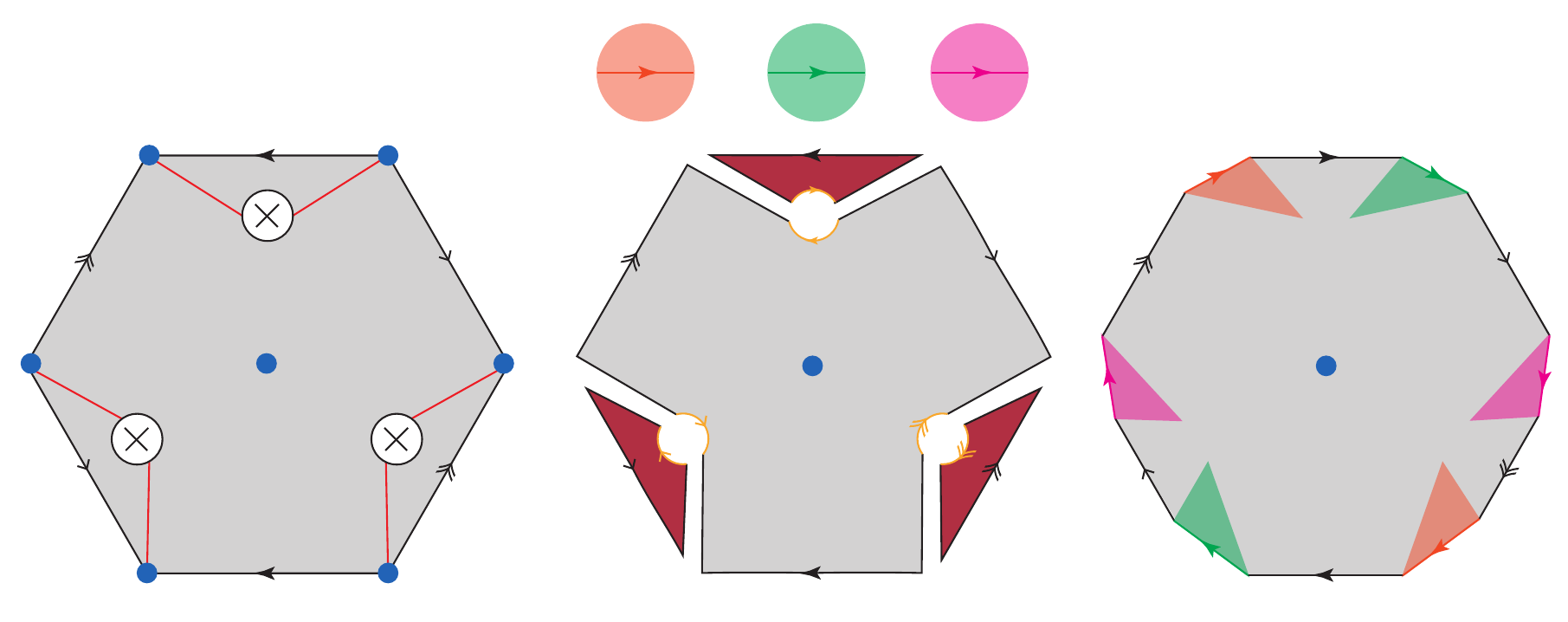}
\end{center}
\caption{\label{surg2} The procedure $\left(\Hex_1\#_p N_1\right)-[R_p]$.}
\end{figure}

\begin{lemma}\label{N2free+[AT]}
There is an equivariant isomorphism
\[N_2^{\text{free}}+[R_p]\cong S^{2,1}\#_pN_2\]
\end{lemma}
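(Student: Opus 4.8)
The plan is to reduce the claim to a statement about \emph{undoing} a ribbon and then to identify the resulting free surface using the classification of free $C_p$-actions from Section~\ref{freeproof}. Set $X = N_2^{\text{free}}+[R_p]$ and $Y = S^{2,1}\#_p N_2$. Since $+[R_p]$ is well defined by Corollary~\ref{independence of disk choice}, Proposition~\ref{-[Rp] invariance} applies, and it is enough to produce a copy of $R_p$ inside $Y$ for which $Y-[R_p]\cong N_2^{\text{free}}$: by construction $X-[R_p]=N_2^{\text{free}}$ (the ribbon removed is the one just sewn in), so $X-[R_p]\cong Y-[R_p]$ would force $X\cong Y$, which is the desired isomorphism. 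Thus the whole proof collapses to the single geometric assertion
\[
\left(S^{2,1}\#_p N_2\right)-[R_p]\cong N_2^{\text{free}},
\]
established for a suitable choice of ribbon.

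To identify $Z:=Y-[R_p]$, I would first record its invariants. The two fixed points of $Y$ are the poles of the underlying $S^{2,1}$; I would join them by an $EB_p$ and take $R_p$ to be its neighborhood, which by Lemma~\ref{tube} is either $R_p$ or $TR_p$, the former being confirmed by checking that the neighborhood has $p$ boundary components rather than one. Removing this ribbon absorbs both fixed points, so $Z$ is free, and the $\beta$-genus bookkeeping for $\#_p$ and $-[R_p]$ gives $\beta(Z)=2p-2(p-1)=2$. Non-orientability is automatic: $R_p$ is orientable, hence contains none of the crosscaps coming from the $N_2$ summands, so every crosscap of $Y$ survives in $Z$. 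A closed, connected, non-orientable free $C_p$-surface with $\beta=2$ is supported on the Klein bottle, and by Propositions~\ref{freestuff} and~\ref{kbottle} the only such action, up to $\operatorname{Aut}(C_p)$, is $N_2^{\text{free}}$. This yields $Z\cong N_2^{\text{free}}$ and, through the reduction above, the lemma.

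The main obstacle is showing that $Z$ is connected. Because $F(Y)=2$, Lemma~\ref{connectedness} does not apply, and the issue is genuine: the naive $EB_p$ consisting of the $p$ meridian arcs of $S^{2,1}$ has for its neighborhood the \emph{standard} $R_p$, the one whose $p$ holes are filled by the $p$ copies of $N_2\setminus D^2$, and removing it leaves the disconnected space $N_2\times C_p$. The delicate step is therefore to route the conjugate arcs of $EB_p$ so that they thread through the attached Klein-bottle handles while keeping a single boundary system, so that $Y\setminus R_p$ remains connected. I would verify this directly by tracing the boundary of $Y\setminus R_p$ and confirming it does not separate the handles — equivalently, by drawing the explicit gluing diagram exactly as in the proof of Lemma~\ref{N1[1]+[AT]}, doing the $p=3$ case in detail and noting that $p>3$ is analogous. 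Once the correct ribbon is fixed, the remainder is the routine invariant computation together with the free classification already in hand.
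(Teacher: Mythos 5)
Your proposal is correct and takes essentially the same route as the paper: the paper's proof likewise exhibits a specific copy of $EB$ in $S^{2,1}\#_pN_2$ (its Figure~\ref{surg3}) whose ribbon neighborhood has connected, non-orientable, free complement, identifies that complement as $N_2^{\text{free}}$ via the $\beta$-genus count and the free classification, and concludes by Proposition~\ref{-[Rp] invariance}. The connectivity subtlety you flag (the naive meridian $EB_p$ yields the disconnected $N_2\times C_p$) is exactly the content the paper encodes in its choice of figure, verified there just as you propose to verify it.
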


\begin{proof}
If we perform $-[R_p]$ surgery on a neighborhood of the copy of $EB$ from $S^{2,1}\#_pN_2$ shown in Figure \ref{surg3}, the result is a connected, non-orientable surface with a free $C_p$-action. Since $-[R_p]$-surgery reduces $\beta$-genus by $2(p-1)$, this surface must have genus $\beta=2$. In particular, it must be $N_2^{\text{free}}$ by our classification of free $C_p$ spaces. It follows from Lemma \ref{-[Rp] invariance} that $N_2^{\text{free}}+[R_p]\cong S^{2,1}\#_pN_2$.
\end{proof}

\begin{figure}
\begin{center}
\includegraphics[scale=.4]{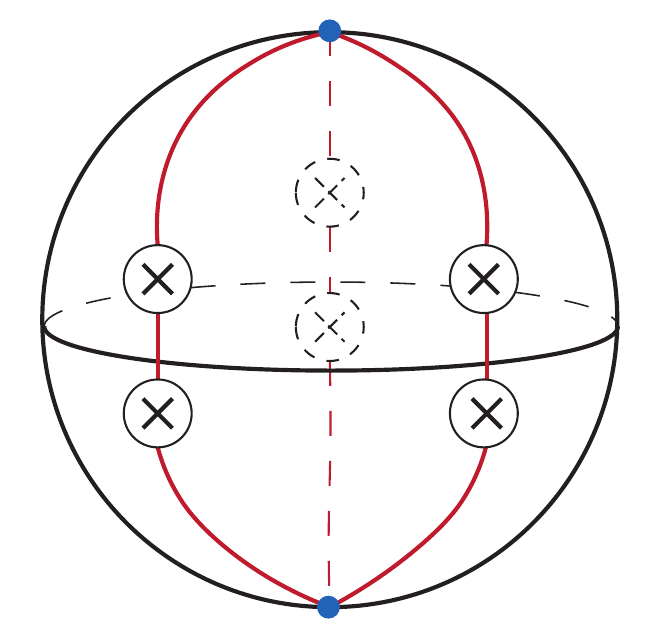}
\end{center}
\caption{\label{surg3} A copy of $EB$ whose neighborhood is $R_3$.}
\end{figure}

\begin{lemma}\label{N1[1]+[TAT]}
There is an equivariant isomorphism
\[N_1[1]+[TR_p]\cong S^{2,1}\#_pN_1\]
\end{lemma}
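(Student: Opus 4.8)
The plan is to mirror the proof of Lemma \ref{N2free+[AT]}: I will work with the right-hand side $S^{2,1}\#_pN_1$, locate a copy of $TR_{p,(i)}$ containing both of its fixed points, undo the twisted ribbon surgery, and identify the resulting space through the classification of free actions. As a first sanity check, $S^{2,1}\#_pN_1$ has $F=2$ and, by the connected-sum formula, $\beta=0+p\cdot 1=p$; undoing a twisted ribbon surgery lowers $F$ by $1$ and $\beta$ by $p-1$, landing on the invariants $F=1$, $\beta=1$ of $N_1[1]$, and both spaces are non-orientable. This agreement is what makes the statement plausible.

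First I would identify the two fixed points of $S^{2,1}\#_pN_1$ as the images of the north and south poles $N$ and $S$ of $S^{2,1}$, the connected-sum surgery having been performed on conjugate disks away from the poles. By Lemma \ref{tube} these two fixed points lie in a copy of $EB_p$ whose neighborhood is $R_{p,(i)}$ or $TR_{p,(i)}$. The content of this step is to exhibit, via a gluing diagram analogous to Figures \ref{surg2} and \ref{surg3}, a specific $EB_p$ joining $N$ and $S$ whose arcs are routed through the glued-in M\"obius bands so that its regular neighborhood is orientable with a \emph{single} boundary component, hence is $TR_{p,(i)}$ rather than $R_{p,(i)}$. Verifying that this neighborhood has exactly one boundary component is precisely the check prescribed in the proof of Lemma \ref{M2k+[TAT]}, and I expect this to be the main obstacle.

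Granting such a $TR_{p,(i)}\subset S^{2,1}\#_pN_1$, I would perform $-[TR_p]$ surgery on it. Since $TR_{p,(i)}$ has connected boundary, $\left(S^{2,1}\#_pN_1\right)\setminus TR_{p,(i)}$ is connected, so the result $W:=\left(S^{2,1}\#_pN_1\right)-[TR_p]$ is a closed, connected $C_p$-surface; by the stated effect of $-[TR_p]$ it has $F=1$ and $\beta=p-(p-1)=1$, and being of odd $\beta$-genus it is non-orientable. Removing the one remaining fixed disk from $W$ yields a free $C_p$-action on $\tilde{N}_1$, the M\"obius band, which by Proposition \ref{free with boundary} (the case $n=0$) is unique up to the action of $\operatorname{Aut}(C_p)$. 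Capping the fixed disk back off then gives $W\cong N_1[1]$.

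Finally I would reverse the surgery to conclude. Because $N_1[1]$ has a single fixed point, there is no choice of basepoint in forming $N_1[1]+[TR_p]$, and (as in Corollary \ref{independence of disk choice}, using that any two rotation-equivariant self-homeomorphisms of the boundary circle extend across $TR_{p,(i)}$) the surgery is independent of the gluing map. Thus re-gluing the copy of $TR_{p,(i)}$ removed above both recovers $S^{2,1}\#_pN_1$ and realizes it as $N_1[1]+[TR_p]$. This is the twisted-ribbon analogue of the step in Lemma \ref{N2free+[AT]} that invoked Proposition \ref{-[Rp] invariance}, and it yields the desired isomorphism $N_1[1]+[TR_p]\cong S^{2,1}\#_pN_1$.
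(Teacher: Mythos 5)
Your proof is correct, but it takes a genuinely different route from the paper's. The paper never performs $-[TR_p]$ surgery here: it rewrites $N_1[1]+[TR_p]$ as $\Hex_1+[FMB_p]$ by commuting the two surgeries (both spaces are obtained from $S^{2,1}$ by doing $+[TR_p]$ at one pole and $+[FMB_p]$ at the other), then shows by explicit pictures that $-[R_p]$ surgery on $\Hex_1+[FMB_p]$ and on $S^{2,1}\#_pN_1$ both produce $C_p\times N_1$ (Figure \ref{surg4}), and concludes by the cancellation result, Proposition \ref{-[Rp] invariance}. You instead exhibit a copy of $TR_p$ spanning the two fixed points of $S^{2,1}\#_pN_1$, undo it, pin down the result as $N_1[1]$ via Proposition \ref{free with boundary} with $n=0$, and re-glue. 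What the paper's route buys: it only ever needs to locate copies of $R_p$, which are easy to see (meridian arcs missing the crosscaps), and it reuses machinery already in place, at the price of the $[FMB_p]$ bookkeeping. What your route buys: no $[FMB_p]$ surgery and no appeal to Proposition \ref{-[Rp] invariance} --- indeed your re-gluing step is in effect a non-circular proof of $-[TR_p]$-cancellation in the special case where the smaller surface has a unique fixed point --- at the price of concentrating all the geometry in the step you flag as the main obstacle. That step does go through: routing each of the $p$ arcs once through a crosscap flips each band, and tracing the boundary of the resulting neighborhood of $EB_p$ shows that the $2p$ gap-arcs at the poles and the $2p$ band-edges close up into a single circle, so Lemma \ref{tube} forces the neighborhood to be $TR_{p,(i)}$; this is precisely the content of the paper's Figure \ref{n3rtat}, drawn there for $S^{2,1}\#_pN_r$ in the $F=2$ step of the classification proof, which does not depend on this lemma, so there is no circularity. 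Two facts you invoke deserve flagging: gluing-map independence of $+[TR_p]$ holds because every equivariant self-homeomorphism of the boundary circle is orientation-preserving (as $p$ is odd) and equivariantly isotopic to the identity, hence extends over a collar --- the paper uses this implicitly but never states it; and your count that $\pm[TR_p]$ changes $\beta$ by $p-1$ is the correct one, even though the paper's text at one point misstates the change under $+[TR_p]$ as $2(p-1)$.
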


\begin{proof}
The $C_p$-space $\Hex_1+[FMB_p]$ can be constructed in two ways. In addition to performing $+[FMB_p]$ surgery on $\Hex_1$, we could start by constructing $N_1[1]$ as $S^{2,1}+[FMB_p]$. We can then build $N_1[1]+[TR_p]$ by performing the $+[TR_p]$ surgery on the remaining fixed point. These two constructions are demonstrated in Figure \ref{N1[1]+[TR3]}. 

Since both of these constructions yield the same space, it follows that $N_1[1]+[TR_p]\cong \Hex_1+[FMB_p]$. If we next remove a copy of $R_p$ from $\Hex_1+[FMB_p]$ as shown in Figure \ref{surg4}, the result is $C_p\times MB$ where $MB$ denotes the M\"{o}bius band. Thus, when we finish the $-[R_p]$ surgery on $\Hex_1+[FMB_p]$ by gluing in $p$ disks on the boundary components, this leaves us with $C_p\times N_1$. Since $C_p\times N_1\cong \left(S^{2,1}\#_pN_1\right)-[R_p]$, we get that $\Hex_1+[FMB_p]\cong S^{2,1}\#_pN_1$ by Lemma \ref{-[Rp] invariance}.
\end{proof}

\begin{figure}
\begin{center}
\includegraphics[scale=.5]{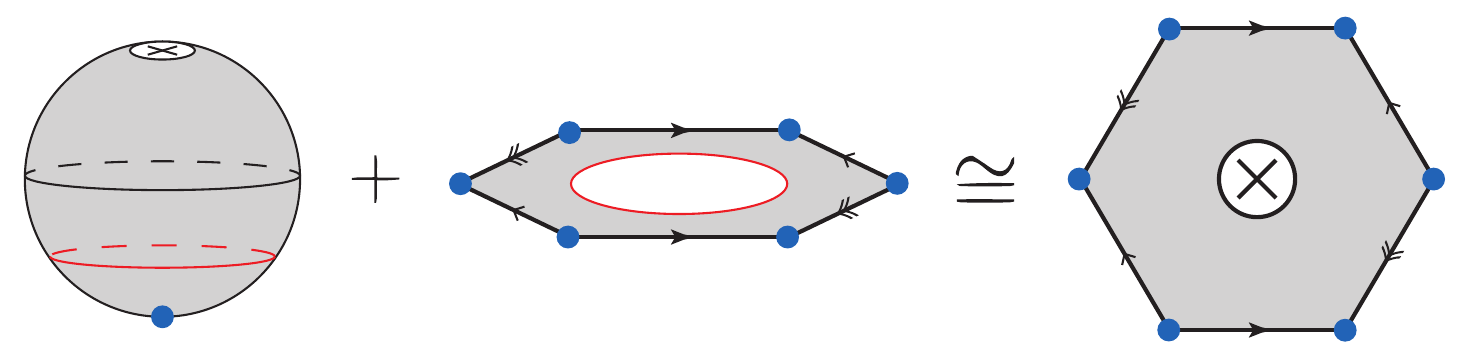}
\end{center}
\caption{\label{N1[1]+[TR3]} The equivariant surgery procedure $N_1[1]+[TR_3]$.}
\end{figure}

\begin{figure}
\begin{center}
\includegraphics[scale=.65]{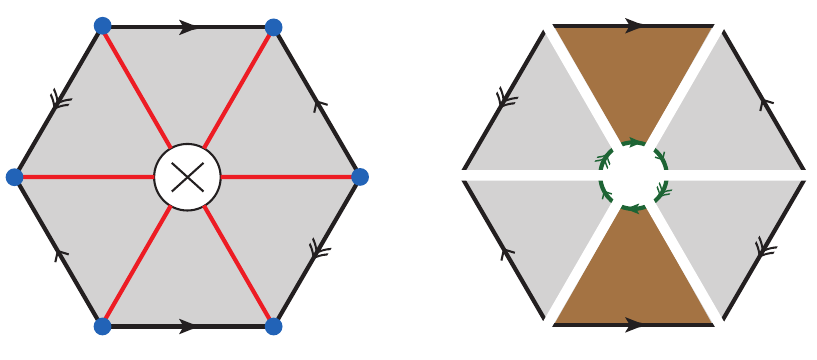}
\end{center}
\caption{\label{surg4} Removing $R_3$ from $\Hex_1$ results in $MB\times C_3$.}
\end{figure}

We are now ready to restate and prove Theorem \ref{nonfreenonorientable} for the classification of non-orientable $C_p$-surfaces.

\begin{theorem}
Let $X$ be a connected, closed, non-orientable surface with an action of $C_p$. Then $X$ can be constructed via one of the following surgery procedures, up to $\operatorname{Aut}(C_p)$ actions on each of the pieces.
\begin{enumerate}
	\item $N_{2+pr}^{\text{free}}\cong N_2^{\text{free}}\#_pN_r$, $r\geq 0$
	\item $N_{2(p-1)k+pr}[2k+2]\cong\left(S^{2,1}+k[R_p]\right)\#_pN_r$, $r\geq 1$
	\item $N_{1+2(p-1)k+pr}[1+2k]\cong\left(N_1[1]+k[R_p]\right)\#_pN_r$, $k,r\geq 0$
\end{enumerate}
Moreover, the space $X$ is determined by $F$ and $\beta$, with the condition that $F\equiv 2-\beta\pmod{p}$.
\end{theorem}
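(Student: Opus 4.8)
The plan is to separate the statement into an existence part (every such $X$ lies in one of the three families) and a uniqueness part (the ``moreover'' clause), with essentially all the work in existence, which I would prove by induction on the number of fixed points $F$. For uniqueness I would first record $(F,\beta)$ on each family: Family~1 has $F=0$ and $\beta=2+pr$; Family~2 has $F=2k+2$ and $\beta=2(p-1)k+pr$; and Family~3 has $F=1+2k$ and $\beta=1+2(p-1)k+pr$. The parity of $F$ separates Family~2 (even $F\geq 2$) from Family~3 (odd $F\geq 1$), while Family~1 is the unique $F=0$ case, and within each family the displayed equations recover $k$ and $r$ from $(F,\beta)$. Hence the assignment (family, parameters)$\mapsto(F,\beta)$ is injective, so once existence is established, $(F,\beta)$ is a complete invariant, the congruence $F\equiv 2-\beta\pmod p$ being automatic from Lemma~\ref{euler characteristic}.

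For the base cases: when $F=0$ the action is free, and the non-orientable free classification of Section~\ref{freeproof} gives $N_{2+pr}^{\text{free}}$ (Family~1) up to $\operatorname{Aut}(C_p)$. When $F=1$, I would delete a $\sigma$-fixed disk around the unique fixed point to obtain a free $C_p$-action on a non-orientable surface with a single boundary circle, which must be $\tilde{N}_{pn+1}$ for some $n$; by Proposition~\ref{free with boundary} this action is an $\operatorname{Aut}(C_p)$-conjugate of $MB_p\#_p N_n$, and regluing the fixed disk recovers $N_1[1]\#_p N_n$, i.e.\ Family~3 with $k=0$.

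For the inductive step, assume $F\geq 2$ and pick distinct fixed points $x,y$, which by Lemma~\ref{tube} lie in a copy of $R_p$ or of $TR_p$. In the $R_p$ case I would set $Z=X-[R_p]$; since $R_p$ is orientable all crosscaps survive, so $Z$ is non-orientable, and for $F\geq 3$ it is connected by Lemma~\ref{connectedness}. By induction $Z$ lies in the list, and I reconstruct $X$ cleanly with Proposition~\ref{-[Rp] invariance}: for each form of $Z$ I exhibit a list member $Y$ with $Y-[R_p]\cong Z$, forcing $X\cong Y$. Concretely a Family~2 (resp.\ Family~3) space with parameter $k$ arises from the analogous space with parameter $k+1$, and the exceptional $F=2$ subcase---where $Z$ is free, either $N_{2+pr}^{\text{free}}$ or a disconnected $C_p\times N_r$---is handled by Lemma~\ref{N2free+[AT]} and the splitting $\left(S^{2,1}\#_pN_r\right)-[R_p]\cong C_p\times N_r$, both yielding Family~2 with $k=0$.

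The $TR_p$ case is the crux and the expected main obstacle. Here $Z=X-[TR_p]$ is non-orientable and connected (its complement meets $\partial TR_p$ in a single circle) with $F-1$ fixed points, hence by induction lies in Family~2 or~3, and $X\cong Z+[TR_p]$ since $\pm[TR_p]$ are geometric inverses at the created fixed point. The difficulty is that, unlike $-[R_p]$, twisted ribbon surgery is \emph{not} a priori independent of the chosen fixed point---this is precisely the phenomenon generating the orientable $\Hex_n$-towers in Corollary~\ref{twoTR3isoclasses2}. The hard part will be showing that this ambiguity \emph{collapses} once a crosscap is present, so that the two candidate outcomes of $Z+[TR_p]$ are isomorphic; I would prove this using Lemma~\ref{N1[1]+[AT]} (the isomorphism $\Hex_1\#_p N_1\cong N_1[1]+[R_p]$) together with the commutation of $\#_p$ with $\pm[R_p]$ and $\pm[TR_p]$, which rewrites any $\Hex$-tower summed with $N_r$ as a Family~2 or~3 space. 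Granting independence, I compute $Z+[TR_p]$ at a convenient fixed point: when $Z$ is Family~3 I apply Lemma~\ref{N1[1]+[TAT]} to land in Family~2 (sending $r\mapsto r+1$), and when $Z$ is Family~2 I run the chain $\left(S^{2,1}+k[R_p]\right)\#_p N_r+[TR_p]\cong\left(\Hex_1+k[R_p]\right)\#_p N_r\cong\left(N_1[1]+(k+1)[R_p]\right)\#_p N_{r-1}$ to land in Family~3; the $F=2$ subcase (where $Z$ has a single fixed point, so no ambiguity) follows directly from Lemma~\ref{N1[1]+[TAT]}. Combining both cases closes the induction, and together with the injectivity of the first paragraph this proves the classification and the completeness of $(F,\beta)$.
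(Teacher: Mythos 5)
Your skeleton matches the paper's exactly: induction on $F$, the free and $F=1$ base cases via Proposition \ref{free with boundary}, the $R_p$/$TR_p$ dichotomy from Lemma \ref{tube}, reconstruction in the $R_p$ branch via Lemma \ref{connectedness} and Proposition \ref{-[Rp] invariance}, and evaluation of $Z+[TR_p]$ at a convenient fixed point using Lemmas \ref{N1[1]+[AT]} and \ref{N1[1]+[TAT]}. The gap sits precisely at the step you yourself flag as the crux: your proposed mechanism for collapsing the ambiguity in $Z+_{?}[TR_p]$ --- Lemma \ref{N1[1]+[AT]} plus the commutation of $\#_p$ with $\pm[R_p]$ and $\pm[TR_p]$ --- is circular. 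Test it on the smallest case your induction generates, $\Hex_2\#_pN_1$. Since $\Hex_2=(\Hex_1+[R_p])+_b[TR_p]$ for a specific fixed point $b$ (the south pole of the added ribbon), commutation and Lemma \ref{N1[1]+[AT]} give
\[
\Hex_2\#_pN_1\cong\bigl((\Hex_1+[R_p])\#_pN_1\bigr)+_b[TR_p]\cong\bigl(N_1[1]+2[R_p]\bigr)+_b[TR_p],
\]
and you must now evaluate twisted ribbon surgery on the Family 3 space $N_1[1]+2[R_p]$ at the particular point $b$ --- which is exactly the independence statement you are trying to establish. Converting back to an orientable core with the same lemma returns $\bigl((\Hex_1+[R_p])+_b[TR_p]\bigr)\#_pN_1$, and Lemma \ref{twoTR3isoclases} says that for this $b$ the orientable factor is $\Hex_2$: you are back where you started. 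Note the rewriting \emph{cannot} succeed formally, because by Proposition \ref{hex2} we have $\Hex_2\not\cong S^{2,1}+2[R_p]$, so the desired isomorphism $\Hex_2\#_pN_1\cong(S^{2,1}+2[R_p])\#_pN_1$ genuinely consumes the crosscap; none of your cited tools interacts a crosscap with a $\Hex_2$-block. (Your Family 2 branch, by contrast, is fine: there all fixed points lie in the orientable core $S^{2,1}+k[R_p]$, so commutation plus the orientable Lemma \ref{M2k+[TAT]} does give independence.)

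What is needed, and what the paper supplies, is new geometric input carried as a second clause of the inductive hypothesis: for every non-orientable surface already classified at a lower level, $+_x[TR_p]$ is independent of $x$. The paper re-proves this clause at each stage by explicitly exhibiting, for an arbitrary fixed point $x$ and the distinguished fixed point $c$ of $N_1[1]$, a path $\alpha$ routed through the crosscaps (Figures \ref{n3rtat}, \ref{n5tat}, \ref{n9tat}) whose conjugate union $\alpha\cup\sigma\alpha\cup\cdots\cup\sigma^{p-1}\alpha$ has a neighborhood isomorphic to $TR_p$, and then invoking the boundary-fixing swap automorphism of Lemma \ref{hexmap} to exchange $x$ and $c$. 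An alternative repair in your spirit would be to write $N_1[1]+k[R_p]\cong(S^{2,1}+k[R_p])+[FMB_p]$ and commute the $[FMB_p]$ and $[TR_p]$ surgeries (they occur at disjoint fixed points), reducing to orientable independence; but that commutation statement is also not among your cited ingredients and would need to be proved. Without one of these arguments, the $TR_p$ branch of your induction does not close. Your remaining minor claims (e.g.\ that $X-[R_p]$ stays non-orientable ``since all crosscaps survive'') are true but need a different justification, such as: if $X-[R_p]$ were orientable then $X\cong(X-[R_p])+[R_p]$ would be orientable.
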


\begin{proof}
We induct on the number of fixed points $F$ of $X$. 

First let $X$ be a free non-orientable space. By the classification of free $C_p$-spaces, $X\cong N_{2+pr}^{\text{free}}$ for some $r\geq 0$.  

Let $X$ be a connected, closed, non-orientable $C_p$-surface with $F=1$. Then $X$ must have genus $pr+1$ for some $r\geq 0$ by Lemma \ref{euler characteristic}. Suppose $Y$ is another closed, connected, genus $pr+1$ non-orientable $C_p$-surface with a single fixed point. Let $\tilde{X}$ (respectively $\tilde{Y}$) denote the $C_p$-space $X\setminus D^{2,1}$ (respectively $Y\setminus D^{2,1}$) where $D^{2,1}$ is a neighborhood of the fixed point of $X$ (respectively $Y$). Recall that $\tilde{N}_{pr+1}$ has $(p-1)/2$ non-trivial, non-isomorphic $C_p$ actions up to isomorphism by Proposition \ref{free with boundary}. After altering the action on $Y$ by $\operatorname{Aut}(C_p)$, we can make the action on $\partial\tilde{Y}$ match that on $\partial\tilde{X}$. Then $\tilde{X}\cong \tilde{Y}$, which extends to an equivariant isomorphism $X\rightarrow Y$. Thus there is only one non-orientable $C_p$-surface of genus $pr+1$ with $F=1$, so it must be isomorphic to $N_1[1]\#_pN_r$. 

Suppose $F=2$. Let $x$ and $y$ be the two distinct fixed points of $X$. By Lemma \ref{tube}, there exists $R_p\subset X$ or $TR_p\subset X$ containing $x$ and $y$. If there exists $R_p\subset X$ containing $x$ and $y$, then $X-[R_p]$ is a free, non-orientable $C_p$-space. If $X-[R_p]$ is connected, then $X-[R_p]\cong N_{2+pr}^{\text{free}}$ for some $r\geq 0$. So $X\cong N_{2+pr}^{\text{free}}+[R_p]\cong S^{2,1}\#_pN_{r+2}$ by Lemma \ref{N2free+[AT]}. If $X-[R_p]$ is not connected, then it must be isomorphic to $N_{r^\prime}\times C_p$ for some $r^\prime\geq 1$. In this case, we can see that $X\cong S^{2,1}\#_pN_{r^\prime}$.  

Suppose instead we find that $x$ and $y$ are contained in some $TR_p\subset X$. Then $X-_{x,y}[TR_p]$ is a closed, connected, non-orientable $C_p$-surface with $1$ fixed point. In particular, $X-_{x,y}[TR_p]\cong N_1[1]\#_pN_r$ for some $r$ by what we already showed. Recall that equivariant connected sum surgery commutes with all types of $C_p$-ribbon surgeries. Since $X$ is the result of $+[TR_p]$-surgery on $N_1[1]\#_pN_r$, Lemma \ref{N1[1]+[TAT]} tells us that 
\[X\cong \left(N_1[1]+[TR_p]\right)\#_pN_r \cong \left(S^{2,1}\#_pN_1\right)\#_pN_r\cong S^{2,1}\#_pN_{r+1}.\]

We next claim that for a closed, non-orientable $C_p$-surface with $F=2$, there exists a path $\alpha$ between the two fixed points so that a neighborhood of $\alpha\cup\sigma \alpha\cup\cdots \cup \sigma^{p-1}\alpha$ is isomorphic to $TR_p$. We just showed that $X\cong S^{2,1}\#_pN_r$ for some $r\geq 1$, so we can represent $X$ by a copy of $S^{2,1}$ with $pr$ crosscaps at the equator. Figure \ref{n3rtat} shows a path $\alpha$ on $X$ with the desired property in the case when $r=2$ and $p=3$. By Lemma \ref{hexmap}, there exists an automorphism of $X$ swapping its fixed points. As in previous cases, this allows us to conclude that $+[TR_p]$ surgery on $X$ is independent of the chosen fixed point. 

\begin{figure}
\begin{center}
\includegraphics[scale=.3]{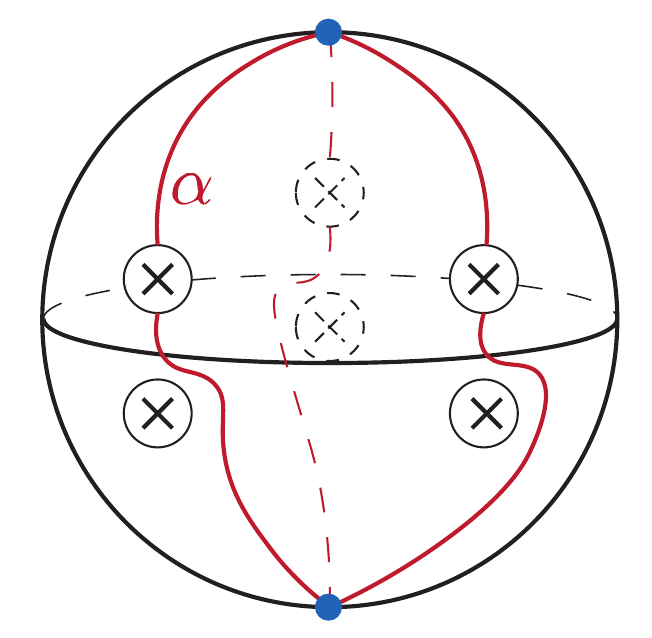}
\end{center}
\caption{\label{n3rtat} A choice of $\alpha$ whose conjugates have a neighborhood isomorphic to $TR_3$.}
\end{figure}

For the inductive hypothesis, let $2<\ell$. For any $\ell^\prime$ with $2\leq \ell^\prime<\ell$, suppose that (1) if $A$ is a connected, closed, non-orientable $C_p$-surface with $F=\ell^\prime$, then $Z$ is isomorphic to $N_{2(p-1)k+pr}[2k+2]$ or $N_{1+2(p-1)k+pr}[1+2k]$ for some $k,r$, and (2) if $x$ and $y$ in $Z$ are distinct fixed points, then $Z+_x[TR_p]\cong Z+_y[TR_p]$.  Now let $X$ be a closed, non-orientable $C_p$-surface with $F=\ell$. Let $x,y\in X$ be distinct fixed points. By Lemma \ref{tube}, there exists $R_p\subset X$ or $TR_p\subset X$ containing $x$ and $y$. 

Suppose first that $x$ and $y$ are contained in $R_p\subset X$. Then $X-[R_p]$ has $\ell-2\geq 1$ fixed points and is thus connected by Lemma \ref{connectedness}. By the inductive hypothesis, $X-[R_p]$ is isomorphic to one of the following:
\begin{enumerate}
\item $N_{2(p-1)k+pr}[2k+2]\cong\left(S^{2,1}+k[R_p]\right)\#_pN_r$
\item $N_{1+2(p-1)k+pr}[1+2k]\cong\left(N_1[1]+k[R_p]\right)\#_pN_r$.
\end{enumerate}
In the first case, we can conclude 
\[X\cong \left(S^{2,1}+(k+1)[R_p]\right)\#_pN_r\cong N_{2(p-1)(k+1)+pr}[2(k+1)+2].\]
In the second case, we have
\[X\cong \left(N_1[1]+(k+1)[R_p]\right)\#_pN_r\cong N_{1+2(p-1)(k+1)+pr}[1+2(k+1)].\] 

If $x$ and $y$ are contained in $TR_p\subset X$, then $X-[TR_p]$ has $\ell-1\geq 2$ fixed points. By the inductive hypothesis, $X-[TR_p]$ is isomorphic to one of the following:
\begin{enumerate}
\item $N_{2(p-1)k+pr}[2k+2]\cong\left(S^{2,1}+k[R_p]\right)\#_pN_r$ ($r\geq 1$)
\item $N_{1+2(p-1)k+pr}[1+2k]\cong\left(N_1[1]+k[R_p]\right)\#_pN_r$.
\end{enumerate}
We also know from the inductive assumption that $+[TR_p]$-surgery on $X-[TR_p]$ is independent of the chosen fixed point, so $\left(X-[TR_p]\right)+[TR_p]\cong X$. Thus in the first case, we can choose to center our $+[TR_p]$ surgery on the north pole of $S^{2,1}$. Since $r\geq 1$, we have
\begin{align*}
X &\cong \left(\left(\Hex_1\#_pN_1\right)+k[R_p]\right)\#_pN_{r-1} \\
&\cong \left(N_1[1]+(k+1)[R_p]\right)\#_pN_{r-1} \\
&\cong N_{1+2(p-1)(k+1)+p(r-1)}[1+2(k+1)]
\end{align*}
where the second isomorphism is by Lemma \ref{N1[1]+[AT]} and the first isomorphism follows from the commutativity of $+[R_p]$-surgery and equivariant connected sum surgery. In the second case, we can choose to center our $+[TR_p]$ surgery on the fixed point originating from the copy of $N_1[1]$. By Lemma \ref{N1[1]+[TAT]}, we get 
\[X\cong \left(S^{2,1}+k[R_p]\right)\#_pN_{r+1}\cong N_{2(p-1)k+p(r+1)}[2k+2].\] 

Next we will show that if $x$ and $y$ are distinct fixed points in $X$, then $X+_x[TR_p]\cong X+_y[TR_p]$. The case where $X\cong N_{2(p-1)k+pr}[2k+2]$ is nearly identical to the orientable case $\Sph_{(p-1)k+pg}[2k+2]$, so we will provide the proof of $+[TR_p]$ invariance only for $X\cong N_{1+2(p-1)k+pr}[1+2k]$. 

We represent $N_{1+2(p-1)k+pr}[1+2k]$ by first choosing a disk $D$ in $N_1[1]$ that does not intersect its conjugates. Then choose a representation of $N_{2(p-1)(k-1)+pr}[2(k-1)+2]$ using the same construction as for $\Sph_{(p-1)k+pg}[2k+2]$ in Lemma \ref{M2k+[TAT]}. Next remove $p$ disjoint conjugate disks $D^\prime ,\sigma D^\prime, \dots ,\sigma^{p-1}D^\prime$ from the equator of the sphere $S^{2,1}$ used to construct $N_{2(p-1)(k-1)+pr}[2(k-1)+2]$. Remove $D$ and its conjugates from $N_1[1]$ and identify $\partial \sigma^i D$ with $\partial \sigma^i D^\prime$ (renaming $D^\prime$ if necessary). Let $c$ denote the fixed point in $N_1[1]$. We will show that for any other fixed point $x$ there exists an equivariant automorphism of $N_{1+2(p-1)k+pr}[1+2k]$ which exchanges $x$ and $c$. If we can show this, then composition of these automorphisms allows us to swap any two fixed points in $N_{2(p-1)(k-1)+pr}[2(k-1)+2]$. 

Let $x\neq c$ be a fixed point in $N_{1+2(p-1)k+pr}[1+2k]$. Then $x$ is either contained in the copy of $S^{2,1}$ or $(R_p)_i$ for some $i$. In any case, there exists a path $\alpha$ from $x$ to $c$ with a neighborhood of $\alpha\cup\sigma \alpha\cup \cdots \cup\sigma^{p-1}\alpha$ isomorphic to $TR_p$. Figure \ref{n5tat} shows how to construct such a path $\alpha$ when $x\in S^{2,1}$. Note that this figure does not show the $(R_p)_i$, but $\alpha$ can be constructed so that it does not intersect any of the $(R_p)_i$. Similarly, Figure \ref{n9tat} shows how to construct $\alpha$ when $x\in (R_p)_i$ for some $i$. The choice of $\alpha$ is similar for all $i$. Again note that $\alpha$ can be constructed so that it does not intersect $(R_p)_j$ when $j\neq i$. One can check that the paths depicted in these figures have a neighborhood isomorphic to $TR_p$ by checking that the chosen neighborhood has a single boundary component. Since $x$ and $c$ are contained in a copy of $TR_p\subset N_{1+2(p-1)k+pr}[1+2k]$, we know from Lemma \ref{hexmap} that there exists an automorphism of $N_{1+2(p-1)k+pr}[1+2k]$ swapping $x$ and $c$. The result then follows from induction.

\begin{figure}
\begin{minipage}{0.4\textwidth}
\begin{center}
\includegraphics[scale=.3]{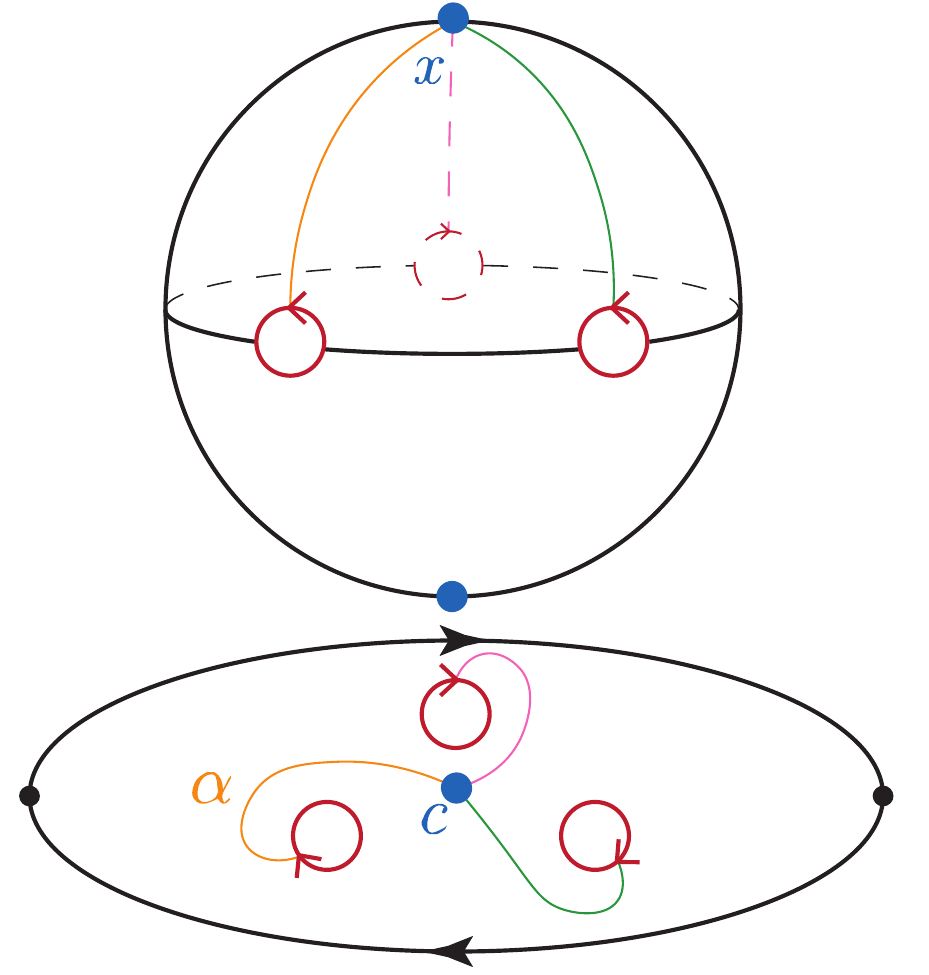}
\end{center}
\end{minipage} \ \begin{minipage}{0.4\textwidth}
\begin{center}
\includegraphics[scale=.3]{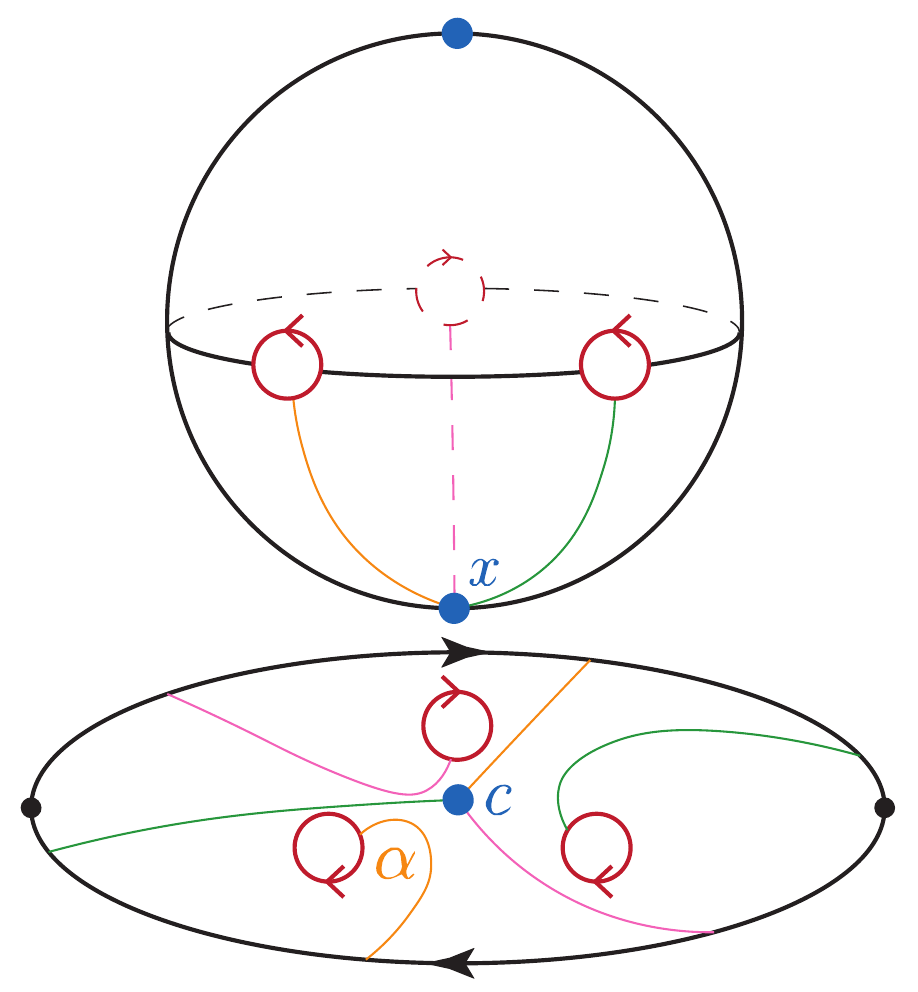}
\end{center}
\end{minipage}
\caption{\label{n5tat} Choices of $\alpha$ whose conjugates have a neighborhood isomorphic to $TR_3$.}
\end{figure}

\begin{figure}
\begin{minipage}{0.45\textwidth}
\begin{center}
\includegraphics[scale=.3]{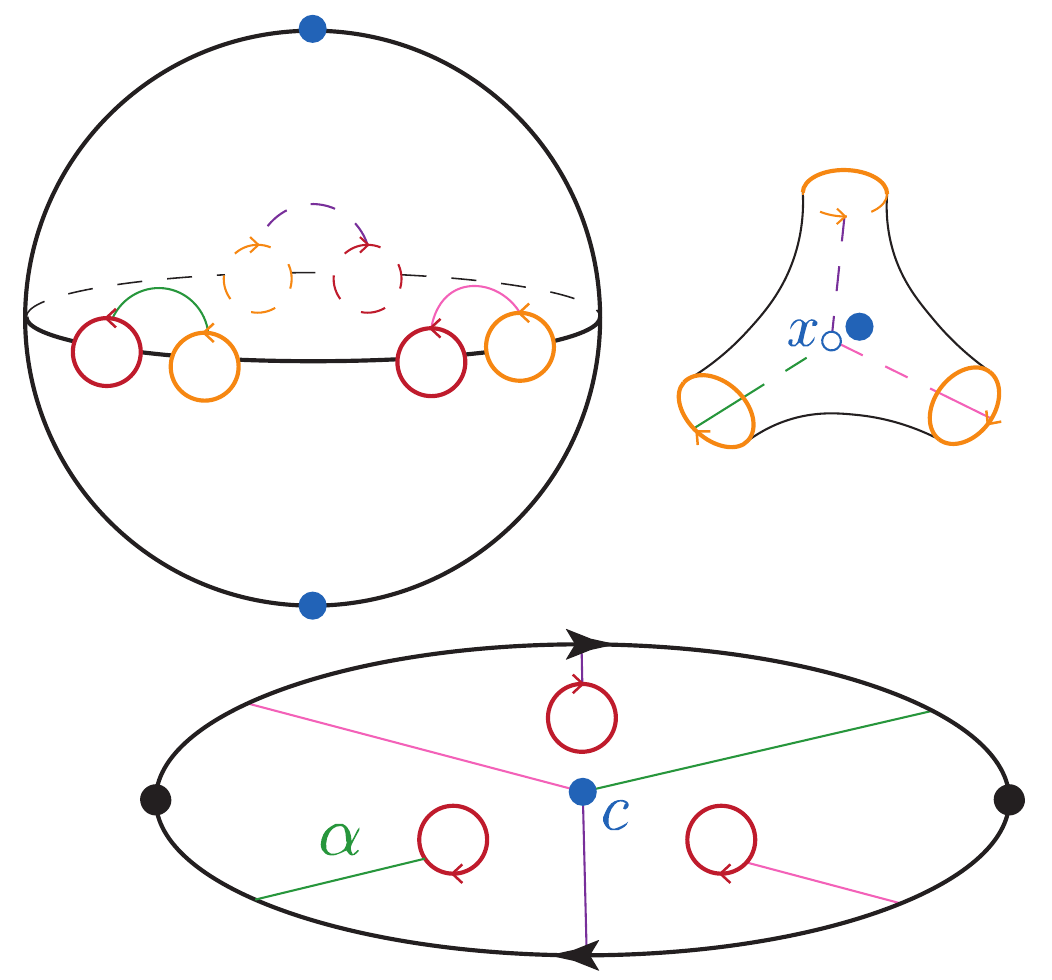}
\end{center}
\end{minipage}\ \begin{minipage}{0.45\textwidth}
\begin{center}
\includegraphics[scale=.3]{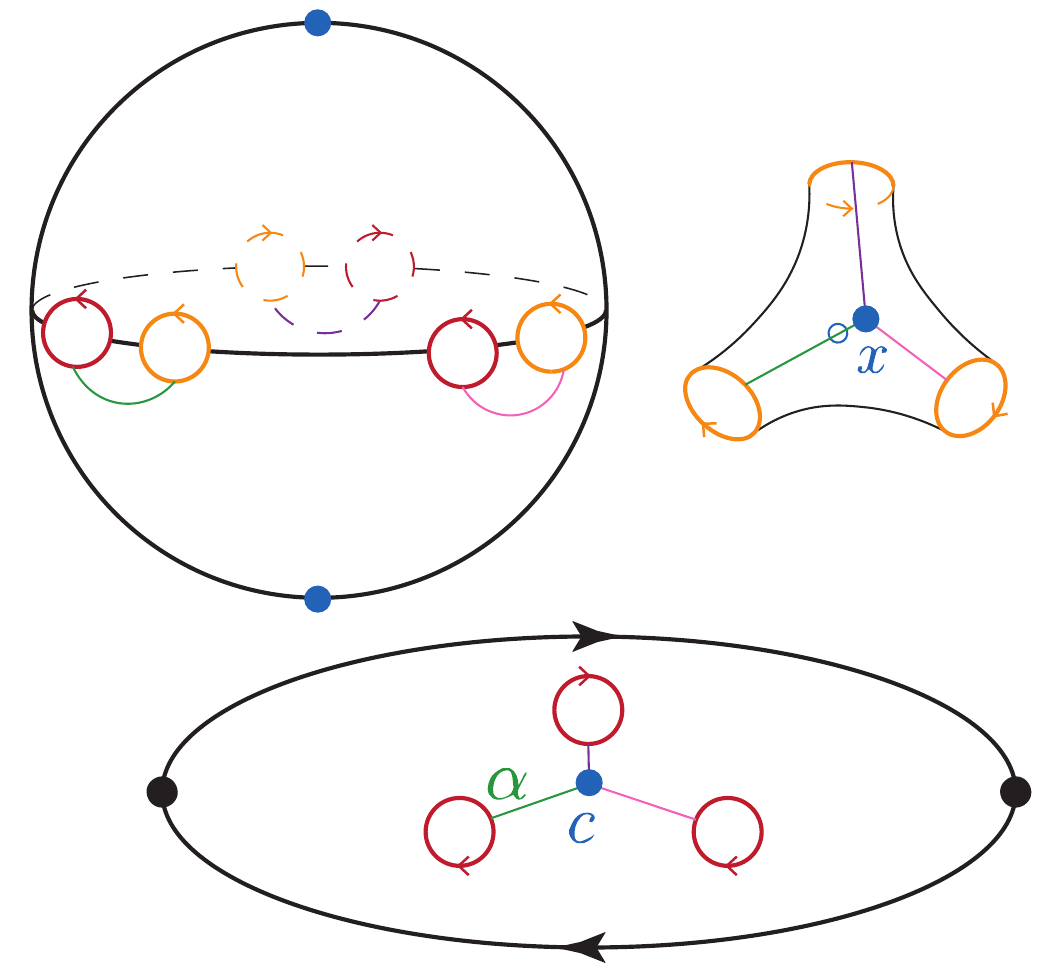}
\end{center}
\end{minipage}
\caption{\label{n9tat} Choices of $\alpha$ whose conjugates have a neighborhood isomorphic to $TR_3$.}
\end{figure}
\end{proof}

\begin{corollary}
If $X$ and $Y$ are closed, connected, non-orientable $C_p$-surfaces with $X-[TR_p]\cong Y-[TR_p]$, then $X\cong Y$. In particular, $X+_x[TR_p]$ is independent of the choice of $x$.
\end{corollary}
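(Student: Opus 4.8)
The plan is to derive both assertions from the non-orientable classification theorem just proved, which guarantees that a closed, connected, non-orientable $C_p$-surface is determined up to equivariant isomorphism by the pair $(F,\beta)$ subject to $F\equiv 2-\beta\pmod p$. The first assertion reduces to tracking how $-[TR_p]$-surgery changes $F$ and $\beta$, and the second follows from the first once we observe that $+_x[TR_p]$ and $-[TR_p]$ are inverse operations performed on the same copy of $TR_p$.

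First I would check that $X-[TR_p]$ and $Y-[TR_p]$ are again closed, connected, and non-orientable, so that the classification applies to them. Connectedness follows from an argument like that of Lemma \ref{connectedness}, made simpler by the fact that the removed copy of $TR_p$ has a single boundary circle $C$: if $X\setminus TR_p$ were disconnected, then since $C$ is connected it would lie entirely in one component, forcing the other component to be a closed surface disjoint from $TR_p$ and hence a separate component of the connected surface $X$, which is impossible. Non-orientability is preserved because $TR_p$ is an orientable subsurface: writing $X=(X\setminus TR_p)\cup_C TR_p$, were the complement $X\setminus TR_p$ orientable then $X$, being a union of two orientable surfaces along a circle, would itself be orientable; so $X\setminus TR_p$ is non-orientable, and capping its boundary circle with $D^{2,1}$ keeps it non-orientable. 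Since $-[TR_p]$-surgery lowers $F$ by $1$ and $\beta$ by $p-1$, the isomorphism $X-[TR_p]\cong Y-[TR_p]$ yields $F(X)-1=F(Y)-1$ and $\beta(X)-(p-1)=\beta(Y)-(p-1)$, whence $F(X)=F(Y)$ and $\beta(X)=\beta(Y)$. As $X$ and $Y$ are both closed, connected, and non-orientable, the classification theorem forces $X\cong Y$.

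For the final clause, fix two fixed points $x,y$ of $X$ and set $A=X+_x[TR_p]$ and $B=X+_y[TR_p]$. Both are closed, connected, and non-orientable, since attaching the orientable piece $TR_p$ along a disk about a fixed point cannot make the non-orientable surface $X$ orientable. Removing from $A$ the very copy of $TR_p$ that was just glued in undoes the surgery and returns $(X\setminus D_x)\cup D^{2,1}\cong X$, and the same holds for $B$. Hence $A-[TR_p]\cong X\cong B-[TR_p]$, and applying the first part of the corollary to $A$ and $B$ gives $A\cong B$, that is, $X+_x[TR_p]\cong X+_y[TR_p]$.

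The only delicate step is the surface-topology bookkeeping above, namely confirming that $X-[TR_p]$ remains connected and non-orientable so that the classification may legitimately be invoked. I do not anticipate circularity with the proof of the classification theorem: the independence of $+_x[TR_p]$ on the chosen fixed point was established there internally within the induction, and here we simply recover it as a clean consequence of the $(F,\beta)$-determination.
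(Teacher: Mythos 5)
Your proposal is correct and matches the paper's intended argument: the corollary is stated as an immediate consequence of the non-orientable classification theorem, whose ``Moreover'' clause says a closed, connected, non-orientable $C_p$-surface is determined by $(F,\beta)$, and you apply exactly this after tracking how $\pm[TR_p]$-surgery shifts $F$ by $1$ and $\beta$ by $p-1$. Your added verifications that $X-[TR_p]$ stays connected (automatic here since $TR_p$ has a single boundary circle) and non-orientable are details the paper leaves implicit, and they are argued correctly.
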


\bibliographystyle{amsalpha}
\bibliography{main}

\end{document}